\documentclass[11pt,twoside]{article} 

\usepackage{eqnarray,amsmath}
\usepackage[utf8]{inputenc} 
\usepackage[T1]{fontenc}    
\usepackage{booktabs}       
\usepackage{amsfonts}       
\usepackage{nicefrac}       
\usepackage{microtype}      
\usepackage{xcolor}         
\usepackage{subcaption}
\expandafter\def\csname 
ver@subfig.sty\endcsname{}
\usepackage{eqnarray,amsmath}
\usepackage{epsf}
\usepackage{epsfig}
\usepackage{fancyhdr}
\usepackage{graphics}
\usepackage{graphicx}
\usepackage{psfrag}
\usepackage{fullpage}
\usepackage{pdfpages}
\usepackage{amssymb}
\usepackage{natbib}

\usepackage{url}

\usepackage{color}

\usepackage{amsthm,amsmath,amsfonts,amssymb,mathrsfs,booktabs}
\usepackage[skip=0pt]{caption}  

\usepackage{textcomp}
\usepackage{siunitx}
\usepackage{wrapfig}
\usepackage{algorithm}

\usepackage{multirow}
\usepackage{multicol}
\usepackage{makecell}
\usepackage{colortbl} 
\usepackage{changepage}

\definecolor{customyellow}{HTML}{fedf8a}
\usepackage{eqnarray,amsmath}
\usepackage{epsf}
\usepackage{epsfig}
\usepackage{fancyhdr}
\usepackage{graphics}
\usepackage{graphicx}
\usepackage{psfrag}
\usepackage{fullpage}
\usepackage{pdfpages}
\usepackage{ragged2e}

\newtheorem{assumption}{Assumption}

\newtheorem{lemma}{Lemma}

\newtheorem{theorem}{Theorem}
\newtheorem{proposition}{Proposition}
\newtheorem{definition}{Definition}

\usepackage{eqnarray,amsmath}
\usepackage{booktabs}       
\usepackage{nicefrac}       
\renewcommand{\arraystretch}{1.35}

\usepackage{subfig}
\usepackage{epsf}
\usepackage{epsfig}
\usepackage{fancyhdr}
\usepackage{graphics}
\usepackage{graphicx}
\usepackage{epstopdf}
\usepackage{psfrag}
\usepackage{fullpage}
\usepackage{pdfpages}

\usepackage{enumerate}   
\usepackage{multirow}
\usepackage{bm}

\usepackage{mathtools}

\usepackage{url}
\usepackage[colorlinks,linkcolor=black,citecolor=blue, pagebackref=true]{hyperref}
\renewcommand*{\backrefalt}[4]{%
    \ifcase #1 \footnotesize{(Not cited.)}%
    \or        \footnotesize{(Cited on page~#2.)}%
    \else      \footnotesize{(Cited on pages~#2.)}%
    \fi}

\usepackage{color}

\usepackage{amsthm}
\usepackage{amsmath}
\usepackage{amssymb,bbm}
\usepackage{caption}
\usepackage{algorithmic}
\usepackage{algorithm}
\usepackage{textcomp}
\usepackage{siunitx}
\usepackage{wrapfig}
\usepackage{algorithmic}
\usepackage{algorithm}
\usepackage{multirow}
\usepackage{multicol}
\usepackage{mathtools}

\def\1{\bm{1}}

\DeclareMathAlphabet{\mathsfit}{\encodingdefault}{\sfdefault}{m}{sl}
\SetMathAlphabet{\mathsfit}{bold}{\encodingdefault}{\sfdefault}{bx}{n}

\usepackage{eqnarray,amsmath}

\usepackage{amsthm}
\usepackage{amsmath}
\usepackage{amssymb,bbm}

\allowdisplaybreaks

\begin{document}

\begin{center}

{\bf{\LARGE{Convergence Rates for Latent Mixing Measures in Infinite Homoscedastic Location-Scale Mixture Models}}}
  
\vspace*{.2in}
{\large{
\begin{tabular}{cccc}
Nicola Bariletto\footnotemark & Dung Le\footnotemark[1] & Alessandro Rinaldo & Nhat Ho
\end{tabular}
}}

\footnotetext{Equal contribution.}

\vspace*{.2in}

\begin{tabular}{c}
The University of Texas at Austin
\end{tabular}

\vspace*{.2in}
\today


\begin{abstract}
    We study posterior contraction rates for mixing measures in homoscedastic location-scale mixture models with infinitely many components. While posterior convergence at the level of densities is  well understood, ensuring convergence of the latent mixing measure is more challenging and has remained an open problem in settings where both location and scale parameters are unknown. We address this by deriving novel lower-bounds that connect the $L^1$ distance between mixture densities to discrepancies, based on the Wasserstein distances and the operator norm, between the underlying mixing measures and scale matrices. Our approach combines the dual formulation of the $W_1$ distance with functional-analytic approximation techniques. This leads to general inequalities, whose strength is determined (i) by the smoothness of the mixture kernel via the rate of decay of its characteristic function, and (ii) by a key lower-bound on the $L^1$ metric involving the operator norm discrepancy between scale parameters. Moreover, a novel PDE inversion condition yields a sharper inequality for important ordinary-smooth cases. We specialize these bounds to popular mixtures based on multivariate Gaussian, Cauchy, and Laplace kernels. As a consequence, we obtain first-of-their-kind contraction rates in the context of Dirichlet process mixtures with an unknown scale parameter shared across components. As a byproduct of our inequalities, we can distinguish the convergence behavior of the location mixing measure from that of the scale parameter across a range of kernel choices, leading to nuanced insights into their respective rates.
\end{abstract}
\end{center}


\section{Introduction}\label{sec:introduction}

Mixture modeling is a central tool in modern statistical analysis, due to its flexibility and the interpretability of the implied generative process. Concretely, one models data $X_1,\ldots,X_n\in\mathbb R^d$ as arising from a density of the form
\begin{equation*}
    p_G(x):=\int_{\Theta} f(x\mid \theta) G( d\theta),
\end{equation*}
where $G$, a probability measure on the parameter space $\Theta$, is known as the \emph{mixing measure}, while $\{f(\cdot\mid \theta) : \theta\in\Theta\}$ is a family of probability density kernels indexed by $\Theta$. The statistical problem, then, consists in estimating $G$ and the resulting density $p_G$. A popular methodological choice involves modeling $G$ as discrete, i.e., $G=\sum_{j=1}^k w_j \delta_{\theta_j}$ with $k\in\mathbb N\cup\{\infty\}$ and $(w_j)_{j=1}^k$ a sequence of positive weights summing to one. This formulation is appealing for two reasons. First, it is highly expressive: for instance, any density can be approximated in the $L^1$ sense by discrete mixtures with suitable kernels. Second, from a generative perspective, it induces a latent partition of the data into at most $k$ groups $j=1,\ldots,k$, according to which component $f(\cdot\mid\theta_j)$ generates each observation, which provides a natural basis for clustering.

Mixture models have received substantial attention in statistics and machine learning over the last decades, especially within the Bayesian community \citep{mclachlan2000finite, fraley2002model} following the introduction of Bayesian nonparametric models that place flexible priors, such as the Dirichlet process \citep{Ferguson}, on discrete mixing measures with $k=\infty$ \citep{ferguson1983bayesian,lo1984class,neal2000markov}. The appeal of the nonparametric approach is that it allows for flexible density estimation and clustering schemes, often making both the data partition and the underlying mixing measure the two primary objects of interest \citep{Nguyen-13,wade2023bayesian}. This flexibility has also led to computational methods for handling the resulting infinite-dimensional structure \citep{escobar1995bayesian,maceachern1998estimating,walker2007sampling}, as well as to extensive theoretical analysis. This article offers several new contributions to such line of work, which we now briefly review.

\subsection{Related literature}
Much of the existing theory has focused on the asymptotic behavior of Bayesian mixtures for density estimation. To introduce the problem, define the $L^1$ distance $\|p-p'\|_1:=\int_{\mathbb R^d}|p(x)-p'(x)|dx$ between densities $p$ and $p'$ with respect to the Lebesgue measure on $\mathbb R^d$, and suppose that $X_i\overset{\text{i.i.d.}}\sim p_{G_0}$ for some unknown mixing measure $G_0$. A natural question is whether, given a prior $\Pi$ supported on a suitable space of mixing measures $\mathscr G$ with $G_0\in\mathscr G$, the posterior distribution
\begin{equation*}
    \Pi(A\mid X_1,\ldots,X_n) := \frac{\int_A \prod_{i=1}^n p_G(X_i)\Pi( d G)}{\int_{\mathscr{G}} \prod_{i=1}^n p_{G'}(X_i)\Pi( d G')}
\end{equation*}
concentrates in $L^1$ neighborhoods of $p_{G_0}$ as $n\to\infty$. This question has been studied extensively, both for fixed but arbitrarily small neighborhoods \citep[leading to $L^1$ posterior consistency,][]{Ghosal-1999,Barron-Shervish-Wasserman-99,walker2001,walker2004squarerootsum,lijoi2005consistency,tokdar2006posterior,wu2010l1,chae2017,bariletto2025posterior,ghosal2017fundamentals}, and for shrinking neighborhoods, yielding $L^1$ posterior contraction rates $\omega_n$ satisfying
\begin{equation}\label{eq:L1_contraction_rate}
    \Pi\left(G : \|p_G - p_{G_0}\|_1\geq K\omega_n\mid X_1,\ldots,X_n\right) \to 0
\end{equation}
in $P_{G_0}^n$-probability for all large enough $K>0$ as $n\to\infty$; here, $P_G^n$ denotes the $n$-fold product measure induced by $P_G( dx):=p_G(x)dx$, while $(\omega_n)_{n\in\mathbb N}$ is a positive vanishing sequence. Investigating sufficient conditions for posterior behavior like in Equation~\eqref{eq:L1_contraction_rate}, as done by \citep{Ghosal-2000,genovese2000rates,shen2001rates,ghosal2001entropies,walker2007rates,ghosal2007noniid,Ghosal-2007,gine2011rates,shen2013adaptive,scricciolo2011posterior,canale2017posterior,ghosal2017fundamentals,castillo2024bnp}, provides useful information on the asymptotic rate at which the posterior concentrates around the ground truth $p_{G_0}$, and yields guidance on model choices that support efficient density estimation.

However, $L^1$ contraction rates do not fully characterize posterior behavior in terms of clustering and contraction to the true mixing measure, since the evolution of the mixture components is a more refined phenomenon than $L^1$ convergence of the density. Several works have addressed this problem, focusing for instance on the recovery of $k_0$, the true number of components \citep{miller2013simple,miller2014inconsistency,ascolani2023clustering}, and on the allocation of data to redundant components in overfitted mixture models \citep{rousseau2011asymptotic}. A prominent strand of literature, pioneered by \cite{Nguyen-13}, studies posterior contraction towards $G_0$ using the Wasserstein distances to measure discrepancy between mixing measures \citep[see also][]{guha2021Bernoulli,Ho-Nguyen-Ann-16,Ho-Nguyen-EJS-16,rousseau2024wasserstein}. Concretely, if $\rho$ is a metric on $\Theta$, the corresponding \emph{Wasserstein distance of order $r\in[1,\infty)$} is defined as
\begin{equation*}
    W_{r}(G,G'):=\left(\inf_{\pi\in\mathcal P(G,G')}\int_{\Theta\times \Theta}\rho(\theta,\theta')^r\pi( d\theta, d\theta')\right)^{1/r},
\end{equation*}
where $\mathcal P(G,G')$ is the set of probability measures $\pi$ on $\Theta\times \Theta$ with marginals $G$ and $G'$. Besides their solid grounding in the rich field of optimal transport \citep{Villani-09}, the appeal of the $W_r$ metrics in the study of mixture models lies in their ability to capture discrepancies between mixing measures that may be discrete and have disjoint supports, a feature not shared by many classical probability metrics. These features have motivated significant recent interest in optimal transport metrics within the Bayesian nonparametric community, for instance to measure dependence between random measures \citep{catalano2021measuring,catalano2024wasserstein,catalano2025measures}.

Convergence analyses for mixing measures with infinitely many atoms, such as those arising from a Dirichlet process prior, have been presented only for a restricted class of mixtures, namely \emph{location mixtures}, where $f(x\mid \theta)= g(x-\theta)$ for $\theta\in\Theta\subseteq\mathbb R^d$ and some density kernel $g$. Location mixtures have important applications, for instance in deconvolution problems with additive noise of known scale, as studied in a Bayesian nonparametric framework by \cite{Nguyen-13,Gao2016Posterior,scricciolo2018bayes,rousseau2024wasserstein}; see also \cite{Caillerie-etal-11,dedecker2013minimax,dedecker2015improved} for non-Bayesian analyses using Wasserstein distances. However, restricting variation to a location parameter severely limits flexibility in learning the underlying density and mixing structure, often motivating the adoption of more complex model designs.

\subsection{Contributions of the article}
A more flexible and  popular alternative to location mixtures are \emph{location-scale mixtures} associated to families of kernels of the form
\begin{equation*}
    f(x\mid\theta,\Sigma)=|\Sigma|^{-1/2} g\left((x-\theta)^\top\Sigma^{-1}(x-\theta)\right),
\end{equation*}
where $\theta\in\mathcal X\subseteq\mathbb R^d$ is a location parameter, $\Sigma\in\mathbb R^{d\times d}$ is a positive-definite symmetric scale matrix (with $|\Sigma|$ denoting its determinant), and $g$ is a typically symmetric kernel. Despite their prominence in applications, the posterior behavior of nonparametric mixing measures with a prior on both the location and scale parameters has, to the best of our knowledge, never been studied before. Our goal is to provide, for the first time, such an analysis for priors supported on mixing measures of the form
\begin{equation*}
    G(d\theta,dS) = P(d\theta)\times\delta_{\Sigma}(dS),
\end{equation*}
that is, \emph{homoscedastic location-scale mixtures} in which the matrix $\Sigma$ is shared across components and, unlike in previous work, estimated from data rather than fixed at the unknown value $\Sigma_0$. 
Concretely, we aim to establish posterior contraction rates of the form
\begin{equation}\label{eq:W1_contraction_rate}
    \Pi\left(G : \mathcal D(G, G_0)\geq K\varepsilon_n\mid X_1,\ldots,X_n\right) \to 0
\end{equation}
in $P_{G_0}^n$-probability for all large enough $K>0$ as $n\to\infty$, where $G_0=P_0\times\delta_{\Sigma_0}$ and $\mathcal D$ is a meaningful discrepancy between mixing measures $G$ and $G_0$. To achieve this, after verifying an $L^1$ contraction rate $\omega_n$ as in Equation~\eqref{eq:L1_contraction_rate}, a key step is to derive a lower-bound of the form $\|p_G-p_{G_0}\|_1\gtrsim \mathcal D(G,G_0)$.\footnote{Here and in the rest of the paper, $a(x) \lesssim b(x)$ and $a(x) \gtrsim b(x)$ denote inequalities up to a multiplicative constant, while $a(x) \asymp b(x)$ means $a(x) \lesssim b(x) \lesssim a(x)$. The variables $x$ that these constants are independent of will be clear from the context or explicitly spelled out case by case.} Following standard practice, we will design $\mathcal D$ to incorporate $W_1$ as a discrepancy between $P$ and $P_0$ (using the Euclidean ground metric on $x,y\in\mathcal X$) and the operator norm
$$\|S\|_{\mathrm{op}}:=\max_{v\in\mathbb R^d\,:\,\|v\|\leq 1}|v^\top S v|$$
to measure the distance between $\Sigma$ and $\Sigma_0$.

As mentioned above, virtually all existing work is limited to location mixtures only. In our notation, this is equivalent to fixing $\Sigma=\Sigma_0$---hence assuming it known---and restricting attention to the location mixing measure $P$, so that $\mathcal D(G,G_0)\equiv W_1(P,P_0)$ and the inequality one seeks is $\|p_G-p_{G_0}\|_1\gtrsim \varphi(W_1(P,P_0))$ for some increasing function $\varphi:[0,\infty)\to[0,\infty)$. Existing approaches, pioneered by \cite{Nguyen-13}, establish this kind of inequality by first convolving $P$ and $P_0$ with a suitable smoothing kernel $K_\delta$, and then exploiting the triangle inequality
\begin{equation}\label{eq:long_convolution}
    W_1(P,P_0) \leq W_1(P,P*K_\delta) + W_1(P*K_\delta, P_0*K_\delta) + W_1(P_0*K_\delta, P_0)
\end{equation}
to bound $W_1(P,P_0)$ by handling each term separately \citep{Nguyen-13,Gao2016Posterior,scricciolo2018bayes,rousseau2024wasserstein}.\footnote{Here $*$ denotes the convolution operator, that is $P*K:=\int_\mathcal X K(x-y)P(dy)$.} While this strategy represents the state of the art in the location mixture setting, it fails in the more general location-scale case, as the notion of convolution has no obvious extension to bound discrepancies of the form $W_1(P\times\delta_{\Sigma},P_0\times \delta_{\Sigma_0})$ where different scales $\Sigma$ and $\Sigma_0$ are involved.

To fill this gap, our first contribution is to develop new proof techniques that approach the problem from a functional analytic perspective. Our starting point is the observation that, for many location-scale mixtures of interest, an inequality of the form
\begin{equation}\label{eq:L1_Sigma_inequality}
    \|p_{G} - p_{G_0}\|_1\gtrsim \Psi(\|\Sigma-\Sigma_0\|_{\mathrm{op}}),
\end{equation}
for some strictly increasing function $\Psi:[0,\infty)\to[0,\infty)$, is available under mild parameter boundedness conditions. Moreover, the well-known Kantorovich--Rubinstein duality theorem \citep{Villani-09} allows to rewrite
\begin{equation*}
    W_1(P,P_0)=\sup_{h\in\mathrm{Lip}_1(\mathcal X)}\int_{\mathcal X} h(\theta)\big(P( d\theta) - P_0( d\theta)\big),
\end{equation*}
where $\mathrm{Lip}_1(\mathcal X):=\{h\in\mathbb R^\mathcal X \,:\, \forall \theta,\theta'\in\mathcal X,\; |h(\theta)-h(\theta')|\leq \|\theta-\theta'\|\}$ and the supremum is attained at some $h^*\in\mathrm{Lip}_1(\mathcal X)$. Our strategy, then, is to develop a functional approximation of $h^*$ that enables the use of Fourier transforms and the inversion theorem to analyze $W_1(P,P_0)$. In fact, since $\mathcal X$ will be assumed to be a compact subset of $\mathbb R^d$, leveraging Fourier inversion requires to first extend $h^*$ to a function $\widetilde h_{\mathcal X}^*$ defined on the whole $\mathbb R^d$, and then to approximate $\widetilde h_{\mathcal X}^*$ with another function $\widetilde h_{\mathcal X,\Lambda}^*$, where $\Lambda>0$ controls the approximation accuracy, that guarantees sufficient regularity. This constitutes a key technical step in our analysis.

With these approximations in place, we rewrite
\begin{align*}
    W_1(P,P_0) & = \int_{\mathbb R^d} \widetilde h_{\mathcal X,\Lambda}^* (\theta)\big(P( d\theta) - P_0( d\theta)\big)\\
    & \quad + \int_{\mathbb R^d}  \left(\widetilde h_{\mathcal X}^* (\theta)-\widetilde h_{\mathcal X,\Lambda}^* (\theta)\right)\big(P( d\theta) - P_0( d\theta)\big).
\end{align*}
The rest of our argument then proceeds in two steps:
\begin{enumerate}
    \item we show that the difference $\int_{\mathbb R^d}  \big(\widetilde h_{\mathcal X}^* (\theta)-\widetilde h_{\mathcal X,\Lambda}^* (\theta)\big)\big(P( d\theta) - P_0( d\theta)\big)$ represents a controlled approximation error of order $\Lambda^{-1}$, then
    
    \item we bound the integral $\int_{\mathbb R^d} \widetilde h_{\mathcal X,\Lambda}^* (\theta)\big(P( d\theta) - P_0( d\theta)\big)$, which is amenable to a Fourier-analytic treatment owing to the regularity of $\widetilde h_{\mathcal X,\Lambda}^*$. In particular, a careful application of the Fourier inversion theorem, together with Equation~\eqref{eq:L1_Sigma_inequality}, yields a general inequality of the form
\begin{equation}\label{eq:general_inequality}
    \|p_G-p_{G_0}\|_1 \gtrsim \varphi(W_1(P,P_0)) + \Psi(\| \Sigma - \Sigma_0\|_{\text{op}}),
\end{equation}
for a certain increasing function $\varphi$.
\end{enumerate}

While the specific form of the function $\Psi$ is a direct consequence of the requirement in Equation~\eqref{eq:L1_Sigma_inequality}, $\varphi$ can be characterized in terms of the properties of the density kernel $f$. In particular, $\varphi$ is related to the smoothness of $f$ measured by the rate of decay of its characteristic function. Our findings on this point, which we present in Section~\ref{sec:general_inequalities} and summarize in Table~\ref{table1}, are as follows:
\begin{enumerate}
    \item[(i)] In the \emph{super-smooth case of order $\alpha>0$}, that is, when the characteristic function $\Phi_\Sigma(\xi)$ of $f(\cdot\mid0,\Sigma)$ satisfies $\log \Phi_\Sigma(\xi)\asymp -\|\xi\|^\alpha$, we obtain
    $$\varphi(t)=\min\{\exp(-C/t^\alpha),(\Psi\circ \Xi^{-1})(\exp(-C/t^\alpha))\}$$
    for some constant $C>0$, where $\Xi:[0,\infty)\to[0,\infty)$ is another function (introduced later) determined by $f$. This result is dimension-free and applies to important kernel families such as the Gaussian and Cauchy ones.
    
    \item[(ii)] In the \emph{ordinary-smooth case of order $\beta>0$}, where $\log \Phi_\Sigma(\xi)\asymp -\log(1+\|\xi\|^\beta)$, we get that
    $$\varphi(t)=\min\{t^{d+\beta+1},(\Psi\circ \Xi^{-1})(C t^{d+p+1})\},$$
    for some constant $C>0$, where $p\geq 1$ is determined by $f$ and introduced later. In this case, the dependence on $d$ and $p$ reflects the mild regularity imposed by the ordinary-smoothness assumption and may be suboptimal for certain kernels such as the Laplace one, motivating the next result.
    
    \item[(iii)] In the presence of a \emph{partial differential equation (PDE) inversion condition of order $\beta\in\mathbb N$} (involving the existence of an appropriate differential operator $\mathcal{T}_\Sigma$ such that $\mathcal{T}_\Sigma p_G=P$ for all $G=P\times\delta_\Sigma\in\mathscr G$), we obtain that
    $$\varphi(t)=\min\{t^{\beta},\Psi(C t^{\beta})\},$$
    for some constant $C>0$. This novel PDE-based condition is notable because (a) it implies that the kernel $f$ is ordinary-smooth of order $\beta$ provided that $\beta$ is even (Proposition~\ref{pro:PDE_implies_ordinary}), and (b) it yields a dimension-free inequality, replacing the $\beta+d+1$ and $\beta+p+1$ exponents of scenario (ii) with the more favorable exponent $\beta$. We exploit this condition explicitly in the analysis of Laplace mixtures, while noting that it also applies to other important mixture classes, such as those arising from shifted exponential and fixed-shape Gamma kernels.
\end{enumerate}

\begin{table}
\caption{Lower-bounds on the $L^1$ distance under different kernel smoothness settings. See Assumption~\ref{ass2} for a definition of $\Xi$ and $p$.}
\label{table1}
\centering
\renewcommand{\arraystretch}{2} 
\begin{tabular}{@{}l@{\hspace{3em}}l@{}}
\toprule
\textbf{Kernel smoothness} & $\big\|p_G-p_{G_0}\big\|_1\gtrsim$\\
\toprule
\begin{tabular}{@{}l@{}}Super-smooth ($\alpha>0$) \\[-1.5ex] (Theorem~\ref{theorem:supersmooth_rate})\end{tabular} & 
$\begin{aligned}
  &\min\big\{\exp(-C/W_1(P,P_0)^\alpha), (\Psi\circ \Xi^{-1})(\exp(-C/W_1(P,P_0)^\alpha))\big\} \\
  &\quad + \Psi(\| \Sigma - \Sigma_0\|_{\text{op}})
\end{aligned}$ \\
\midrule

\begin{tabular}{@{}l@{}}Ordinary-smooth ($\beta>0$) \\[-1.5ex] (Theorem~\ref{theorem:ordinary_rate})\end{tabular} & 
$\begin{aligned}
  &\min\big\{W_1(P,P_0)^{d+\beta+1}, (\Psi\circ \Xi^{-1})(C W_1(P,P_0)^{d+p+1})\big\} \\
  &\quad + \Psi(\| \Sigma - \Sigma_0\|_{\text{op}})
\end{aligned}$ \\
\midrule

\begin{tabular}{@{}l@{}} PDE inversion ($\beta\in\mathbb N$) \\ [-1.5ex] (Theorem~\ref{theorem:sharpen_ordinary_rate})\end{tabular} & 
$\begin{aligned}
  &\min\big\{W_1(P,P_0)^{\beta}, \Psi(C W_1(P,P_0)^{\beta})\big\} \\
  &\quad + \Psi(\| \Sigma - \Sigma_0\|_{\text{op}})
\end{aligned}$ \\
\bottomrule
\end{tabular}
\end{table}

The different forms of smoothness indicated above are verified in important and popular choices of the kernel; in Section~\ref{sec:examples}, we verify these conditions for the Gaussian, Cauchy, and Laplace cases, with the resulting bounds summarized in Table~\ref{table2}. We also show that, in the Gaussian mixture setting, a sharper inequality is obtained with an isotropic scale  $\Sigma=\sigma^2I_d$ (where $I_d$ denotes the $d\times d$ identity matrix), deriving from the unique semi-group structure of Gaussian kernel families. In turn, when combined with $L^1$ contraction results of the form in Equation~\eqref{eq:L1_contraction_rate}, these bounds lead to posterior contraction rates for mixing measures and scale parameters in the corresponding infinite mixture models. A key feature of our approach is that, once an $L^1$ rate is available, the structure of the inequality in Equation~\eqref{eq:general_inequality} allows one to derive a $W_1$ rate for the mixing measure $P$ and a separate $\|\cdot\|_{\mathrm{op}}$ rate for the scale parameter $\Sigma$. This is made explicit in Propositions~\ref{pro:rate_multivariate_normal} and \ref{pro:rate_univariate_laplace} in Section~\ref{sec:posterior_rates}, which treat multivariate Gaussian and univariate Laplace mixtures under suitable Dirichlet process priors. For the Cauchy and multivariate Laplace kernels, instead, no explicit $L^1$ rate $\omega_n$ is currently available to the best of our knowledge, so that our inequalities yield rates for recovering $P_0$ and $\Sigma_0$ that are defined implicitly in terms of the unknown sequence $\omega_n$ (assuming the latter exists). We refer the reader to Section~\ref{sec:posterior_rates} for a detailed discussion of this issue for infinite Cauchy mixtures. 

\begin{table}
\caption{Lower-bounds on the $L^1$ distance for different kernel types.}
\label{table2}
\centering
\renewcommand{\arraystretch}{2} 
\begin{tabular}{@{}l@{\hspace{2em}}l@{}}
\toprule
\textbf{Kernel type} & $\big\|p_G-p_{G_0}\big\|_1\gtrsim$\\
\toprule
Gaussian (general scale, Theorem~\ref{thm:inequality_gaussian})& 
$\exp\left( - \frac{C\exp\big(\tilde C/W(P, P_0)^2\big)}{W_1(P, P_0)^2} \right)  + \exp\left(-M \frac{\log(1/\|\Sigma - \Sigma_0\|_{\mathrm{op}})}{\|\Sigma - \Sigma_0\|_{\mathrm{op}}} \right)$ \\
\midrule

Gaussian (isotropic scale, Theorem~\ref{thm:inequality_gaussian_isotropic})& 
$\exp\left( - \frac{C}{W_1(P, P_0)^2} \right)  + \exp\left(-M \frac{\log(1/\|\Sigma - \Sigma_0\|_{\mathrm{op}})}{\|\Sigma - \Sigma_0\|_{\mathrm{op}}} \right)$ \\
\midrule

Cauchy (Theorem~\ref{thm:inequality_cauchy}) & 
$\exp\left( - \frac{C}{W_1(P, P_0)} \right) + \|\Sigma - \Sigma_0\|_{\mathrm{op}}^2$ \\
\midrule

Laplace (Theorem~\ref{thm:inequality_laplace}) & 
$W_1(P, P_0)^{2} + \|\Sigma - \Sigma_0\|_{\mathrm{op}}$ \\
\bottomrule
\end{tabular}\vspace{1em}
\end{table}

Our posterior contraction results, summarized in Table~\ref{table3}, shed new light on two important points. First, across all of the kernel types we consider, there is significant heterogeneity in posterior contraction behavior between the location mixing measure $P$ and the scale parameter $\Sigma$, with the latter converging at substantially faster rates. Second, focusing on the kernel classes for which explicit $L^1$ rates are available, the comparison between Gaussian and Laplace mixtures shows that faster density estimation does not necessarily translate into faster recovery of the underlying parameters, as smoother kernels tend to blur more strongly the signal carried by $P_0$ and the variation in the scale parameter. This points to a practically relevant tradeoff: the choice of kernel should reflect whether the main objective is accurate density estimation or more structured tasks, such as clustering or parameter estimation, which depend directly on $P$ and $\Sigma$.

\begin{table}
\caption{$L^1$, $W_1$ and operator norm posterior contraction rates for Dirichlet process mixtures with different kernel types. For the Cauchy and multivariate Laplace cases, we express rates relative to an implicit $L^1$ rate $\omega_n$.}
\label{table3}
\centering
\renewcommand{\arraystretch}{2} 
\begin{tabular}{@{}l@{\hspace{1.5em}}@{\hspace{1.5em}}c@{\hspace{1.5em}}@{\hspace{1.5em}}c@{\hspace{1.5em}}@{\hspace{1.5em}}c@{}}
\toprule
\textbf{Kernel type} & $\|p_G-p_{G_0}\|_1$ & $W_1(P,P_0)$ & $\|\Sigma-\Sigma_0\|_{\text{op}}$ \\
\toprule
\begin{tabular}{@{}l@{}} Gaussian (general scale) \\[-1.5ex] (Proposition~\ref{pro:rate_multivariate_normal})\end{tabular} &
$n^{-1/2}(\log n)^{(d+1)/2}$ & 
$(\log\log n)^{-1/2}$ & 
$(\log n)^{-1}\log\log n$ \\
\midrule

\begin{tabular}{@{}l@{}} Gaussian (isotropic scale) \\[-1.5ex] (Proposition~\ref{pro:rate_multivariate_normal})\end{tabular} &
$n^{-1/2}(\log n)^{(d+1)/2}$ & 
$(\log n)^{-1/2}$ & 
$(\log n)^{-1}\log\log n$ \\
\midrule

\begin{tabular}{@{}l@{}}Cauchy \\[-1.5ex] (implicit in Theorem~\ref{thm:inequality_cauchy})\end{tabular} &
$\omega_{n}$ & 
$\left(\log(1/\omega_n)\right)^{-1}$ & 
$\omega_n^{1/2}$ \\
\midrule

\begin{tabular}{@{}l@{}} Laplace ($d=1$) \\[-1.5ex] (Proposition~\ref{pro:rate_univariate_laplace})\end{tabular} &
$n^{-1/4}(\log n)^{5/4}$ & 
$n^{-1/8}(\log n)^{5/8}$ & 
$n^{-1/4}(\log n)^{5/4}$ \\
\midrule

\begin{tabular}{@{}l@{}} Laplace ($d\geq2$)  \\[-1.5ex] (implicit in Theorem~\ref{thm:inequality_laplace})\end{tabular} &
$\omega_n$ & 
$\omega_n^{1/2}$ & 
$\omega_n$ \\
\bottomrule
\end{tabular}
\end{table}

To summarize, the main contributions of the article are as follows.
\begin{itemize}
    \item Using the dual formulation of the $W_1$ metric, together with functional-analytic approximation techniques, we develop new tools to lower-bound the $L^1$ distance between homoscedastic location-scale mixture densities in terms of the $W_1$ distance between the underlying location mixing measures and the operator norm distance between the underlying scales. The resulting inequalities depend on the smoothness of the mixture kernel and, in the ordinary-smooth case, can be significantly sharpened by verifying a novel and widely applicable PDE inversion condition;
    \item We specialize these abstract inequalities to common kernel choices, obtaining explicit bounds for multivariate Gaussian, Cauchy, and Laplace mixtures;
    \item Combining these results with existing $L^1$ contraction theory, for the first time in the literature we derive posterior convergence rates for mixing measures and scale parameters in infinite homoscedastic location-scale mixture models with Dirichlet process priors, focusing on the Gaussian and Laplace cases.
\end{itemize}

The rest of the article is organized as follows. Section~\ref{sec:notation} introduces notation and collects a few preliminary results. Section~\ref{sec:general_inequalities} develops abstract inequalities of the form in Equation~\eqref{eq:general_inequality} under different smoothness assumptions on the mixture kernel. Section~\ref{sec:examples} specializes these inequalities to commonly used kernels. Section~\ref{sec:posterior_rates} builds on the previous sections to derive posterior contraction rates for homoscedastic location-scale infinite mixture models with Dirichlet process priors. Section~\ref{sec:discussion} concludes. Complete proofs are provided in the appendices.

\section{Notation and preliminaries}\label{sec:notation}

Throughout the article, we assume that a sample of $\mathbb R^d$-valued observations $X_1,\ldots,X_n$ is available and drawn i.i.d. from an underlying mixture distribution $P_{G_0}$ (for $G_0=P_0\times\delta_{\Sigma_0}$) with density
\[
p_{G_0}(x)=\int_{\mathcal X} f(x\mid \theta,\Sigma_0)\,P_0(d\theta),\quad x\in\mathbb R^d,
\]
with respect to the Lebesgue measure  on $\mathbb R^d$ (endowed with its natural Borel $\sigma$-algebra $\mathscr B(\mathbb R^d)$). The specific form of the kernel $f$ will vary throughout, with
$$\Phi_\Sigma(\xi):=\int_{\mathbb R^d} e^{i\xi^\top x} f(x\mid 0,\Sigma)\,dx,\quad \xi\in\mathbb R^d,$$
representing the associated characteristic function; with a slight abuse of notation, we also write $\Phi_\mu(\xi):=\int_{\mathbb R^d}e^{i\xi^\top x}\mu(dx)$ for any probability measure $\mu$ on $\mathbb R^d$. Relatedly, for any integrable function  $h:\mathbb R^d\to \mathbb R$, 
 we denote its Fourier transform by \[
 \mathcal F[h](\xi):=\int_{\mathbb R^d}e^{-i\xi^\top x}h(x)dx, \quad \xi\in\mathbb R^d.
 \]

Prior and posterior distributions, which we have already defined and denoted respectively by $\Pi(\cdot)$ and $\Pi(\cdot\mid X_1,\ldots,X_n)$, are formally understood as probability measures on the space of absolutely continuous probabilities on $\mathbb R^d$, endowed with the topology of weak convergence and the corresponding Borel $\sigma$-algebra. However, since the priors we consider are supported on specific subclasses of mixture distributions, it is more convenient to view both the prior and the posterior as measures on the space of mixing measures generating such mixtures.

In particular, the priors $\Pi$ we consider concentrate on homoscedastic location-scale mixture densities arising from mixing measures of the form $G=P\times \delta_\Sigma$. For all such $G$ in the support of the prior (including the unknown $G_0$), we work under the standing assumption that there exist constants $R,\lambda_{\min},\lambda_{\max}>0$, with $\lambda_{\min}<\lambda_{\max}$, and a measurable set $\mathcal X\subseteq \bar B(0,R):=\{x\in\mathbb R^d : \|x\|\leq R\}$,\footnote{Denote by $B(0,R)$ the corresponding open ball.} such that
\[
\mathrm{supp}(P)\subseteq \mathcal X, 
\qquad 
\Sigma\in\mathbb S_d^+(\lambda_{\min},\lambda_{\max}),
\]
where $\mathrm{supp}(P)$ denotes the support of $P$, and $\mathbb S_d^+(\lambda_{\min},\lambda_{\max})$ is the set of symmetric positive-definite matrices with eigenvalues bounded below by $\lambda_{\min}$ and above by $\lambda_{\max}$. Therefore, the parameter space of interest is
$$\mathscr G:=\{G=P\times \delta_\Sigma \,:\, \mathrm{supp}(P)\subseteq \mathcal X, \,\Sigma\in\mathbb S_d^+(\lambda_{\min},\lambda_{\max})\},$$
and we write $\mathscr P:=\{p_G: G\in\mathscr G\}$. Throughout the text, we assume that the prior on $\mathscr G$ is of product form across its coordinates $P$ and $\Sigma$.

Our main interest is in the popular modeling strategy in which $P$ is assigned a Dirichlet process prior \citep{Ferguson}, written $P\sim \mathrm{DP}(a,H)$, where $a>0$ is a concentration parameter and $H$ is a diffuse base probability measure on $\mathbb R^d$; the scale $\Sigma$, instead, is modeled independently of $P$ with a prior having full support on $\mathbb S_d^+(\lambda_{\min},\lambda_{\max})$. In particular, for any finite measurable partition $A_1,\ldots,A_k$ of $\mathbb R^d$, this prior structure implies that the vector $(P(A_1),\ldots,P(A_k))$ has a Dirichlet distribution with parameters $(aH(A_1),\ldots,aH(A_k))$, and if $\mathrm{supp}(H)=\mathcal X$, then the support of $\Pi$ coincides with the set of mixing measures $G=P\times \delta_\Sigma$ with $\Sigma\in\mathbb S_d^+(\lambda_{\min},\lambda_{\max})$ and $\mathrm{supp}(P)\subseteq \mathcal X$ \citep{majumdar1992topological}.

A key feature of the Dirichlet process is the implied stick-breaking representation \citep{Sethuraman} for $P\sim\mathrm{DP}(a,H)$:
\[
P=\sum_{j\in\mathbb N} w_j\delta_{\theta_j},
\qquad 
w_1=B_1,\quad w_j=B_j\prod_{\ell=1}^{j-1}(1-B_\ell),\quad j\geq 2,
\]
where $B_j\overset{\text{i.i.d.}}{\sim}\mathrm{Beta}(1,a)$ and $\theta_j\overset{\text{i.i.d.}}{\sim}H$. This yields the generative mechanism
\[
X_1,\ldots,X_n \mid G \overset{\text{i.i.d.}}{\sim} p_G, 
\qquad 
p_G=\sum_{j\in\mathbb N} w_j f(\cdot\mid \theta_j,\Sigma),
\]
which can be interpreted as a two-stage procedure (once $G$ is fixed): first, latent mixture component assignments $c_i\in\mathbb N$ are drawn according to the discrete measure $\sum_{j\in\mathbb N}w_j\delta_j$ derived from $P$, and then $X_i$ is drawn from $f(\cdot\mid \theta_{c_i},\Sigma)$. This induces a natural partition of the data, where $X_i$ and $X_j$ are assigned to the same cluster whenever $c_i=c_j$. Clearly, this rich generative structure is governed by the mixing measure $G$, which is the main motivation driving the study of posterior contraction towards the unknown $G_0$.

As already discussed, a natural strategy is to deduce contraction around $G_0$ from $L^1$ contraction around $p_{G_0}$, leveraging the extensive literature on Bayesian nonparametric density estimation. To make this precise, we recall a general result (an immediate adaptation of Theorem~2.1 in \cite{Ghosal-2000}) giving sufficient conditions for $L^1$ contraction. Define
\[
V_1(p_{G_0},p_G):=\int_{\mathbb R^d}\log\left(\frac{p_{G_0}(x)}{p_G(x)}\right)p_{G_0}(x)\,dx,
\]
\[
V_2(p_{G_0},p_G):=\int_{\mathbb R^d}\left(\log\frac{p_{G_0}(x)}{p_G(x)}\right)^2 p_{G_0}(x)\,dx,
\]
and, for any $\omega > 0$ and $\mathscr A\subseteq \mathscr P$, let $D(\omega,\mathscr A,\|\cdot\|_1)$ denote the $L^1$ $\omega$-packing number of $\mathscr A$, i.e., the largest number of densities in $\mathscr A$ that are at least $\omega$ apart from each other in $L^1$ distance.

\begin{theorem}\label{thm:G-vdV}
Assume that, for a positive sequence $(\omega_n)_{n\in\mathbb N}$ with $\lim_{n\to\infty}\omega_n= 0$ and $\lim_{n\to\infty}n\omega_n^2=\infty$, there exist sets $\mathscr G_n\subseteq \mathscr G$ and a constant $C>0$ such that
\begin{align*}
\log D(\omega_n,\{p_G:G\in\mathscr G_n\},\|\cdot\|_1) &\leq n\omega_n^2,\\
\Pi(\mathscr G\setminus \mathscr G_n) &\leq \exp(-(C+4)n\omega_n^2),\\
\Pi\!\left(G\;:\; V_1(p_{G_0},p_G)\leq \omega_n^2,\;V_2(p_{G_0},p_G)\leq \omega_n^2\right) &\geq \exp(-Cn\omega_n^2)
\end{align*}
for all large enough $n\in\mathbb N$. Then
\[
\Pi(G\;:\;\|p_G-p_{G_0}\|_1\geq K\omega_n\mid X_1,\ldots,X_n)\to0
\]
in $P_{G_0}^n$-probability for all large enough $K>0$ as $n\to\infty$.
\end{theorem}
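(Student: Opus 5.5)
This statement is a direct adaptation of Theorem~2.1 in \cite{Ghosal-2000}, so the plan is to run the classical Ghosal--Ghosh--van der Vaart argument on the space $\mathscr P=\{p_G:G\in\mathscr G\}$. The posterior is transported to $\mathscr P$ through $G\mapsto p_G$, and the three conditions are read as conditions on the induced prior. The event of interest $\{G:\|p_G-p_{G_0}\|_1\geq K\omega_n\}$ depends on $G$ only through $p_G$, so working on the density level is legitimate; the sieve $\mathscr G_n$ is likewise pushed forward to $\{p_G:G\in\mathscr G_n\}$. Write $B_n$ for the Kullback--Leibler-type neighborhood in the third condition.

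\textbf{Step 1: denominator.} We lower-bound the normalizing integral $\int\prod_i (p_G/p_{G_0})(X_i)\,\Pi(dG)$. The standard lemma (Lemma~8.1 in \cite{Ghosal-2000}) uses Jensen's inequality and Chebyshev's inequality, with the bounds $V_1,V_2\leq\omega_n^2$ on $B_n$. It gives that, with $P_{G_0}^n$-probability tending to one, the integral is at least $\Pi(B_n)\exp(-2n\omega_n^2)$. By the prior mass condition this is at least $\exp(-(C+2)n\omega_n^2)$.

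\textbf{Step 2: sieve complement.} By Fubini, $P_{G_0}^n\int_{\mathscr G\setminus\mathscr G_n}\prod_i (p_G/p_{G_0})(X_i)\,\Pi(dG)\leq \Pi(\mathscr G\setminus\mathscr G_n)\leq \exp(-(C+4)n\omega_n^2)$. Combined with Step 1 and Markov's inequality, the posterior mass of $\mathscr G\setminus\mathscr G_n$ vanishes in probability; here $n\omega_n^2\to\infty$ is used.

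\textbf{Step 3: tests on the sieve.} This is the main technical step. Using the packing bound $\log D(\omega_n,\cdot)\leq n\omega_n^2$, we build tests $\phi_n$ with
\[
P_{G_0}^n\phi_n\leq e^{n\omega_n^2}\,\frac{e^{-cK^2n\omega_n^2}}{1-e^{-cK^2n\omega_n^2}}
\quad\text{and}\quad
\sup_{G\in\mathscr G_n:\,\|p_G-p_{G_0}\|_1\geq K\omega_n}P_G^n(1-\phi_n)\leq e^{-cK^2n\omega_n^2}.
\]
The construction takes the maximum of Le Cam--Birgé likelihood-ratio tests for convex $L^1$ balls, built over a maximal $\omega_n$-separated set in each shell $\{jK\omega_n\leq\|p-p_{G_0}\|_1<(j+1)K\omega_n\}$, with the error probabilities summed over $j$. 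This needs care: the packing bound is stated at scale $\omega_n$ only, but packing numbers are monotone, which is enough to control every shell.

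\textbf{Step 4: conclusion.} We split the posterior mass of the bad event as $\phi_n$ plus $(1-\phi_n)$ times the posterior mass restricted to the sieve. The term $P_{G_0}^n\phi_n$ vanishes once $cK^2>1$. For the other term, Fubini and the type II error bound make the numerator's expectation at most $e^{-cK^2n\omega_n^2}$. Dividing by the Step 1 lower bound gives $e^{-(cK^2-C-2)n\omega_n^2}\to0$ for $K$ large, which proves the claim.
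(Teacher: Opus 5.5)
Your proposal is correct and follows the paper's route. The paper gives no separate argument; it simply invokes Theorem~2.1 of \cite{Ghosal-2000}, which covers the $L^1$/total variation metric, and your four steps reproduce that proof: Lemma~8.1 for the denominator, Fubini plus Markov for the sieve complement, shell-wise robust tests as in their Section~7, and the final split of the posterior mass. The only cosmetic difference is that you net each shell at the fixed scale $\omega_n$, whereas Ghosal, Ghosh and van der Vaart use a shell-dependent scale. Your choice works equally well for $K$ large: the $\omega_n$-balls then lie inside the robustness radius of each basic test, and the packing bound at scale $\omega_n$ directly caps the net size in every shell.
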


The conditions  of the theorem are standard: they ensure an $L^1$ rate by requiring that (i) complex densities receive negligible prior mass, and (ii) enough mass is placed near the true density in the Kullback--Leibler sense. On the other hand, as implied by Lemma 1 in \cite{Nguyen-13}, under mild continuity assumptions on $(\theta,\Sigma)\mapsto f(\cdot\mid\theta,\Sigma)$ one has
\[
\|p_G-p_{G'}\|_1 \lesssim W_1(G,G')
\]
for discrete mixing measures $G,G'\in\mathscr G$.\footnote{Using $\rho((x,\Sigma), (x',\Sigma'))=\|x-x'\| + \|\Sigma - \Sigma'\|_{\mathrm{op}}$ as a ground metric on the product space $\Theta=\mathcal X\times\mathbb S_d^+(\lambda_{\min},\lambda_{\max})$.} Thus, convergence of mixing measures implies convergence of densities, but the converse is not necessarily true. Establishing a reverse inequality
\[
\|p_G-p_{G'}\|_1 \gtrsim \mathcal D(G,G'),
\]
of the form in Equation~\eqref{eq:general_inequality} is however essential to deduce posterior contraction around $G_0$ from $L^1$ results such as Theorem~\ref{thm:G-vdV}, and identifying conditions under which such inverse bounds hold has been a central question since \cite{Nguyen-13}. The main goal of this article is to address this problem in the homoscedastic location-scale setting where $G=P\times\delta_\Sigma$ and $G'=P'\times\delta_{\Sigma'}$. We do so by first deriving general inverse bounds under different smoothness assumptions on $f$ (in Section~\ref{sec:general_inequalities}), while in Sections~\ref{sec:examples} and \ref{sec:posterior_rates} we specialize these results to Gaussian, Cauchy, and Laplace kernels, and combine them with existing $L^1$ contraction results (typically obtained via arguments related to Theorem~\ref{thm:G-vdV}) to derive corresponding $W_1$ posterior contraction rates.

\section{Inverse Bounds for Mixing Measure Discrepancies}\label{sec:general_inequalities}

In this section, we derive lower-bounds for the $L^1$ distance between mixture densities $p_G$ and $p_{G'}$, where $G=P\times\delta_{\Sigma}$ and $G'=P'\times\delta_{\Sigma'}$, in terms of suitable functions of the $W_1$ distance between the mixing measures $P$ and $P'$, and the operator norm distance between the scale matrices $\Sigma$ and $\Sigma'$.

We begin by stating two assumptions that will be used throughout.

\begin{assumption}\label{ass1}
    There exists a strictly increasing function $\Psi:[0,\infty)\to[0,\infty)$ such that, for any $G=P\times \delta_\Sigma$ and  $G'=P'\times \delta_{\Sigma'}$ in $\mathscr G$,
    \begin{align*}
        \|p_G - p_{G'}\|_1 \geq C \Psi(\|\Sigma - \Sigma'\|_{\mathrm{op}}),
    \end{align*}
    for a positive quantity $C$ that may depend on $R$, $\lambda_{\min}$, $\lambda_{\max}$, and $d$.
\end{assumption}
The specific form of $\Psi$ depends on the choice of kernel, but the requirement itself is mild: since the location space $\mathcal X$ is compact, a small $L^1$ distance between densities is expected to imply closeness of the corresponding scale parameters. This will be verified for several important kernel families in Section~\ref{sec:examples}.

\begin{assumption}\label{ass2}
    There exist an exponent $p \geq 1$ and a strictly increasing function $\Xi:[0,\infty)\to [0,\infty)$ such that, for all $\xi \in \mathbb{R}^d$ and $\Sigma, \Sigma' \in \mathbb{S}^{+}_{d}(\lambda_{\min}, \lambda_{\max})$,
    $$
    |\Phi_{\Sigma'}(\xi) - \Phi_\Sigma(\xi)| \leq C \Xi(\|\Sigma - \Sigma'\|_{\mathrm{op}}) \|\xi\|^p \max\{ |\Phi_{\Sigma'}(\xi)|,|\Phi_\Sigma(\xi)| \},
    $$
   for a positive quantity $C$ that may depend on $R$, $\lambda_{\min}$, $\lambda_{\max}$, and $d$.  
\end{assumption}

Assumption~\ref{ass2} requires the characteristic function associated with the kernel $f(\cdot\mid0,\Sigma)$ to vary in a controlled manner with respect to $\Sigma$. In particular, the dependence is governed by $\Xi$, with a degree-$p$ polynomial modulation in $\|\xi\|$ and a linear one in the magnitude of the characteristic function itself.

We are now in a position to state our main results. Towards that goal, we will consider three settings separately: when the kernel $f$ is super-smooth, when it is ordinary smooth, and when the mixture satisfies a certain PDE inversion condition. 

\subsection{Super-smooth kernel}

We first consider the case in which the mixture kernel $f$ is super-smooth. For the next definition, recall that $\Phi_\Sigma(\xi)$ denotes the characteristic function, evaluated at $\xi$, associated with the density $f(\cdot\mid 0,\Sigma)$.
\begin{definition}
    The density kernel $f$ is \emph{super-smooth of order $\alpha>0$} if
    $$\exp(-c_1 \|\xi\|^\alpha) \le |\Phi_\Sigma(\xi)| \le \exp(-c_2 \|\xi\|^\alpha)$$
    for all $\xi\in\mathbb R^d$ and $\Sigma\in\mathbb{S}^{+}_{d}(\lambda_{\min},\lambda_{\max})$, where $c_{1}$ and $c_{2}$ are constants depending only on $\lambda_{\min}$, $\lambda_{\max}$, and $d$.
\end{definition}
That is, super-smoothness requires an exponential decay of $\Phi_\Sigma(\xi)$ as a function of $\| \xi \|$. This condition is satisfied by several commonly used kernels, including the Gaussian and Cauchy kernels considered in Section~\ref{sec:examples}. Combined with Assumptions~\ref{ass1} and~\ref{ass2}, super-smoothness yields the following inverse bound.

\begin{theorem}\label{theorem:supersmooth_rate}
    Under Assumptions~\ref{ass1} and~\ref{ass2}, if the kernel $f$ is super-smooth of order $\alpha$, there exist positive quantities $C,\widetilde C, C'$, depending on $R$, $\lambda_{\min}$, $\lambda_{\max}$, $p$, and $d$, such that 
    \begin{align}
        \|p_G &- p_{G'}\|_1 \geq C' \bigg[ \Psi(\|\Sigma - \Sigma'\|_{\mathrm{op}})\nonumber\\
        & +\min\left\{ \exp\left( - \frac{C}{W_1(P, P')^\alpha} \right), \;(\Psi\circ \Xi^{-1})\left(\widetilde C\exp\left( - \frac{C}{W_1(P, P')^\alpha} \right)\right) \right\}\bigg] \label{eq:lower_bound_super_smooth}
    \end{align}
    for any $G=P\times \delta_\Sigma$ and  $G'=P'\times \delta_{\Sigma'}$ in $\mathscr G$.
\end{theorem}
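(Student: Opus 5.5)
The proof reduces to two separate lower bounds, one for the scale and one for the location measure. Assumption~\ref{ass1} directly gives $\|p_G-p_{G'}\|_1\gtrsim\Psi(\|\Sigma-\Sigma'\|_{\mathrm{op}})$. It therefore suffices to show
\[
\|p_G-p_{G'}\|_1\gtrsim \min\{e^{-C/W^\alpha},(\Psi\circ\Xi^{-1})(\widetilde C e^{-C/W^\alpha})\},\qquad W:=W_1(P,P'),
\]
and then to average the two bounds, since $\max\{a,b\}\ge (a+b)/2$.

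\emph{Dual formulation and smoothing.} By Kantorovich--Rubinstein, pick $h^*\in\mathrm{Lip}_1(\mathcal X)$ attaining $W=\int h^*\,d(P-P')$, normalized so that $h^*(0)=0$ and hence $|h^*|\le R$ on $\mathcal X$. Extend it via McShane to a 1-Lipschitz function on $\mathbb R^d$, truncated to $[-R,R]$ and multiplied by a smooth cutoff equal to $1$ on $\bar B(0,R)$ and supported in $B(0,2R)$. This yields a compactly supported Lipschitz function $\widetilde h$. Set $\widetilde h_\Lambda:=\widetilde h*K_\Lambda$, where $K_\Lambda(x)=\Lambda^dK(\Lambda x)$ and $K$ is a Schwartz-type kernel whose Fourier transform is supported in the unit ball (e.g.\ $\mathcal F[K]$ a smooth bump) and which has finite first moment. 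Then $\|\widetilde h-\widetilde h_\Lambda\|_\infty\lesssim\Lambda^{-1}$, so
\[
W\le \Big|\int\widetilde h_\Lambda\,d(P-P')\Big|+C_0\Lambda^{-1},
\]
and $\mathcal F[\widetilde h_\Lambda]$ is supported in $\{\|\xi\|\le\Lambda\}$ with $\|\mathcal F[\widetilde h_\Lambda]\|_\infty\le\|\widetilde h\|_1\lesssim R^{d+1}$.

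\emph{Fourier step.} By Parseval,
\[
\int\widetilde h_\Lambda\,d(P-P')=(2\pi)^{-d}\int_{\|\xi\|\le\Lambda}\overline{\mathcal F[\widetilde h_\Lambda]}(\xi)\,(\Phi_P-\Phi_{P'})(\xi)\,d\xi .
\]
Write
\[
\Phi_P\Phi_\Sigma-\Phi_{P'}\Phi_{\Sigma'}=(\Phi_P-\Phi_{P'})\Phi_\Sigma+\Phi_{P'}(\Phi_\Sigma-\Phi_{\Sigma'}),
\]
so that
\[
|\Phi_P-\Phi_{P'}|\le |\Phi_\Sigma|^{-1}\big(\|p_G-p_{G'}\|_1+|\Phi_\Sigma-\Phi_{\Sigma'}|\big).
\]
For $\|\xi\|\le\Lambda$, super-smoothness gives $|\Phi_\Sigma|^{-1}\le e^{c_1\Lambda^\alpha}$. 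Assumption~\ref{ass2}, with $\max\{|\Phi_\Sigma|,|\Phi_{\Sigma'}|\}\le1$, gives
\[
|\Phi_\Sigma-\Phi_{\Sigma'}|\lesssim \Xi(\|\Sigma-\Sigma'\|_{\mathrm{op}})\,\Lambda^p .
\]
Integrating over the ball of volume $\asymp\Lambda^d$ yields
\[
W\le C_0\Lambda^{-1}+C_1\Lambda^{d}e^{c_1\Lambda^\alpha}\Big(\|p_G-p_{G'}\|_1+\Lambda^p\,\Xi(\|\Sigma-\Sigma'\|_{\mathrm{op}})\Big).
\]

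\emph{Choice of $\Lambda$ and case split.} Take $\Lambda=2C_0/W$, so the first term is at most $W/2$. Absorbing the polynomial factors into the exponential (using $W\le 2R$ to get a uniform constant) gives
\[
\|p_G-p_{G'}\|_1+\Lambda^p\,\Xi(\|\Sigma-\Sigma'\|_{\mathrm{op}})\ge \epsilon:=\widetilde C' e^{-C/W^\alpha}.
\]
If $\|p_G-p_{G'}\|_1\ge\epsilon/2$, we are done. Otherwise
\[
\Xi(\|\Sigma-\Sigma'\|_{\mathrm{op}})\ge \tfrac{\epsilon}{2}\Lambda^{-p}\gtrsim W^p e^{-C/W^\alpha}\ge \widetilde C e^{-C''/W^\alpha}
\]
for an enlarged constant $C''$. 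Applying the increasing map $\Psi\circ\Xi^{-1}$ and then Assumption~\ref{ass1} gives
\[
\|p_G-p_{G'}\|_1\gtrsim(\Psi\circ\Xi^{-1})(\widetilde C e^{-C''/W^\alpha}).
\]
Taking the minimum of the two cases, with the larger constant in the exponent in both, gives the claimed bound.

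The main obstacle is the construction of $\widetilde h_\Lambda$. It must simultaneously be close to $h^*$ at rate $\Lambda^{-1}$ on $\mathcal X$, have Fourier transform supported in (or rapidly decaying outside) $\{\|\xi\|\le\Lambda\}$, and have a uniformly bounded Fourier transform. These requirements force the compactly supported extension together with a band-limited mollifier with a finite first moment. If exact band-limitation is inconvenient, a Gaussian mollifier with a tail estimate would be tried instead.
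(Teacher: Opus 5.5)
Your proposal is correct and follows the paper's proof essentially step for step. Both arguments use Kantorovich--Rubinstein duality with a McShane--Whitney extension and smooth cutoff, then convolution with a band-limited kernel at scale $\Lambda$. Your kernel with compactly supported Fourier transform plays exactly the role of the paper's Jackson kernel. Both then apply Fourier inversion with the same split of $\Phi_P-\Phi_{P'}$ into a $\Phi_{p_G}-\Phi_{p_{G'}}$ term and a $\Phi_\Sigma-\Phi_{\Sigma'}$ term, choose $\Lambda\asymp W_1(P,P')^{-1}$, and finish with a two-case split.

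The remaining differences only change constants. You bound $\max\{|\Phi_\Sigma|,|\Phi_{\Sigma'}|\}\le 1$ rather than by $e^{-c_2\|\xi\|^\alpha}$; the paper also ends up absorbing everything into a single $e^{c_1\Lambda^\alpha}$ factor. You split cases before rather than after invoking Assumption~\ref{ass1}, which is equivalent. One small slip: under the paper's conventions the identity is $\int \widetilde h_\Lambda\,d(P-P')=(2\pi)^{-d}\int\mathcal F[\widetilde h_\Lambda]\,(\Phi_P-\Phi_{P'})\,d\xi$, with no conjugate on $\mathcal F[\widetilde h_\Lambda]$. This is harmless, since only moduli are used afterwards.
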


The bound in Theorem~\ref{theorem:supersmooth_rate} depends on $\|\Sigma - \Sigma'\|_{\mathrm{op}}$ only through the function $\Psi$, which naturally aligns with Assumption~\ref{ass1}. In contrast, the dependence on $W_1(P,P')\equiv w$ is more complex, entering through terms of the form $\exp(-C/w^\alpha)$ that are possibly mediated by the composition $\Psi\circ \Xi^{-1}$. Furthermore, the lower-bound deteriorates as the smoothness level $\alpha$ increases. From a deconvolution perspective, this is intuitive: a smoother kernel makes the recovery of the underlying mixing measure statistically more challenging, as reflected by a looser bound.

Beyond the lower-bound results and their implications for statistical estimation of mixture models, a central contribution of this article is the development of new techniques to derive such bounds, based on functional approximations in the dual formulation of $W_1$ and tools from Fourier analysis and PDEs. Therefore, we now outline the main steps of the argument, deferring the full proof to the appendices.

\begin{proof}[Proof sketch of Theorem~\ref{theorem:supersmooth_rate}]
The proof consists of five steps. We first use the Kantorovich--Rubinstein duality theorem to represent $W_1(P,P')$ as an integral of an optimal $1$-Lipschitz function $h^*$, which we extend to all of $\mathbb{R}^d$ via a McShane--Whitney argument. We then construct a spectrally truncated approximation of $h^*$ using a Jackson kernel, controlling the resulting approximation error. Fourier inversion and Fubini's theorem allow us to express the integral in terms of characteristic functions, which we split into two contributions via the triangle inequality. Each contribution is then bounded using the super-smoothness assumption and Assumptions~\ref{ass1} and \ref{ass2}. Optimizing over the truncation parameter and rearranging finally yields the claimed inequality. Below is a more detailed discussion of each step.

\emph{Step 1: McShane--Whitney extension.\hspace{1em}} By the Kantorovich--Rubinstein duality theorem \citep{Villani-09}, for every $P$ and $P'$ there exists $h^* \in \mathrm{Lip}_1(\mathcal{X})$ attaining
\[
W_1(P, P') = \int_{\mathcal{X}} h^*(\theta)\, (P(d\theta) - P'(d\theta)).
\]
Replacing $h^*$ with $h^*(\cdot) - h^*(0)$ leaves the integral unchanged and ensures $|h^*(x)| \leq \|x\|$ for all $x \in \mathcal{X}$. Our goal is to analyze $W_1(P,P')$ via Fourier inversion, which requires extending $h^*$ to all of $\mathbb{R}^d$ and working with a sufficiently regular approximation thereof. We first extend $h^*$ to a globally $1$-Lipschitz continuous function on $\mathbb{R}^d$ via the McShane--Whitney infimal convolution \citep{heinonen2001lectures,McShane1934,Whitney1934}
\[
\bar{h}_{\mathcal{X}}^{*}(x) := \inf_{y \in \mathcal{X}} \Big\{ h^*(y) + \|x - y\| \Big\},
\]
and then multiply it by a compactly supported and infinitely differentiable cutoff $\eta \in C_c^\infty(\mathbb{R}^d)$ satisfying $\eta(x)=1$ for $x\in\bar{B}(0,R)$ and $\eta(x)=0$ for $x\not\in B(0,2R)$ \citep{Lee2013SmoothManifolds}. The resulting function $\widetilde{h}^*_{\mathcal{X}}$ belongs to $L^1(\mathbb{R}^d)$, is globally $(1 + 2RM_\eta)$-Lipschitz continuous (where $M_\eta := \sup_{x\in\mathbb{R}^d}\|\nabla\eta(x)\| < \infty$), and preserves the property
\[
W_1(P, P')=\int_{\mathbb{R}^d} \widetilde{h}^*_{\mathcal{X}}(\theta)\, (P(d\theta) - P'(d\theta)).
\]

\emph{Step 2: Spectral truncation.\hspace{1em}} To apply Fourier inversion, we convolve $\widetilde{h}^*_{\mathcal{X}}$ with a rescaled Jackson kernel $K_\Lambda(x) := \Lambda^d K(\Lambda x)$, whose Fourier transform is supported on $\bar{B}(0,\Lambda)$, to obtain $\widetilde{h}^*_{\mathcal{X},\Lambda} := \widetilde{h}^*_{\mathcal{X}} * K_\Lambda$ \citep{devore1993constructive,butzer1971fourier}. The Lipschitz continuity of $\widetilde{h}^*_{\mathcal{X}}$ implies the approximation bound
\[
\|\widetilde{h}^*_{\mathcal{X},\Lambda} - \widetilde{h}^*_{\mathcal{X}}\|_\infty := \sup_{x\in\mathbb{R}^d}\big|\widetilde{h}^*_{\mathcal{X},\Lambda}(x) - \widetilde{h}^*_{\mathcal{X}}(x)\big| \leq (1+2RM_\eta)M_K/\Lambda,
\]
where $M_K := \int_{\mathbb{R}^d}\|x\|K(x)\,dx < \infty$. Combined with $\sup_{A\in\mathscr{B}(\mathbb{R}^d)}|P(A) - P'(A)| \leq 2$, this gives
\begin{equation}
\label{eq:sketch_split}
W_1(P, P') \;\leq\; \frac{C_0}{\Lambda} + \int_{\mathbb{R}^d} \widetilde{h}^*_{\mathcal{X},\Lambda}(\theta)\, (P(d\theta) - P'(d\theta))
\end{equation}
for a constant $C_0 > 0$ depending only on $R$.

\emph{Step 3: Fourier inversion.\hspace{1em}} Since $\widetilde{h}^*_{\mathcal{X},\Lambda} \in L^1(\mathbb{R}^d)$ and $\mathcal{F}[\widetilde{h}^*_{\mathcal{X},\Lambda}]$ is supported in $\bar{B}(0,\Lambda)$ and bounded, the Fourier inversion and Fubini's theorems yield
\[
\int_{\mathbb{R}^d} \widetilde{h}^*_{\mathcal{X},\Lambda}(\theta)\, (P(d\theta) - P'(d\theta)) = \frac{1}{(2\pi)^d} \int_{\bar{B}(0,\Lambda)} \mathcal{F}[\widetilde{h}^*_{\mathcal{X},\Lambda}](\xi)\,\big(\Phi_P(\xi) - \Phi_{P'}(\xi)\big)\, d\xi.
\]
Writing $\Phi_P(\xi) - \Phi_{P'}(\xi) = \Phi_{p_G}(\xi)/\Phi_\Sigma(\xi) - \Phi_{p_{G'}}(\xi)/\Phi_{\Sigma'}(\xi)$ and applying the triangle inequality, the right-hand side is bounded above by $C_R$ times
\begin{equation}
\label{eq:sketch_two_terms}
\int_{\bar{B}(0,\Lambda)} \frac{|\Phi_{p_G}(\xi) - \Phi_{p_{G'}}(\xi)|}{|\Phi_\Sigma(\xi)|}\, d\xi \;+\; \int_{\bar{B}(0,\Lambda)} \frac{|\Phi_{\Sigma'}(\xi) - \Phi_\Sigma(\xi)|}{|\Phi_\Sigma(\xi)|}\, d\xi,
\end{equation}
where $C_R > 0$ depends only on $R$ and $d$.

\emph{Step 4: Bounding via super-smoothness.\hspace{1em}} For the first term in Equation~\eqref{eq:sketch_two_terms}, we use $|\Phi_{p_G}(\xi) - \Phi_{p_{G'}}(\xi)| \leq \|p_G - p_{G'}\|_1$ and the lower-bound $|\Phi_\Sigma(\xi)| \geq \exp(-c_1\|\xi\|^\alpha)$ from the super-smoothness assumption to get
\[
\int_{\bar{B}(0,\Lambda)} \frac{|\Phi_{p_G}(\xi) - \Phi_{p_{G'}}(\xi)|}{|\Phi_\Sigma(\xi)|}\,d\xi \;\lesssim\; \|p_G - p_{G'}\|_1\cdot \Lambda^d \exp(c_1 \Lambda^\alpha).
\]
For the second term, Assumption~\ref{ass2} and the super-smoothness bounds together yield
\[
\int_{\bar{B}(0,\Lambda)} \frac{|\Phi_{\Sigma'}(\xi) - \Phi_\Sigma(\xi)|}{|\Phi_\Sigma(\xi)|}\,d\xi \;\lesssim\; \Xi(\|\Sigma-\Sigma'\|_{\mathrm{op}})\cdot \Lambda^{d+p}\exp((c_1-c_2)\Lambda^\alpha).
\]

\emph{Step 5: Conclusion.\hspace{1em}} Substituting these bounds into Equation~\eqref{eq:sketch_split} and using Assumption~\ref{ass1} to replace $\Xi(\|\Sigma - \Sigma'\|_{\mathrm{op}})$ with $(\Xi \circ \Psi^{-1})(\|p_G - p_{G'}\|_1)$, we obtain
\[
W_1(P,P') \;\lesssim\; \frac{C_0}{\Lambda} + \|p_G - p_{G'}\|_1 \cdot \Lambda^d e^{c_1\Lambda^\alpha} + (\Xi\circ\Psi^{-1})(\|p_G-p_{G'}\|_1)\cdot \Lambda^{d+p}e^{(c_1-c_2)\Lambda^\alpha}.
\]
Setting $\Lambda \asymp W_1(P,P')^{-1}$ and applying the elementary bound $t^{-k} \lesssim \exp(t^{-\alpha})$ for any $k, \alpha > 0$ and $t \in (0,1]$, we rearrange to obtain that either
\[
\|p_G - p_{G'}\|_1 \;\gtrsim\; \exp\!\left(-\frac{C}{W_1(P,P')^\alpha}\right),
\]
or
\[
\|p_G - p_{G'}\|_1 \;\gtrsim\; (\Psi\circ \Xi^{-1})\!\left(\exp\!\left(-\frac{C}{W_1(P,P')^\alpha}\right)\right),
\]
which together with Assumption~\ref{ass1} give Equation~\eqref{eq:lower_bound_super_smooth}.
\end{proof}

\subsection{Ordinary-smooth kernel}

The second setting we consider involves an ordinary-smooth kernel $f$, which is formalized as follows.

\begin{definition}
    The density kernel $f$ is \emph{ordinary-smooth of order $\beta>0$} if
    $$c_{1} (1 + \|\xi\|^\beta)^{-1} \le |\Phi_\Sigma(\xi)| \le c_{2}(1 + \|\xi\|^\beta)^{-1}$$
    for all $\xi \in \mathbb{R}^{d}$ and $\Sigma\in\mathbb{S}^{+}_{d}(\lambda_{\min},\lambda_{\max})$, where $c_{1}$ and $c_{2}$ are constants depending on $\lambda_{\min},\lambda_{\max}$, and $d$. 
\end{definition}

Unlike in the super-smooth case, ordinary-smooth kernels are characterized by a polynomial decay of the characteristic function, corresponding to a substantially lower degree of smoothness. This leads to the following inverse bound.

\begin{theorem}\label{theorem:ordinary_rate}
    Under Assumptions~\ref{ass1}~and~\ref{ass2}, if the kernel $f$ is ordinary-smooth of order $\beta$, there exist positive constants $C$ and $C'$, depending on $R$, $\lambda_{\min}$, $\lambda_{\max}$, $p$, and $d$, such that
    \begin{align}
     \|p_G &- p_{G'}\|_1 \geq C' \Big[\Psi(\|\Sigma - \Sigma'\|_{\mathrm{op}})\nonumber\\
     & +\min\Big( W_1(P, P')^{d+\beta+1}, \;(\Psi\circ \Xi^{-1})\big(C W_1(P, P')^{d+p+1}\big) \Big)\Big] \label{eq:lower_bound_ordinary_smooth}
     \end{align}
     for any $G=P\times \delta_\Sigma$ and $G'=P'\times \delta_{\Sigma'}$ in $\mathscr G$.
\end{theorem}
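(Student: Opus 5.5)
I would follow the same five-step scheme as in the proof sketch of Theorem~\ref{theorem:supersmooth_rate}, changing only the estimates of Step~4 to use the polynomial decay of ordinary smoothness.

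\emph{Reduction to a split inequality.} Steps~1--3 do not use any smoothness of $f$, so I reuse them verbatim. By Kantorovich--Rubinstein duality, take an optimal $h^*\in\mathrm{Lip}_1(\mathcal X)$, extend it by McShane--Whitney, and multiply by the cutoff $\eta$. Then convolve with the Jackson kernel $K_\Lambda$. This gives Equation~\eqref{eq:sketch_split},
$$W_1(P,P')\le C_0/\Lambda + \int \widetilde h^*_{\mathcal X,\Lambda}\,d(P-P').$$
Fourier inversion then bounds the last integral by $C_R$ times the two integrals in Equation~\eqref{eq:sketch_two_terms}. Here I use that $\mathcal F[\widetilde h^*_{\mathcal X,\Lambda}]$ is bounded by $\|\widetilde h^*_{\mathcal X}\|_{L^1}\lesssim R^{d+1}$.

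\emph{Polynomial estimates.} On $\bar B(0,\Lambda)$ with $\Lambda\ge1$, the lower bound $|\Phi_\Sigma(\xi)|\ge c_1(1+\|\xi\|^\beta)^{-1}$ gives
$$\int_{\bar B(0,\Lambda)}\frac{|\Phi_{p_G}-\Phi_{p_{G'}}|}{|\Phi_\Sigma|}\,d\xi\lesssim \|p_G-p_{G'}\|_1\,\Lambda^{d+\beta}.$$
For the second integral, Assumption~\ref{ass2} gives the bound $\Xi(\|\Sigma-\Sigma'\|_{\rm op})\|\xi\|^p\max\{|\Phi_{\Sigma'}|,|\Phi_\Sigma|\}/|\Phi_\Sigma|$. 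Both characteristic functions are of order $(1+\|\xi\|^\beta)^{-1}$, so the ratio is at most $c_2/c_1$. This is where ordinary smoothness improves on the super-smooth case, which carried the factor $e^{(c_1-c_2)\Lambda^\alpha}$. The second integral is therefore $\lesssim\Xi(\|\Sigma-\Sigma'\|_{\rm op})\Lambda^{d+p}$. Using Assumption~\ref{ass1} to write $\Xi(\|\Sigma-\Sigma'\|_{\rm op})\le \Xi(\Psi^{-1}(\|p_G-p_{G'}\|_1/C))$, I obtain
$$W_1\lesssim \Lambda^{-1}+\|p_G-p_{G'}\|_1\Lambda^{d+\beta}+(\Xi\circ\Psi^{-1})(c\|p_G-p_{G'}\|_1)\Lambda^{d+p}.$$

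\emph{Choice of $\Lambda$ and case split.} Set $\Lambda=2C_0'/W_1(P,P')$, which is $\ge1$ because $W_1\le 2R$ (enlarge the constant if needed). The first term is then at most $W_1/2$, so one of the other two terms is $\gtrsim W_1$. In the first case, $\|p_G-p_{G'}\|_1\gtrsim W_1^{1+d+\beta}$. In the second case, $(\Xi\circ\Psi^{-1})(c\|p_G-p_{G'}\|_1)\gtrsim W_1^{d+p+1}$. Applying the strictly increasing map $\Psi\circ\Xi^{-1}$ gives $\|p_G-p_{G'}\|_1\gtrsim(\Psi\circ\Xi^{-1})(CW_1^{d+p+1})$. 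In either case $\|p_G-p_{G'}\|_1$ dominates the minimum in Equation~\eqref{eq:lower_bound_ordinary_smooth}. Averaging this with Assumption~\ref{ass1}, via $\|p_G-p_{G'}\|_1\ge\frac12(a+b)$ when $\|p_G-p_{G'}\|_1\ge a$ and $\ge b$, gives the sum form.

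\emph{Main obstacle.} The main delicate point is bookkeeping rather than ideas. The constants inside $\Psi^{-1}$ and $\Xi^{-1}$ must be handled carefully, since these functions are only monotone and not homogeneous. I will keep the constant inside the argument as the statement does, and make sure $\Xi^{-1}$ is evaluated within the range of $\Xi$. If $CW_1^{d+p+1}$ exceeds $\sup\Xi$, the second case is impossible, so I will treat that case separately.
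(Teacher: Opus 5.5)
Your proposal is correct and follows the paper's proof essentially step for step. You use the same duality, McShane--Whitney extension, cutoff and Jackson-kernel reduction, the same bounds $\|p_G-p_{G'}\|_1\Lambda^{d+\beta}$ and $\Xi(\|\Sigma-\Sigma'\|_{\mathrm{op}})\Lambda^{d+p}$ (the latter via the ratio $c_2/c_1$), and the same choice $\Lambda\asymp 1/W_1(P,P')$ followed by the ``one summand is at least half'' split. The only differences are cosmetic: you enlarge the constant in $\Lambda$ (using $W_1(P,P')\le 2R$) to force $\Lambda\ge 1$, whereas the paper treats $W_1(P,P')\ge 2C_1$ separately with $\Lambda=1$; and your remark about keeping the argument of $\Xi^{-1}$ inside the range of $\Xi$ addresses a point the paper leaves implicit.
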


The structure of the bound in Theorem~\ref{theorem:ordinary_rate} is similar to its super-smooth counterpart in Theorem~\ref{theorem:supersmooth_rate}, in terms of the role played by the functions $\Psi$ and $\Xi$ in mediating the dependence on $\|\Sigma - \Sigma'\|_{\mathrm{op}}$ and $W_1(P, P')$. However, the $W_1$ component of the bound is significantly improved under the ordinary-smooth assumption: the inverse-exponential dependence in the super-smooth case is replaced by a polynomial one, with $\beta$ worsening the rate as it increases. A drawback of Theorem~\ref{theorem:ordinary_rate}, however, is the explicit dependence on the dimension $d$, which may inflate the polynomial degree. Intuitively, this reflects the fact that the ordinary-smooth assumption imposes only mild conditions on the tail behavior of the characteristic function of $f$, whereas in many cases of interest (e.g., Laplace mixtures, as shown in Section~\ref{sec:examples}) the mixture possesses useful additional structure. This motivates, in the next subsection, turning to a more refined condition to obtain dimension-free bounds.

\begin{proof}[Proof sketch of Theorem~\ref{theorem:ordinary_rate}]
The proof follows the same structure as that of Theorem~\ref{theorem:supersmooth_rate}, with Steps~1--3 carrying over verbatim. The key differences arise in Step~4, where the polynomial decay of the characteristic function under ordinary smoothness leads to polynomial rather than exponential bounds, and in Step~5, where the choice of truncation parameter and the subsequent rearrangement differ accordingly. We therefore focus on these two steps.

\emph{Step 4': Bounding via ordinary smoothness.\hspace{1em}} For the first term in Equation~\eqref{eq:sketch_two_terms}, we use $|\Phi_{p_G}(\xi) - \Phi_{p_{G'}}(\xi)| \leq \|p_G - p_{G'}\|_1$ and the lower-bound $|\Phi_\Sigma(\xi)| \geq (1 + c_1\|\xi\|^\beta)^{-1}$ from the ordinary-smoothness assumption to get
\[
\int_{\bar{B}(0,\Lambda)} \frac{|\Phi_{p_G}(\xi) - \Phi_{p_{G'}}(\xi)|}{|\Phi_\Sigma(\xi)|}\,d\xi \;\lesssim\; \|p_G - p_{G'}\|_1 \cdot \Lambda^{d+\beta}.
\]
For the second term, Assumption~\ref{ass2} gives
$$|\Phi_{\Sigma'}(\xi) - \Phi_\Sigma(\xi)| \lesssim \Xi(\|\Sigma - \Sigma'\|_{\mathrm{op}})\|\xi\|^p\max(|\Phi_\Sigma(\xi)|, |\Phi_{\Sigma'}(\xi)|).$$
Under ordinary smoothness, the ratio $\max(|\Phi_\Sigma(\xi)|, |\Phi_{\Sigma'}(\xi)|)/|\Phi_\Sigma(\xi)|$ is bounded above by $c_1/c_2$, uniformly in $\xi$, so that
\[
\int_{\bar{B}(0,\Lambda)} \frac{|\Phi_{\Sigma'}(\xi) - \Phi_\Sigma(\xi)|}{|\Phi_\Sigma(\xi)|}\,d\xi \;\lesssim\; \Xi(\|\Sigma-\Sigma'\|_{\mathrm{op}})\cdot \Lambda^{d+p}.
\]
Note that, in contrast to the super-smooth case, both bounds are now purely polynomial in $\Lambda$.

\emph{Step 5': Conclusion.\hspace{1em}} Substituting these bounds into Equation~\eqref{eq:sketch_split} and using Assumption~\ref{ass1} to replace $\Xi(\|\Sigma - \Sigma'\|_{\mathrm{op}})$ with $(\Xi \circ \Psi^{-1})(\|p_G - p_{G'}\|_1)$, we obtain
\[
W_1(P,P') \;\lesssim\; \frac{C_0}{\Lambda} + \|p_G - p_{G'}\|_1 \cdot \Lambda^{d+\beta} + (\Xi\circ\Psi^{-1})(\|p_G-p_{G'}\|_1)\cdot \Lambda^{d+p}.
\]
Setting $\Lambda \asymp W_1(P,P')^{-1}$ and rearranging, we obtain that either
\[
\|p_G - p_{G'}\|_1 \;\gtrsim\; W_1(P,P')^{d+\beta+1},
\]
or
\[
\|p_G - p_{G'}\|_1 \;\gtrsim\; (\Psi\circ\Xi^{-1})\!\left(C W_1(P,P')^{d+p+1}\right).
\]
These two cases, combined with Assumption~\ref{ass1}, give Equation~\eqref{eq:lower_bound_ordinary_smooth}. 
\end{proof}

\subsection{Kernel with PDE inversion} 

To remedy the dependence on $d$ in the inverse bound based on ordinary-smooth kernels, we now introduce the following PDE inversion assumption. Before proceeding, as we are about to work with differential operators defined in the weak functional-analytic sense of distributions, we refer the reader to, e.g., Chapter 9 of \cite{folland1999real} for a concise introduction to the topic.

\begin{definition}\label{def:PDE_inversion}
    For any $\nu = (\nu_1, \dots, \nu_d) \in \mathbb{N}^d$, set $|\nu| := \sum_{j=1}^d \nu_j$ and define the distributional derivative operator $\partial^\nu := \frac{\partial^{|\nu|}}{\partial x_1^{\nu_1} \dots \partial x_d^{\nu_d}}$. Then the family $\mathscr P$ satisfies a \emph{PDE inversion condition of order $\beta \in \mathbb{N}$} if there exists a linear differential operator
    $$ \mathcal{T}_\Sigma = \sum_{\nu\in\mathbb N^d \,:\, 0 \le |\nu| \le \beta} c_\nu(\Sigma) \partial^\nu$$
    such that
    \begin{enumerate}
        \item there exists $\nu\in\mathbb N^d$ with $|\nu|=\beta$ such that $c_{\nu}(\Sigma)\neq 0$,
        \item there exists a constant $L>0$, independent of $\nu$, such that the coefficients $\Sigma\mapsto c_\nu(\Sigma)$ are $L$-Lipschitz continuous from $\mathbb{S}^{+}_{d}(\lambda_{\min},\lambda_{\max})$ to $\mathbb R$, and
        \item the equation $\mathcal{T}_\Sigma p_{G} = P$ holds in the sense of distributions on $\mathbb{R}^d$. That is, for every test function $\phi \in C_c^\infty(\mathbb{R}^d)$, we have
        $$ \langle \mathcal{T}_\Sigma^* \phi, p_G \rangle = \int_{\mathbb{R}^d} \phi(x) \, P(dx), $$
        where $\mathcal{T}_\Sigma^* := \sum_{\nu\in\mathbb N^d \,:\,0 \le |\nu| \le \beta} (-1)^{|\nu|} c_\nu(\Sigma) \partial^\nu$ is the formal adjoint of $\mathcal{T}_\Sigma$, and $\langle \cdot, \cdot \rangle$ denotes the natural pairing between distributions and test functions. 
    \end{enumerate}
\end{definition}


To the best of our knowledge, the formulation of PDE inversion conditions of the above kind in the context of mixture model theory is new. As it turns out, conditions of this form are especially well-suited to our problem, as they provide a convenient way to invert the convolution $p_G(x)=P*f(x\mid\cdot,\Sigma)$ by means of a differential operator of finite order $\beta\in\mathbb N$, whose coefficients depend smoothly on the scale matrix $\Sigma$. Exploiting this relationship---which, as shown below, holds for Laplace mixtures\footnote{Though we do not pursue this here, the PDE inversion condition of Definition~\ref{def:PDE_inversion} is applicable to other kinds of mixtures, such as those driven by shifted exponential and fixed-shape Gamma kernels.}---allows one to relate differences between the mixture densities $p_G$ and $p_{G'}$ directly to differences between the corresponding mixing measures $P$ and $P'$ and the scale parameters $\Sigma$ and $\Sigma'$. Finally, we emphasize that the use of distribution theory is necessary here, since the mixing measure $P$ may be purely atomic, as is the case almost surely under a Dirichlet process prior, and differentiation must be interpreted in a weak sense via test functions $\phi\in C_c^\infty(\mathbb R^d)$. 

Before presenting an inverse bound based on the PDE inversion condition, we relate it to the ordinary smoothness of the kernel, thereby justifying its use as a substitute for obtaining sharper inequalities.

\begin{proposition}
\label{pro:PDE_implies_ordinary}
Suppose that $f(x \mid 0, \Sigma) = |\Sigma|^{-1/2} g(x^\top \Sigma^{-1} x)$ for some $g: [0,\infty) \to [0,\infty)$. If $\mathscr P$ satisfies the PDE inversion condition of even order $\beta \in \mathbb{N}$, then $f$ is ordinary-smooth of order $\beta$.
\end{proposition}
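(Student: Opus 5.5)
The plan is to take Fourier transforms in the PDE inversion identity applied to a single-atom mixing measure. This turns $\mathcal T_\Sigma$ into multiplication by a polynomial symbol whose reciprocal is exactly $\Phi_\Sigma$ up to a unimodular factor. Concretely, fix $\theta_0\in\mathcal X$ and $\Sigma\in\mathbb S_d^+(\lambda_{\min},\lambda_{\max})$, and take $G=\delta_{\theta_0}\times\delta_\Sigma\in\mathscr G$, so that $p_G=f(\cdot-\theta_0\mid 0,\Sigma)$. Condition 3 of Definition~\ref{def:PDE_inversion} says $\mathcal T_\Sigma p_G=\delta_{\theta_0}$ as tempered distributions; $p_G\in L^1$, so its distributional Fourier transform is a bounded continuous function. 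Taking Fourier transforms with the paper's sign convention gives
\[
m_\Sigma(\xi)\,e^{-i\xi^\top\theta_0}\,\overline{\Phi_\Sigma(\xi)} = e^{-i\xi^\top\theta_0},
\qquad m_\Sigma(\xi):=\sum_{|\nu|\le\beta}c_\nu(\Sigma)(i\xi)^\nu .
\]
Since $f(\cdot\mid 0,\Sigma)$ is even, $\Phi_\Sigma$ is real. Hence $m_\Sigma(\xi)\Phi_\Sigma(\xi)=1$ for all $\xi$, so $m_\Sigma$ is a real polynomial that never vanishes and $|\Phi_\Sigma|=1/|m_\Sigma|$. It therefore suffices to show $|m_\Sigma(\xi)|\asymp 1+\|\xi\|^\beta$ uniformly in $\Sigma$.

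\emph{Upper bound on $|m_\Sigma|$.} The coefficients $c_\nu$ are $L$-Lipschitz on the bounded set $\mathbb S_d^+(\lambda_{\min},\lambda_{\max})$, so they are uniformly bounded. This gives $|m_\Sigma(\xi)|\le C(1+\|\xi\|^\beta)$, and hence the lower bound $|\Phi_\Sigma(\xi)|\ge c_1(1+\|\xi\|^\beta)^{-1}$.

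\emph{Lower bound on $|m_\Sigma|$, using the elliptical structure.} Since $f(x\mid0,\Sigma)=|\Sigma|^{-1/2}g(x^\top\Sigma^{-1}x)$, we can write $\Phi_\Sigma(\xi)=\phi(\xi^\top\Sigma\xi)$ for a single function $\phi$. Thus $m_\Sigma(\xi)=\psi(\xi^\top\Sigma\xi)$ with $\psi=1/\phi$. Restricting to a ray $\xi=tv$ with $v^\top\Sigma v=1$ shows that $t\mapsto\psi(t^2)$ is a polynomial in $t$ and is even. Therefore $\psi$ is a polynomial in $s$ of some degree $k$, and $m_\Sigma(\xi)=\psi(\xi^\top\Sigma\xi)$ has degree exactly $2k$. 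Condition 1 says the degree-$\beta$ homogeneous part of $m_\Sigma$ is a nonzero polynomial, since the monomials $(i\xi)^\nu$ are linearly independent. It follows that $2k=\beta$, which is where evenness of $\beta$ enters. The top homogeneous part is then $a(\Sigma)(\xi^\top\Sigma\xi)^{\beta/2}$ with $a(\Sigma)\neq0$, so $|a(\Sigma)|(\xi^\top\Sigma\xi)^{\beta/2}\ge|a(\Sigma)|\lambda_{\min}^{\beta/2}\|\xi\|^\beta$. Combined with the bounded lower-order coefficients, this gives $|m_\Sigma(\xi)|\ge \tfrac12|a(\Sigma)|\lambda_{\min}^{\beta/2}\|\xi\|^\beta$ for $\|\xi\|\ge r_0$.

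\emph{Uniformity in $\Sigma$: the main obstacle.} I need $\inf_\Sigma|a(\Sigma)|>0$, so that $r_0$ can be chosen uniformly. I also need $\inf\{|m_\Sigma(\xi)|:\|\xi\|\le r_0,\ \Sigma\}>0$. For the first, $a(\Sigma)$ is a fixed linear combination of the $c_\nu(\Sigma)$ with $|\nu|=\beta$, so it is continuous in $\Sigma$. It is nonzero for each $\Sigma$. Working on the compact closure of $\mathbb S_d^+(\lambda_{\min},\lambda_{\max})$, where the Lipschitz $c_\nu$ extend, gives a positive minimum. If closedness is an issue, I would slightly enlarge the eigenvalue range. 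For the second, $(\xi,\Sigma)\mapsto m_\Sigma(\xi)$ is jointly continuous and nowhere zero, because $m_\Sigma\Phi_\Sigma\equiv1$. On the compact set $\bar B(0,r_0)\times\mathbb S_d^+(\lambda_{\min},\lambda_{\max})$ it is therefore bounded away from zero. Together these give $|m_\Sigma(\xi)|\ge c(1+\|\xi\|^\beta)$, hence $|\Phi_\Sigma(\xi)|\le c_2(1+\|\xi\|^\beta)^{-1}$, completing ordinary smoothness of order $\beta$.
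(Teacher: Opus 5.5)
Your proposal is correct and follows essentially the same route as the paper. You Fourier-transform $\mathcal T_\Sigma p_G=P$ at a Dirac mixing measure to get $|\Phi_\Sigma|=1/|m_\Sigma|$. You bound $|m_\Sigma|$ above using uniformly bounded coefficients. You bound it below by using elliptical symmetry to write $m_\Sigma$ as a polynomial of degree $\beta/2$ in $\xi^\top\Sigma\xi$ with a nonvanishing leading coefficient, made uniform in $\Sigma$ by compactness. Your use of $\delta_{\theta_0}$ with $\theta_0\in\mathcal X$ is slightly more careful than the paper's $\delta_0$.

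Two simplifications are available. On $\|\xi\|\le r_0$ you can use $|m_\Sigma|=1/|\Phi_\Sigma|\ge 1$ directly, as the paper does, instead of joint continuity. And since your $\phi$ does not depend on $\Sigma$, neither does $\psi=1/\phi$, so the leading coefficient $a$ is a constant and the uniformity step you flagged as the main obstacle disappears.
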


That is, when the index $\beta$ is even (as it is the case for Laplace mixtures, cf. Theorem~\ref{thm:inequality_laplace} below), the PDE condition is a proper strengthening of ordinary-smoothness of the kernel $f$. This, as we show next, plays a crucial role in sharpening the inverse bound obtained in Theorem~\ref{theorem:ordinary_rate}.

\begin{theorem}
    \label{theorem:sharpen_ordinary_rate}
    Under Assumption~\ref{ass1}, if $\mathscr P$ satisfies a PDE inversion condition of order $\beta$, there exist positive quantities $C$ and $C'$, depending only on $R$, $\lambda_{\min}$, $\lambda_{\max}$, and $d$, such that
    \begin{align}
       \|p_G &- p_{G'}\|_1 \geq C' \left[\Psi(\|\Sigma - \Sigma'\|_{\mathrm{op}}) \;+\; \min\left\{ W_1(P, P')^{\beta}, \;\Psi\big(C W_1(P, P')^{\beta}\big) \right\}\right], \label{eq:sharpen_lower_bound_ordinary_smooth}
    \end{align}
    for any $G=P\times \delta_\Sigma$ and $G'=P'\times \delta_{\Sigma'}$ in $\mathscr G$. 
\end{theorem}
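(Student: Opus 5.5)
We follow Steps 1--2 of the proof of Theorem~\ref{theorem:supersmooth_rate}, but replace the Fourier inversion in Steps 3--4 by a direct integration by parts through the PDE inversion condition. First, use Kantorovich--Rubinstein duality and the McShane--Whitney extension with the cutoff $\eta$ to obtain $\widetilde h^*_{\mathcal X}$. It is compactly supported in $B(0,2R)$, globally $(1+2RM_\eta)$-Lipschitz, and represents $W_1(P,P')$. Instead of a Jackson kernel, mollify with a smooth compactly supported kernel $K_\Lambda(x)=\Lambda^dK(\Lambda x)$, $K\in C_c^\infty$. This gives $\phi_\Lambda:=\widetilde h^*_{\mathcal X}*K_\Lambda\in C_c^\infty(\mathbb R^d)$, supported in $B(0,3R)$ for $\Lambda\ge 1/R$, with $\|\phi_\Lambda-\widetilde h^*_{\mathcal X}\|_\infty\lesssim \Lambda^{-1}$. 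As in Equation~\eqref{eq:sketch_split}, we get $W_1(P,P')\le C_0/\Lambda+\int\phi_\Lambda\,d(P-P')$.

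Because $\phi_\Lambda$ is a legitimate test function, condition 3 of Definition~\ref{def:PDE_inversion} gives
\[
\int\phi_\Lambda\,d(P-P')=\langle \mathcal T_\Sigma^*\phi_\Lambda,p_G\rangle-\langle\mathcal T_{\Sigma'}^*\phi_\Lambda,p_{G'}\rangle
=\langle \mathcal T_\Sigma^*\phi_\Lambda,p_G-p_{G'}\rangle+\langle(\mathcal T_\Sigma^*-\mathcal T_{\Sigma'}^*)\phi_\Lambda,p_{G'}\rangle.
\]
Since $p_G,p_{G'}\in L^1$, both pairings are ordinary integrals. By H\"older's inequality they are bounded by $\|\mathcal T_\Sigma^*\phi_\Lambda\|_\infty\|p_G-p_{G'}\|_1$ and $\|(\mathcal T^*_\Sigma-\mathcal T^*_{\Sigma'})\phi_\Lambda\|_\infty$ respectively. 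For the derivative bounds, write $\partial^\nu\phi_\Lambda=\partial^{\nu-e_j}(\partial_j\widetilde h^*_{\mathcal X})*K_\Lambda$, moving one derivative onto the Lipschitz function (so that $\|\partial_j\widetilde h^*\|_\infty\lesssim1$) and the rest onto the mollifier. This yields $\|\partial^\nu\phi_\Lambda\|_\infty\lesssim \Lambda^{|\nu|-1}$ for $|\nu|\ge1$, and $\|\phi_\Lambda\|_\infty\lesssim R$.

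The coefficients $c_\nu$ are Lipschitz on the compact set $\mathbb S_d^+(\lambda_{\min},\lambda_{\max})$, hence bounded. Combined with Lipschitz continuity, this gives
\[
\int\phi_\Lambda\,d(P-P')\lesssim \Lambda^{\beta-1}\|p_G-p_{G'}\|_1+L\|\Sigma-\Sigma'\|_{\mathrm{op}}\Lambda^{\beta-1}
\]
for $\Lambda\ge1$. The Lipschitz constant $L$ is taken with respect to the operator norm; any equivalent matrix norm changes only constants. By Assumption~\ref{ass1}, $\|\Sigma-\Sigma'\|_{\mathrm{op}}\le\Psi^{-1}(\|p_G-p_{G'}\|_1/C)$. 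Thus
\[
W_1\lesssim \Lambda^{-1}+\Lambda^{\beta-1}\big(\|p_G-p_{G'}\|_1+\Psi^{-1}(\|p_G-p_{G'}\|_1/C)\big).
\]

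Now choose $\Lambda= 2C_0/W_1(P,P')$. This is at least $1$ because $W_1\le 2R$, after adjusting constants. Absorbing the first term gives $W_1^{\beta}\lesssim \|p_G-p_{G'}\|_1+\Psi^{-1}(\|p_G-p_{G'}\|_1/C)$. So one of the two summands is at least $cW_1^\beta$. In the first case $\|p_G-p_{G'}\|_1\gtrsim W_1^\beta$. In the second case, monotonicity of $\Psi$ gives $\|p_G-p_{G'}\|_1\ge C\Psi(cW_1^\beta)$. Taking the minimum and adding the bound from Assumption~\ref{ass1}, with the sum bounded by twice the maximum, yields Equation~\eqref{eq:sharpen_lower_bound_ordinary_smooth}.

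The main obstacle is the sharp derivative estimate, namely obtaining $\Lambda^{\beta-1}$ rather than $\Lambda^\beta$ for the order-$\beta$ derivatives of $\phi_\Lambda$. This requires that $\widetilde h^*_{\mathcal X}$ have an essentially bounded weak gradient (Rademacher) that commutes with convolution. The bookkeeping is also delicate: the constants must stay uniform over $\mathscr G$ and independent of the particular $h^*$. The distributional pairing itself is harmless because $\phi_\Lambda\in C_c^\infty$ and the densities are integrable.
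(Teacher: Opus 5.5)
Your proposal is correct and follows essentially the same route as the paper's proof. The shared steps are: duality with the McShane--Whitney extension and cutoff, mollification by a $C_c^\infty$ kernel, the distributional splitting into $\langle \mathcal{T}^*_\Sigma\phi_\Lambda, p_G-p_{G'}\rangle$ and $\langle(\mathcal{T}^*_\Sigma-\mathcal{T}^*_{\Sigma'})\phi_\Lambda, p_{G'}\rangle$, and the bound $\|\partial^\nu\phi_\Lambda\|_\infty\lesssim\Lambda^{|\nu|-1}$ obtained by moving one derivative onto the Lipschitz function. The only difference is that you enlarge $C_0$ to at least $R$ so that $\Lambda=2C_0/W_1(P,P')\ge 1$ always; this legitimately replaces the paper's two-case split ($W_1\ge 2C_1'$ with $\Lambda=1$, versus $W_1<2C_1'$).
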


Theorem~\ref{theorem:sharpen_ordinary_rate} improves upon Theorem~\ref{theorem:ordinary_rate} by removing the explicit dependence on the dimension $d$ in the exponent. That is, while the bound in Theorem~\ref{theorem:ordinary_rate} describes a dimension-dependent relationship under simple ordinary-smoothness, the PDE inversion condition yields, remarkably, a dimension-free rate governed solely by the smoothness order $\beta$. This shows that, for mixtures satisfying this condition, the efficiency in recovering the mixing measure is determined by the smoothness properties of the kernel (with higher smoothness leading to weaker bounds) rather than by the ambient dimension.

\begin{proof}[Proof sketch of Theorem~\ref{theorem:sharpen_ordinary_rate}]
The proof shares Step~1 with Theorems~\ref{theorem:supersmooth_rate} and~\ref{theorem:ordinary_rate}, producing a compactly supported, globally Lipschitz continuous function $\widetilde{h}^*_{\mathcal{X}} \in L^1(\mathbb{R}^d)$ satisfying $W_1(P,P') = \int_{\mathbb{R}^d}\widetilde{h}^*_{\mathcal{X}}(\theta)\,(P(d\theta)-P'(d\theta))$. The key departure begins in Step~2'', where we use a smooth mollifier rather than a Jackson kernel to construct the approximation $\widetilde{h}^*_{\mathcal{X},\Lambda}$, which grants additional regularities needed to invoke the PDE inversion condition. Steps~3''--5'' then exploit this condition directly in the spatial domain, bypassing Fourier inversion entirely and leading to dimension-free bounds.

\emph{Step 2'': Mollifier approximation.\hspace{1em}} Rather than convolving with a Jackson kernel as in the previous proofs, we convolve $\widetilde{h}^*_{\mathcal{X}}$ with a scaled mollifier $\psi_\Lambda(x) := \Lambda^d\psi(\Lambda x)$, where $\psi(x) = C_\psi^{-1}\varrho(x)$ is a normalized bump function with $C_\psi := \int_{\mathbb{R}^d}\varrho(x)\,dx$ and
\[
\varrho(x) = \begin{cases} \exp\!\left(-\dfrac{1}{1-\|x\|^2}\right) & \text{if } \|x\| < 1, \\ 0 & \text{if } \|x\| \geq 1, \end{cases}
\]
so that $\psi \in C_c^\infty(\mathbb{R}^d)$ and $\int_{\mathbb{R}^d}\psi(x)\,dx = 1$. We then define
\[
\widetilde{h}^*_{\mathcal{X},\Lambda}(x) := (\widetilde{h}^*_{\mathcal{X}} * \psi_\Lambda)(x) = \int_{\mathbb{R}^d} \widetilde{h}^*_{\mathcal{X}}(x-y)\,\psi_\Lambda(y)\,dy.
\]
The resulting function $\widetilde{h}^*_{\mathcal{X},\Lambda}$ belongs to $C_c^\infty(\mathbb{R}^d)$, which makes it an admissible test function for the distributional PDE inversion condition. The Lipschitz continuity of $\widetilde{h}^*_{\mathcal{X}}$ again implies $\|\widetilde{h}^*_{\mathcal{X},\Lambda} - \widetilde{h}^*_{\mathcal{X}}\|_\infty \leq (1+2RM_\eta)M_\psi/\Lambda$, where $M_\psi := \int_{\mathbb{R}^d}\|x\|\psi(x)\,dx < \infty$, and combined with $\sup_{A\in\mathscr{B}(\mathbb{R}^d)}|P(A)-P'(A)| \leq 2$ this yields
\begin{equation}
\label{eq:sketch_split_pde}
W_1(P, P') \;\leq\; \frac{C_0}{\Lambda} + \int_{\mathbb{R}^d} \widetilde{h}^*_{\mathcal{X},\Lambda}(\theta)\,(P(d\theta) - P'(d\theta))
\end{equation}
for a constant $C_0>0$ depending only on $R$.

\emph{Step 3'': PDE inversion.\hspace{1em}} Since $\mathcal{T}_\Sigma p_G = P$ and $\mathcal{T}_{\Sigma'} p_{G'} = P'$ hold in the sense of distributions, and $\widetilde{h}^*_{\mathcal{X},\Lambda} \in C_c^\infty(\mathbb{R}^d)$ is an admissible test function, we may write
\begin{align*}
\int_{\mathbb{R}^d} \widetilde{h}^*_{\mathcal{X},\Lambda}(\theta)\,(P(d\theta) - P'(d\theta)) &= \langle \widetilde{h}^*_{\mathcal{X},\Lambda},\, \mathcal{T}_\Sigma p_G - \mathcal{T}_{\Sigma'} p_{G'} \rangle \\
&= \langle \mathcal{T}^*_\Sigma \widetilde{h}^*_{\mathcal{X},\Lambda},\, p_G - p_{G'} \rangle + \langle (\mathcal{T}^*_\Sigma - \mathcal{T}^*_{\Sigma'})\widetilde{h}^*_{\mathcal{X},\Lambda},\, p_{G'} \rangle,
\end{align*}
where $\mathcal{T}^*_\Sigma := \sum_{0 \leq |\nu| \leq \beta} (-1)^{|\nu|} c_\nu(\Sigma)\partial^\nu$ is the formal adjoint of $\mathcal{T}_\Sigma$, and we added and subtracted $\mathcal{T}_\Sigma p_{G'}$ to isolate the two sources of discrepancy. We label the two resulting terms $I_1$ (observation penalty) and $I_2$ (scale mismatch).

\emph{Step 4'': Bounding $I_1$ and $I_2$.\hspace{1em}} For $I_1$, Hölder's inequality gives $|I_1| \leq \|p_G - p_{G'}\|_1\|\mathcal{T}^*_\Sigma \widetilde{h}^*_{\mathcal{X},\Lambda}\|_\infty$. To bound $\|\mathcal{T}^*_\Sigma \widetilde{h}^*_{\mathcal{X},\Lambda}\|_\infty$, we use the fact that $\widetilde{h}^*_{\mathcal{X},\Lambda} = \widetilde{h}^*_{\mathcal{X}} * \psi_\Lambda$ to transfer one derivative onto $\widetilde{h}^*_{\mathcal{X}}$ and the remaining ones onto $\psi_\Lambda$. Young's inequality and the scaling identity $\|\partial^\nu \psi_\Lambda\|_1 = \Lambda^{|\nu|-1}\|\partial^\nu\psi\|_1$ then yield $\|\partial^\nu \widetilde{h}^*_{\mathcal{X},\Lambda}\|_\infty \lesssim \Lambda^{|\nu|-1}$ for all $\nu\in\mathbb N^d$ with $1 \leq |\nu| \leq \beta$. Since the coefficients $c_\nu(\Sigma)$ are uniformly bounded over $\Sigma \in \mathbb{S}^+_d(\lambda_{\min},\lambda_{\max})$, we conclude
\[
|I_1| \;\lesssim\; (1 + \Lambda^{\beta-1})\|p_G - p_{G'}\|_1.
\]
For $I_2$, we use $\|p_{G'}\|_1 = 1$ together with the $L$-Lipschitz continuity of $\Sigma\mapsto c_\nu(\Sigma)$ from the PDE inversion condition, which gives $|c_\nu(\Sigma) - c_\nu(\Sigma')| \leq L\|\Sigma - \Sigma'\|_{\mathrm{op}}$. Applying the same derivative bounds as for $I_1$, we obtain
\[
|I_2| \;\lesssim\; (1+\Lambda^{\beta-1})\|\Sigma - \Sigma'\|_{\mathrm{op}}.
\]
Note that both bounds are now dimension-free, in contrast to those obtained in the proof of Theorem~\ref{theorem:ordinary_rate}.

\emph{Step 5'': Conclusion.\hspace{1em}} Substituting the bounds on $I_1$ and $I_2$ into Equation~\eqref{eq:sketch_split_pde} and using Assumption~\ref{ass1} to replace $\|\Sigma - \Sigma'\|_{\mathrm{op}}$ with $\Psi^{-1}(\|p_G - p_{G'}\|_1)$, we obtain
\[
W_1(P,P') \;\lesssim\; \frac{C_0}{\Lambda} + (1+\Lambda^{\beta-1})\|p_G - p_{G'}\|_1 + (1+\Lambda^{\beta-1})\Psi^{-1}(\|p_G-p_{G'}\|_1).
\]
Two cases arise depending on the size of $W_1(P,P')$. If $W_1(P,P')$ is bounded away from zero, setting $\Lambda = 1$ and rearranging yields $\|p_G - p_{G'}\|_1 \gtrsim \min\{W_1(P,P')^\beta,\,\Psi(CW_1(P,P')^\beta)\}$ directly. If instead $W_1(P,P')$ is small, setting $\Lambda \asymp W_1(P,P')^{-1}$ and rearranging leads to the same bound. In both cases, combining with Assumption~\ref{ass1} gives Equation~\eqref{eq:sharpen_lower_bound_ordinary_smooth}.
\end{proof}

\section{Examples}\label{sec:examples}

We now specialize the general inequalities from Section~\ref{sec:general_inequalities} to specific choices of the mixture kernel $f$. In particular, we focus on three canonical cases, namely Gaussian, Cauchy, and Laplace kernels, which exemplify the different assumptions considered in our theory.

\subsection{Gaussian kernel}

The multivariate Gaussian kernel, which is arguably the most widely adopted choice in applied mixture modeling, is given by
$$f(x\mid\theta, \Sigma) = \frac{\exp\left( -\frac{1}{2} (x - \theta)^\top \Sigma^{-1} (x - \theta) \right)}{(2\pi)^{\frac{d}{2}} |\Sigma|^{\frac{1}{2}}}, \quad x \in \mathbb{R}^d,$$
for some $(\theta, \Sigma) \in \mathcal X \times \mathbb S_d^+(\lambda_{\min},\lambda_{\max})$. The following theorem shows that it belongs to the class of super-smooth kernels and provides the corresponding explicit inverse bound.

\begin{theorem}\label{thm:inequality_gaussian}
    The multivariate Gaussian kernel is super-smooth of order $2$. Moreover, if $f$ is Gaussian, there exists a constant $M>0$, depending only on $R$, $\lambda_{\min}$, and $\lambda_{\max}$, such that Assumptions~\ref{ass1} and~\ref{ass2} hold with
    \begin{align*}
        \Psi(t) = \exp\left( -M \frac{\log (1/t)}{t} \right), \quad \Xi(t)=t, \quad\text{and}\quad p=2.
    \end{align*}
    Consequently,
    \begin{align*}
        \|p_G  - p_{G'}\|_1 \;\geq\;C'\left[\exp\left( - \frac{C\exp\big(\tilde C/W_1(P, P')^2\big)}{W_1(P, P')^2} \right) \;+\; \exp\left(-M \frac{\log(1/\|\Sigma - \Sigma'\|_{\mathrm{op}})}{\|\Sigma - \Sigma'\|_{\mathrm{op}}} \right)\right],
    \end{align*}
    for any $G=P\times \delta_\Sigma$ and $G'=P'\times \delta_{\Sigma'}$ in $\mathscr G$, where $C,C',\tilde C>0$ depend on $R$, $\lambda_{\min}$, $\lambda_{\max}$, and $d$. 
\end{theorem}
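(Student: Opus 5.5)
The plan has three parts. First verify super-smoothness and Assumption~\ref{ass2} by direct computation. Then prove Assumption~\ref{ass1} with the stated $\Psi$, which is the real work. Finally substitute everything into Theorem~\ref{theorem:supersmooth_rate} and simplify the minimum.

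\emph{Super-smoothness and Assumption~\ref{ass2}.} Here $\Phi_\Sigma(\xi)=\exp(-\xi^\top\Sigma\xi/2)$. Since $\lambda_{\min}\|\xi\|^2\le \xi^\top\Sigma\xi\le\lambda_{\max}\|\xi\|^2$, super-smoothness of order $2$ holds with $c_1=\lambda_{\max}/2$ and $c_2=\lambda_{\min}/2$. For Assumption~\ref{ass2}, use the elementary inequality $|e^{-a}-e^{-b}|\le|a-b|\max\{e^{-a},e^{-b}\}$ for $a,b\ge0$ (mean value theorem). With $a=\xi^\top\Sigma\xi/2$ and $b=\xi^\top\Sigma'\xi/2$ we have $|a-b|\le\tfrac12\|\Sigma-\Sigma'\|_{\mathrm{op}}\|\xi\|^2$. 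This gives $\Xi(t)=t$ and $p=2$.

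\emph{Assumption~\ref{ass1} (main obstacle).} Let $t=\|\Sigma-\Sigma'\|_{\mathrm{op}}$ and choose a unit $v$ with $|v^\top(\Sigma-\Sigma')v|=t$. Project onto $v$. Since the pushforward under $x\mapsto v^\top x$ is an $L^1$ contraction, it suffices to treat one-dimensional mixtures $Q*N(0,s)$ and $Q'*N(0,s')$, where $Q,Q'$ are supported in $[-R,R]$, $s-s'=t>0$ without loss of generality, and $\delta:=\|p_G-p_{G'}\|_1$. The Fourier transforms satisfy $|\Phi_Q(\xi)e^{-s\xi^2/2}-\Phi_{Q'}(\xi)e^{-s'\xi^2/2}|\le\delta$. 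Multiplying by $e^{s'\xi^2/2}$ shows that the entire function
\[
F(z):=\Phi_{Q'}(z)-\Phi_Q(z)e^{-tz^2/2}
\]
obeys $|F(x)|\le\min\{2,\delta e^{\lambda_{\max}x^2/2}\}$ on the real axis. Hence $|F|\le\sqrt\delta$ on a real segment $|x|\le A$ with $A^2\asymp\log(1/\delta)$.

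On the imaginary axis the compact support gives $\Phi_{Q'}(iy)\le e^{R|y|}$ and $\Phi_Q(iy)\ge e^{-R|y|}$. Therefore $|F(iy)|\ge e^{ty^2/2-R|y|}-e^{R|y|}$, which is large once $y\gtrsim R/t$. Globally, $|F(z)|\lesssim e^{R|z|+t|z|^2/2}$. I would then apply the two-constants theorem (harmonic-measure estimate) in a disk or slit domain. The segment $[-A,A]$ is the small set, and the point $iy_0$ with $y_0\asymp R/t$ is where $F$ must be large. This should yield
\[
\log(1/\delta)\lesssim \frac{\log(1/t)}{t},
\]
that is, $\delta\ge\exp(-M\log(1/t)/t)$. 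Tuning the domain, $A$ and $y_0$ so that the harmonic measure of the segment balances the growth bound is the delicate step, and it is where the $\log(1/t)$ factor should come from.

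If the complex-analytic balance proves too lossy, I would instead try a real-variable route. Write $e^{s'\xi^2/2}\Phi_{Q'}$ as a function whose Taylor coefficients are controlled by the moments of $Q'$, which are at most $R^k$. Then use a Hermite or moment-matching argument up to order $k\asymp 1/t$ to reach the same form. For $t$ bounded away from $0$, a compactness and identifiability argument suffices: Gaussian mixtures with compactly supported locations and distinct common covariance are distinct.

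\emph{Conclusion.} Since $\Xi$ is the identity, $\Psi\circ\Xi^{-1}=\Psi$. Setting $\varepsilon=\widetilde C\exp(-C/w^2)$ with $w=W_1(P,P')\le 2R$, we get $\log(1/\varepsilon)\asymp 1/w^2$ and $1/\varepsilon\asymp e^{C/w^2}$. Hence
\[
\Psi(\varepsilon)=\exp\!\big(-C''e^{C/w^2}/w^2\big),
\]
which is at most $\exp(-C/w^2)$ after adjusting constants, so it is the active term in the minimum of Theorem~\ref{theorem:supersmooth_rate}. Adding the $\Psi(\|\Sigma-\Sigma'\|_{\mathrm{op}})$ term then gives the stated bound.
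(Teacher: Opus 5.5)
Your proposal is correct, and for the main step, Assumption~\ref{ass1}, it takes a genuinely different route from the paper. The rest coincides with the paper: super-smoothness, Assumption~\ref{ass2} via the mean value theorem, the projection onto a top eigenvector of $\Sigma-\Sigma'$ with $L^1$ contraction, and the appeal to Theorem~\ref{theorem:supersmooth_rate} with $\Psi\circ\Xi^{-1}=\Psi$. Your explicit simplification of the minimum is a step the paper leaves implicit.

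\textbf{The paper's route (real variables).} By the semigroup property it writes $p_G=\tilde P*f(\cdot\mid 0,\sigma'^2)$ with $\tilde P=P*f(\cdot\mid 0,\gamma^2)$. It recovers the $2k$-th moments of $\tilde P$ and $P'$ from $p_G-p_{G'}$ via the Hermite polynomials $H_{2k}(x;\sigma'^2)$, truncating at $|x|=R+L$ and controlling Gaussian tails. It then plays the moment gap $\gamma^{2k}(2k-1)!!-R^{2k}$ against $V(L^{2k}+k^k+R^{2k})$. Choosing $L\asymp\sqrt{\log(1/V)}$ and $k\asymp\log(1/V)/\log\log(1/V)$ gives $\gamma^2\lesssim\log\log(1/V)/\log(1/V)$, with the logarithmic loss coming from $k^k$.

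\textbf{Your route (entire functions).} It carries the same information in dual form. The growth $e^{ty^2/2}$ versus $e^{R|y|}$ of $F$ on the imaginary axis is the generating-function version of that moment gap. Analytic continuation off the real segment replaces the Hermite identity and the tail truncation.

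\textbf{The step you flag as delicate does close.}
\begin{itemize}
\item Take $y_0=cR/t$ with $c$ large, $r=8y_0$, and replace $A$ by $\min\{A,y_0\}$.
\item Compare $D_r\setminus[-A,A]$ with the region between the slit and the Joukowski ellipse $\{\frac{A}{2}(w+w^{-1}):|w|=r/A\}$, which lies inside $D_r$.
\item This shows the harmonic measure of the slit at $iy_0$ is $\gtrsim 1/\log(y_0/A)\gtrsim 1/\log(1/t)$, once $\delta$ is small enough that $A\geq 1$ (otherwise the bound is trivial).
\item Meanwhile $\log\max_{|z|=r}|F|\lesssim R^2/t$, so the two-constants theorem gives $\log(1/\delta)\lesssim\log(1/t)/t$.
\end{itemize}
The $\log(1/t)$ factor arises exactly where you predicted. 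Enlarging $c$ handles $t$ of order one as well, so the compactness argument is optional.

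\textbf{What each route buys.} The paper's route is elementary, gives explicit constants, and fits the moment-matching tradition in the mixture literature. Yours is conceptually shorter and explains the logarithmic loss geometrically. It does, however, rest on a harmonic-measure estimate that you still need to write out. Your fallback Hermite/moment-matching route is essentially the paper's proof.
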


As a preview of the posterior contraction results in Section~\ref{sec:posterior_rates}, Theorem~\ref{thm:inequality_gaussian} shows how an $L^1$ estimation rate $\omega_n$ for $p_{G_0}$ propagates to $P_0$ and $\Sigma_0$. In particular, one obtains
\begin{equation*}
    W_1(P,P_0) \lesssim \left( \log\log(1/\omega_n) \right)^{-1/2}, \quad \|\Sigma - \Sigma_0\|_{\mathrm{op}} \lesssim \frac{\log\log(1/\omega_n)}{\log(1/\omega_n)}.
\end{equation*}
These rates are quite slow, reflecting the strong smoothing effect induced by the Gaussian kernel. This will serve as a benchmark when comparing with other kernel choices.

Before moving on to other kernel types, we briefly focus on the Gaussian setting where the scale matrices under consideration are of the form $\Sigma=\sigma^2 I_d$, so that $\mathscr G$ consists of mixing measures  $G=P\times\delta_{\sigma^2I_d}$ with $\sigma^2\in[\lambda_{\min},\lambda_{\max}]$. We refer to this as an \emph{isotropic Gaussian mixture}. In this case, the next result improves the $W_1$ rate dependence from double-logarithmic to logarithmic. From a technical point of view, the proof is not significantly different from the general super-smooth case and only exploits the additional semi-group structure of the Gaussian kernel family.

\begin{theorem}\label{thm:inequality_gaussian_isotropic}
    For an isotropic Gaussian mixture, there exist positive constants $M,C$, and $C'$, depending only on $R$, $\lambda_{\min}$, and $\lambda_{\max}$, and $d$, such that
    \begin{align*}
        \|p_G  - p_{G'}\|_1 \;\geq\;C'\left[\exp\left( - \frac{C}{W_1(P, P')^2} \right) \;+\; \exp\left(-M \frac{\log(1/|\sigma^2 - \sigma'^{2}|)}{|\sigma^2 - \sigma'^2|} \right)\right],
    \end{align*}
    for any $G=P\times \delta_{\sigma^2I_d}$ and $G'=P'\times \delta_{\sigma'^2 I_d}$ in $\mathscr G$. 
\end{theorem}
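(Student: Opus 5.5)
The plan is to follow the five-step scheme of Theorem~\ref{theorem:supersmooth_rate}, but to exploit the Gaussian semigroup structure. Writing $p_G = P * \phi_{\sigma^2}$, where $\phi_{s}$ is the density of $N(0,sI_d)$, we never divide by $\Phi_{\sigma^2 I_d}$ and $\Phi_{\sigma'^2 I_d}$ separately. Instead we compare both mixtures after a common extra smoothing, so that the scale mismatch enters only through a Gaussian factor bounded by one.

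The $\Psi$ term is inherited from Theorem~\ref{thm:inequality_gaussian}. Assumption~\ref{ass1} with $\Psi(t)=\exp(-M\log(1/t)/t)$ holds a fortiori on the isotropic subclass, and $\|\sigma^2I_d-\sigma'^2I_d\|_{\mathrm{op}}=|\sigma^2-\sigma'^2|$. It therefore suffices to prove
\[
\|p_G-p_{G'}\|_1\gtrsim \exp(-C/W_1(P,P')^2)
\]
and then add the two bounds, halving constants.

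\emph{Step 1.} Reuse Steps~1--3 of the sketch for Theorem~\ref{theorem:supersmooth_rate}. This gives
\[
W_1(P,P')\le C_0/\Lambda+(2\pi)^{-d}\int_{\bar B(0,\Lambda)}\mathcal F[\widetilde h^*_{\mathcal X,\Lambda}](\xi)\bigl(\Phi_P(\xi)-\Phi_{P'}(\xi)\bigr)\,d\xi,
\]
with $|\mathcal F[\widetilde h^*_{\mathcal X,\Lambda}]|\le \|\widetilde h^*_{\mathcal X}\|_1\lesssim 1$.

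\emph{Step 2.} Assume without loss of generality that $\sigma'^2\ge\sigma^2$; the argument is symmetric. Using $\Phi_{\sigma^2I_d}(\xi)=e^{-\sigma^2\|\xi\|^2/2}$, write
\[
\Phi_P-\Phi_{P'}=e^{\sigma'^2\|\xi\|^2/2}\bigl(\Phi_P e^{-\sigma'^2\|\xi\|^2/2}-\Phi_{P'}e^{-\sigma'^2\|\xi\|^2/2}\bigr).
\]
Here $\Phi_{P'}e^{-\sigma'^2\|\xi\|^2/2}=\Phi_{p_{G'}}$ and $\Phi_Pe^{-\sigma'^2\|\xi\|^2/2}=\Phi_{p_G}e^{-(\sigma'^2-\sigma^2)\|\xi\|^2/2}$. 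Hence
\[
|\Phi_P-\Phi_{P'}|\le e^{\lambda_{\max}\|\xi\|^2/2}\Bigl(|\Phi_{p_G}-\Phi_{p_{G'}}|+|\Phi_{p_G}|\bigl(1-e^{-(\sigma'^2-\sigma^2)\|\xi\|^2/2}\bigr)\Bigr).
\]
The first piece is at most $\|p_G-p_{G'}\|_1$. For the second, $|\Phi_{p_G}|\le e^{-\sigma^2\|\xi\|^2/2}$, so it is bounded by $e^{\lambda_{\max}\|\xi\|^2/2}\,|\sigma^2-\sigma'^2|\,\|\xi\|^2$.

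I will avoid the cross factor $e^{(\lambda_{\max}-\lambda_{\min})\Lambda^2}$ that caused the double exponential in Theorem~\ref{thm:inequality_gaussian}, because the $\Xi$ term then entered through $\Psi^{-1}$. The better way is to never pass through $\Psi^{-1}$. Both densities are convolved with $\phi_{\sigma'^2-\sigma^2}$ only on the side of $p_G$: note that $p_G*\phi_{\sigma'^2-\sigma^2}=P*\phi_{\sigma'^2}$. By the $L^1$ contraction of Gaussian convolution, the penalty is $\|p_G*\phi_{\sigma'^2-\sigma^2}-p_{G'}\|_1\le\|p_G-p_{G'}\|_1+\|p_G*\phi_{s}-p_G\|_1$ with $s=\sigma'^2-\sigma^2$.

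\emph{Main obstacle.} The term $\|p_G*\phi_s-p_G\|_1$ is only of order $s$, and $s$ is controlled by $\|p_G-p_{G'}\|_1$ only through $\Psi^{-1}$, which is far too weak. So I instead aim to bound $s$ polynomially by $\|p_G-p_{G'}\|_1$, which suffices for the claimed exponential rate. To get this, compare second moments: $\int\|x\|^2(p_G-p_{G'})\,dx=d(\sigma^2-\sigma'^2)+\int\|\theta\|^2(P-P')$. Alternatively, and I expect this to be the crux, use the Fourier pieces at a fixed frequency $\|\xi\|\asymp\Lambda$, where the exponential factors are $e^{\pm\lambda\Lambda^2}$ with $\lambda$ bounded by $\lambda_{\max}$. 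This yields either $\|p_G-p_{G'}\|_1\gtrsim e^{-C\Lambda^2}$ directly, or $s\lesssim e^{C\Lambda^2}\|p_G-p_{G'}\|_1$.

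In either case, plug in, choose $\Lambda\asymp W_1(P,P')^{-1}$, and absorb polynomial factors $\Lambda^{d+2}$ into $e^{c\Lambda^2}$. This gives $W_1\lesssim 1/\Lambda+e^{C\Lambda^2}\|p_G-p_{G'}\|_1$, hence $\|p_G-p_{G'}\|_1\gtrsim\exp(-C/W_1(P,P')^2)$. If $W_1$ is bounded below, take $\Lambda$ constant.
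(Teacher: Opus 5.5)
The step that fails is the one you call the crux: the claim that, with $\Lambda\asymp W_1(P,P')^{-1}$, either $\|p_G-p_{G'}\|_1\gtrsim e^{-C\Lambda^2}$ or $s\lesssim e^{C\Lambda^2}\|p_G-p_{G'}\|_1$. The same objection applies to any polynomial control of $s=|\sigma^2-\sigma'^2|$ by $V:=\|p_G-p_{G'}\|_1$. The reason is that a scale gap can be absorbed almost perfectly by spreading the location measure.

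Take $P'=\delta_0$, $\sigma'^2=\sigma^2+s$, and $P=P_k^{\otimes d}$, where $P_k$ is the $k$-point Gauss--Hermite rule for the univariate $N(0,s)$. This rule matches the first $2k-1$ moments of $N(0,s)$ and has nodes in $[-\sqrt{s(4k+2)},\sqrt{s(4k+2)}]$, so choosing $k\asymp R^2/(ds)$ keeps $\mathrm{supp}(P)\subseteq\bar B(0,R)$. Write $\phi_v$ for the univariate $N(0,v)$ density and $m_j(\mu)=\int t^j\mu(dt)$. Expanding $P_k*\phi_{\sigma^2}$ and $\phi_{\sigma^2+s}$ in Hermite polynomials against the reference $\phi_{\sigma^2}$ gives
\[
\|P_k*\phi_{\sigma^2}-\phi_{\sigma^2+s}\|_1^2\le\sum_{j\ge 2k}\frac{\bigl(m_j(P_k)-m_j(N(0,s))\bigr)^2}{\sigma^{2j}\,j!}\lesssim (Cs)^{2k}.
\]
Hence $V\lesssim d\,(Cs)^{k}\le\exp(-c\log(1/s)/s)$ for small $s$. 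At the same time $\sqrt{s/3}\le W_1(P,P')\le\sqrt{ds}$, where the lower bound follows from H\"older and the matched fourth moment.

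With $w=W_1(P,P')$, this reads $V\le\exp(-c'\log(1/w)/w^2)$. So neither branch of your dichotomy holds. Worse, the displayed bound $V\gtrsim\exp(-C/w^2)$ itself fails as $w\to0$, whatever route one takes. Your second-moment comparison cannot help either: in this example $d(\sigma^2-\sigma'^2)+\int\|\theta\|^2(P-P')(d\theta)=0$ exactly.

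What is provable is $V\gtrsim\exp(-M\log(1/W_1)/W_1^2)$. This is in fact what the paper's proof ends with (its final display), despite the form of the statement. Its mechanism differs from yours in a way that matters:
\begin{itemize}
\item With $\sigma^2>\sigma'^2$ and $\Delta=\sigma^2-\sigma'^2$, the paper pushes the gap into the mixing measure of the \emph{larger}-variance mixture, $\widetilde P=P*\phi_\Delta$. Then $p_G=\widetilde P*\phi_{\sigma'^2}$ exactly.
\item The Fourier step therefore divides only by $\Phi_{\sigma'^2I_d}$, with no scale-mismatch term. The cost is the additive, unamplified term $W_1(P,\widetilde P)\le\sqrt{d\Delta}$.
\item It then does go through $\Psi^{-1}$. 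Theorem~\ref{thm:inequality_gaussian} and Lemma~\ref{lemma:dung_a_small_estimation_from_xlogx} give $\Delta\lesssim\log\log(1/V)/\log(1/V)$.
\item Taking $\Lambda^2\asymp\log(1/V)$ then yields $W_1^2\lesssim\log\log(1/V)/\log(1/V)$.
\end{itemize}

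By contrast, smoothing the \emph{smaller}-variance side, as you do, puts the $O(s)$ error $\|p_G*\phi_s-p_G\|_1$ inside the term multiplied by $\Lambda^de^{\lambda_{\max}\Lambda^2/2}$. With the only valid control on $s$, this gives merely $W_1\lesssim(\log\log(1/V))^{-1/2}$, no better than Theorem~\ref{thm:inequality_gaussian}. To repair the proposal, switch to the paper's shift and aim for the bound with the extra $\log(1/W_1(P,P'))$ factor.
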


While Theorem~\ref{thm:inequality_gaussian_isotropic} leaves the estimation rate for $\Sigma_0=\sigma_0^2I_d$ unchanged relative to the fully general counterpart in Theorem~\ref{thm:inequality_gaussian}, it significantly improves the implied $W_1$ bound,
$$W_1(P,P_0) \lesssim \left(\log(1/\omega_n) \right)^{-1/2},$$
which will be reflected in our analysis of posterior contraction rates in Section~\ref{sec:posterior_rates}. We also trivially note that, for $d=1$, Proposition~\ref{thm:inequality_gaussian_isotropic} fully replaces Theorem~\ref{thm:inequality_gaussian} as a strictly tighter result.

\subsection{Cauchy kernel}

We next consider the multivariate Cauchy kernel,
$$f(x\mid\theta, \Sigma) = \frac{\Gamma\left(\frac{1+d}{2}\right)}{\pi^{\frac{d+1}{2}} |\Sigma|^{\frac{1}{2}} \left[ 1 + (x - \theta)^\top \Sigma^{-1} (x - \theta) \right]^{\frac{d+1}{2}}},$$
which leads to the following inverse bound.

\begin{theorem}\label{thm:inequality_cauchy}
    The multivariate Cauchy kernel is super-smooth of order $1$. Moreover, if $f$ is Cauchy, Assumptions~\ref{ass1} and~\ref{ass2} hold with
    \begin{align*}
        \Psi(t) = t^2, \quad \Xi(t)=\sqrt t, \quad\text{and}\quad p=1.
    \end{align*}
    Consequently,
    \begin{align*}
    \|p_G - p_{G'}\|_1 \;\geq\; C'\left[\exp\left( - \frac{C}{W_1(P, P')} \right) + \|\Sigma - \Sigma'\|_{\mathrm{op}}^2\right],
    \end{align*}
    for any $G=P\times \delta_\Sigma$ and $G'=P'\times \delta_{\Sigma'}$ in $\mathscr G$, where $C,C'>0$ may depend on $R$, $\lambda_{\min}$, $\lambda_{\max}$, and $d$.
\end{theorem}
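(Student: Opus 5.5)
The proof has three parts: super-smoothness, Assumption~\ref{ass2}, and Assumption~\ref{ass1}, after which Theorem~\ref{theorem:supersmooth_rate} gives the inequality.

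\textbf{Super-smoothness.} The multivariate Cauchy law with scale $\Sigma$ is the multivariate $t$ with one degree of freedom, so $\Phi_\Sigma(\xi)=\exp(-\sqrt{\xi^\top\Sigma\xi})$. Since $\lambda_{\min}\|\xi\|^2\le \xi^\top\Sigma\xi\le\lambda_{\max}\|\xi\|^2$, this gives super-smoothness of order $1$ with $c_1=\sqrt{\lambda_{\max}}$ and $c_2=\sqrt{\lambda_{\min}}$.

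\textbf{Assumption~\ref{ass2}.} Set $a=\sqrt{\xi^\top\Sigma\xi}$ and $b=\sqrt{\xi^\top\Sigma'\xi}$. Then $|e^{-a}-e^{-b}|\le |a-b|\,e^{-\min(a,b)}=|a-b|\max\{|\Phi_\Sigma|,|\Phi_{\Sigma'}|\}$. Moreover $|a-b|\le\sqrt{|a^2-b^2|}\le \sqrt{\|\Sigma-\Sigma'\|_{\mathrm{op}}}\,\|\xi\|$. This yields $\Xi(t)=\sqrt t$ and $p=1$.

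\textbf{Assumption~\ref{ass1}.} This is the main obstacle. I will use characteristic functions: $\Phi_{p_G}(\xi)=\Phi_P(\xi)e^{-\sqrt{\xi^\top\Sigma\xi}}$ and $|\Phi_{p_G}-\Phi_{p_{G'}}|\le\|p_G-p_{G'}\|_1$.

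\emph{Approach via tails.} Take the unit direction $v$ maximizing $|v^\top(\Sigma-\Sigma')v|$, and set $\xi=sv$ with $s$ small. Then $\Phi_P(sv)=1+i s\,v^\top m_P+O(s^2R^2)$. Taking moduli, $|\Phi_{p_G}(sv)|=|\Phi_P(sv)|e^{-s\sqrt{v^\top\Sigma v}}$, and $|\Phi_P(sv)|=1-O(s^2R^2)$. So
\[
\bigl|\,|\Phi_{p_G}(sv)|-|\Phi_{p_{G'}}(sv)|\,\bigr|\ge s\bigl|\sqrt{v^\top\Sigma v}-\sqrt{v^\top\Sigma' v}\bigr|e^{-s\sqrt{\lambda_{\max}}}-O(s^2R^2).
\]
The first term is $\gtrsim s\,\|\Sigma-\Sigma'\|_{\mathrm{op}}$ up to constants depending on $\lambda_{\max}$. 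Choosing $s\asymp\|\Sigma-\Sigma'\|_{\mathrm{op}}/R^2$, small enough to beat the quadratic remainder, gives $\|p_G-p_{G'}\|_1\gtrsim\|\Sigma-\Sigma'\|_{\mathrm{op}}^2$. Hence $\Psi(t)=t^2$.

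The delicate point is making the $O(s^2R^2)$ bound on $1-|\Phi_P(sv)|$ uniform over all $P$ supported in $\bar B(0,R)$. It follows from $1-|\Phi_P(\xi)|^2=\iint(1-\cos(\xi^\top(\theta-\theta')))\,P(d\theta)P(d\theta')\le 2\|\xi\|^2R^2$. I also need the constant in front of $s$ to be bounded away from zero uniformly; restricting to $s\le1$ handles this. If $\|\Sigma-\Sigma'\|_{\mathrm{op}}$ is large (bounded by $\lambda_{\max}$), the same choice still works after adjusting constants.

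\textbf{Conclusion.} Plugging into Theorem~\ref{theorem:supersmooth_rate}: $\Xi^{-1}(u)=u^2$, so $(\Psi\circ\Xi^{-1})(u)=u^4$. With $u=\widetilde C e^{-C/W_1}$ this gives $u^4=\widetilde C^4e^{-4C/W_1}$. The minimum is therefore $\gtrsim e^{-C''/W_1}$, using that $W_1\le 2R$ to absorb the constant $\widetilde C^4$ into the exponent. This yields the stated bound.
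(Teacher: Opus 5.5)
Your proposal is correct and follows essentially the same route as the paper: the same mean-value argument gives Assumption~2, and for Assumption~1 you use the same low-frequency characteristic-function comparison along the principal direction of $\Sigma-\Sigma'$, trading the linear-in-$s$ scale gap against an $O(s^2R^2)$ term from the mixing measure. The differences are minor. You compare moduli directly at $\xi=sv$ in $\mathbb{R}^d$, which removes the phase coming from the mean of $P$ and uses only the diameter of the support. The paper instead first projects to one dimension via the total-variation contraction lemma and compares real parts. You also spell out how $\widetilde C^4 e^{-4C/W_1}$ is absorbed into $e^{-C''/W_1}$, a step the paper leaves implicit.
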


In this case, an $L^1$ rate $\omega_n$ for $p_{G_0}$ translates into
\begin{equation*}
    W_1(P,P_0) \lesssim \left(\log(1/\omega_n)\right)^{-1}, \quad \|\Sigma - \Sigma_0\|_{\mathrm{op}} \lesssim \omega_n^{1/2}.
\end{equation*}
Compared to the Gaussian setting, both rates are faster. This is consistent with the lower degree of smoothness of the Cauchy kernel ($\alpha=1$), which reduces the loss of information induced by convolving $G$ with $f$ and makes the inversion problem statistically easier.

\subsection{Laplace kernel}

Finally, we consider the multivariate Laplace kernel
\begin{align*}
    f(x \mid\theta, \Sigma) = \frac{2\left( \frac{(x - \theta)^\top \Sigma^{-1} (x - \theta)}{2} \right)^{\frac{2-d}{2}} B_{\frac{d-2}{2}}\left( \sqrt{2 (x - \theta)^\top \Sigma^{-1} (x - \theta)} \right)}{(2\pi)^{\frac{d}{2}} |\Sigma|^{\frac{1}{2}}},
\end{align*}
where $B_{\kappa}$ denotes the modified Bessel function of the second kind. As previewed in Section~\ref{sec:general_inequalities}, Laplace mixtures are ordinary-smooth and satisfy the PDE inversion condition introduced in Definition~\ref{def:PDE_inversion}, which allows one to obtain the following inverse bound.

\begin{theorem}\label{thm:inequality_laplace}
    The multivariate Laplace kernel is ordinary-smooth of order $2$. Moreover, if $f$ is Laplace, Assumptions~\ref{ass1} and~\ref{ass2} hold with
    \begin{align*}
        \Psi(t) = \Xi(t)=t \quad\text{and}\quad p=2,
    \end{align*}
    and $\mathscr P$ satisfies a PDE inversion condition of order $2$ for the operator
    $$\mathcal{T}_\Sigma = \mathrm{Id} - \frac{1}{2} \sum_{j=1}^d \sum_{k=1}^d \Sigma_{jk} \frac{\partial^2}{\partial x_j \partial x_k},$$
    where $\mathrm{Id}$ denotes the identity operator. Consequently,
    \begin{align*}
       \|p_G - p_{G'}\|_1 \;\geq\; C'\left[W_1(P, P')^{2} + \|\Sigma - \Sigma'\|_{\mathrm{op}}\right],
    \end{align*}
    for any $G=P\times \delta_\Sigma$ and $G'=P'\times \delta_{\Sigma'}$ in $\mathscr G$, where $C'
    >0$ may depend on $R$, $\lambda_{\min}$, $\lambda_{\max}$, and $d$. 
\end{theorem}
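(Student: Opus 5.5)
The plan has three layers. First I verify the kernel-specific facts: ordinary smoothness, Assumption~\ref{ass2} and the PDE inversion condition. These are explicit Fourier computations. Second, I establish Assumption~\ref{ass1} with $\Psi(t)=t$, which I expect to be the main obstacle. Third, I plug everything into Theorem~\ref{theorem:sharpen_ordinary_rate} with $\beta=2$. In that last step $\min\{W_1(P,P')^2,\Psi(CW_1(P,P')^2)\}\asymp W_1(P,P')^2$, which gives the stated inequality immediately.

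For the first layer, I use the standard fact that the multivariate Laplace kernel above has characteristic function $\Phi_\Sigma(\xi)=(1+\tfrac12\xi^\top\Sigma\xi)^{-1}$.

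\emph{Ordinary smoothness.} Since $\lambda_{\min}\|\xi\|^2\le \xi^\top\Sigma\xi\le\lambda_{\max}\|\xi\|^2$, we get $c_1(1+\|\xi\|^2)^{-1}\le\Phi_\Sigma(\xi)\le c_2(1+\|\xi\|^2)^{-1}$, so the kernel is ordinary-smooth of order $2$.

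\emph{Assumption~\ref{ass2}.} The exact identity
\[
\Phi_{\Sigma'}(\xi)-\Phi_\Sigma(\xi)=\tfrac12\,\xi^\top(\Sigma-\Sigma')\xi\,\Phi_\Sigma(\xi)\Phi_{\Sigma'}(\xi)
\]
holds. Together with $\Phi_\Sigma\le1$, it yields $|\Phi_{\Sigma'}(\xi)-\Phi_\Sigma(\xi)|\le\tfrac12\|\Sigma-\Sigma'\|_{\mathrm{op}}\|\xi\|^2\max\{\Phi_\Sigma(\xi),\Phi_{\Sigma'}(\xi)\}$. This is Assumption~\ref{ass2} with $\Xi(t)=t$ and $p=2$.

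\emph{PDE inversion.} In the paper's convention, the symbol of $\mathcal T_\Sigma$ is $1+\tfrac12\xi^\top\Sigma\xi=1/\Phi_\Sigma(\xi)$. Hence, as tempered distributions, $\widehat{\mathcal T_\Sigma p_G}=(1+\tfrac12\xi^\top\Sigma\xi)\Phi_\Sigma\Phi_P=\Phi_P$, i.e. $\mathcal T_\Sigma p_G=P$. Pairing with $\phi\in C_c^\infty$ gives condition~3 of Definition~\ref{def:PDE_inversion}. For condition~1, the diagonal coefficients are $-\tfrac12\Sigma_{jj}\le-\lambda_{\min}/2<0$. For condition~2, $|\Sigma_{jk}-\Sigma'_{jk}|=|e_j^\top(\Sigma-\Sigma')e_k|\le\|\Sigma-\Sigma'\|_{\mathrm{op}}$, so the coefficients are $\tfrac12$-Lipschitz.

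For Assumption~\ref{ass1}, I reuse the PDE with test functions supported away from $\bar B(0,R)$. Let $\phi\in C_c^\infty$ be supported in the annulus $A=\{2R\le\|x\|\le 3R\}$. Because $P$ and $P'$ vanish there, $\langle\mathcal T_\Sigma^*\phi,p_G\rangle=0=\langle\mathcal T_{\Sigma'}^*\phi,p_{G'}\rangle$. Subtracting gives
\[
\langle \mathcal T_\Sigma^*\phi,\,p_G-p_{G'}\rangle=\tfrac12\sum_{j,k}(\Sigma-\Sigma')_{jk}\int\partial_j\partial_k\phi\,p_{G'}\,dx=\tfrac12\int\phi\,\mathrm{tr}\big((\Sigma-\Sigma')\nabla^2p_{G'}\big)\,dx .
\]
The left side is at most $\|\mathcal T_\Sigma^*\phi\|_\infty\|p_G-p_{G'}\|_1$. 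Writing $\Delta=(\Sigma-\Sigma')/\|\Sigma-\Sigma'\|_{\mathrm{op}}$, it therefore suffices to find finitely many fixed test functions $\phi_1,\dots,\phi_m$ such that
\[
\inf_{P',\Sigma',\Delta}\max_i\Big|\int\phi_i\,\mathrm{tr}(\Delta\nabla^2p_{G'})\Big|>0 .
\]
Here the infimum runs over $\mathrm{supp}(P')\subseteq\bar B(0,R)$, $\Sigma'$ in the eigenvalue range, and $\|\Delta\|_{\mathrm{op}}=1$.

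I would prove this uniform bound by compactness. The parameter set is compact under weak convergence, $p_{G'}$ and its derivatives on $A$ depend continuously on the parameters, and a countable dense family of $\phi_i$ can be reduced to a finite one by a standard covering argument. So it is enough to show that, for each fixed $(P',\Sigma',\Delta)$, the function $\mathrm{tr}(\Delta\nabla^2p_{G'})$ does not vanish identically on $A$. Suppose it did. Off $\mathrm{supp}(P')$, $p_{G'}$ is real-analytic because the Bessel kernel is analytic away from the origin, so the identity extends to all of $\{\|x\|>R\}$. On that region we also have $p_{G'}=\tfrac12\mathrm{tr}(\Sigma'\nabla^2p_{G'})$.

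I then expect to reach a contradiction through the tail asymptotics. Along a direction $v$, $p_{G'}(tv)$ decays like $e^{-t\sqrt{2v^\top\Sigma'^{-1}v}}$ times a positive factor $\int e^{\langle\cdot,\theta\rangle}P'(d\theta)$ and polynomial corrections. Applying $\mathrm{tr}(\Delta\nabla^2)$ multiplies the leading term by $2\,(\Sigma'^{-1}v)^\top\Delta(\Sigma'^{-1}v)/(v^\top\Sigma'^{-1}v)$. Choosing $v$ with $v^\top\Sigma'^{-1}\Delta\Sigma'^{-1}v\ne0$, which is possible since $\Delta\ne0$, makes the leading term nonzero.

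An alternative route, if the asymptotics are delicate, is to work on the Fourier side. There $\widehat{p_{G'}}$ extends meromorphically with poles on $\{\xi^\top\Sigma'\xi=-2\}$, so a function vanishing off a compact set would force $\xi^\top\Delta\xi\,\Phi_{P'}/(1+\tfrac12\xi^\top\Sigma'\xi)$ to be entire. That should be impossible unless the numerator vanishes on the whole quadric, which is excluded because $\Phi_{P'}$ is entire of exponential type and nontrivial.

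This nondegeneracy step is the main obstacle. With it in hand, Assumption~\ref{ass1} holds with $\Psi(t)=t$, and Theorem~\ref{theorem:sharpen_ordinary_rate} finishes the proof.
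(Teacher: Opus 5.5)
Your treatment of ordinary smoothness, Assumption~\ref{ass2}, the PDE inversion condition and the final appeal to Theorem~\ref{theorem:sharpen_ordinary_rate} agrees with the paper. Your exact identity $\Phi_{\Sigma'}-\Phi_\Sigma=\tfrac12\,\xi^\top(\Sigma-\Sigma')\xi\,\Phi_\Sigma\Phi_{\Sigma'}$ is a tidier replacement for its mean-value argument. Your route to Assumption~\ref{ass1} is correct but genuinely different.

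The paper first projects onto a unit eigenvector $u$ of $\Sigma-\Sigma'$ with $|u^\top(\Sigma-\Sigma')u|=\|\Sigma-\Sigma'\|_{\mathrm{op}}$. Then $u^\top X$ is again a univariate Laplace mixture, and the $L^1$ distance can only shrink under projection. In dimension one it uses the same PDE pairing you write down, with a single bump $h$ supported in $(R+1,R+2)$. To the right of $[-R,R]$ the kernel is a pure exponential, so $p_{G'}''=\tfrac{2}{\sigma'}p_{G'}$ holds \emph{exactly} there. Positivity of $p_{G'}$ then gives $\int h\,p_{G'}''\ge 2c_{\min}/\lambda_{\max}$, hence an explicit constant in a few lines.

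You stay in $\mathbb R^d$, where no exact identity of this kind holds, and obtain the nondegeneracy by compactness over $(P',\Sigma',\Delta)$ plus a pointwise tail argument. Your leading-order multiplier $2\,w^\top\Delta w/(v^\top\Sigma'^{-1}v)$, with $w=\Sigma'^{-1}v$, reduces for $d=1$ to the paper's exact factor $2/\sigma'$.
\begin{itemize}
\item The price is a non-explicit constant, plus the need to verify weak continuity of the pairing and uniformity of the Bessel asymptotics in $\theta\in\bar B(0,R)$.
\item The gain is that nothing relies on the Laplace family being closed under one-dimensional projections, so the argument would transfer to kernels without that structure.
\end{itemize}

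Two small points.
\begin{itemize}
\item The real-analyticity step is dispensable. The covering argument only needs, for each parameter point, some $\phi\in C_c^\infty(\{\|x\|>R\})$ with nonzero pairing, not a common annulus, and the tail asymptotics provide such a $\phi$ directly.
\item In your Fourier fallback, the reason the numerator cannot vanish on the whole quadric is not that $\Phi_{P'}$ is a nontrivial entire function of exponential type. Rather, at the purely imaginary points $\xi=\pm isw$ of the quadric, the factor coming from $P'$ is a Laplace transform $\int e^{\mp s\,w^\top\theta}P'(d\theta)>0$, while $\xi^\top\Delta\xi=-s^2w^\top\Delta w\neq0$ for suitable $w$.
\end{itemize}
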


In this setting, an $L^1$ rate $\omega_n$ for $p_{G_0}$ yields
\begin{equation*}
    W_1(P,P_0) \lesssim \omega_n^{1/2}, \quad \|\Sigma - \Sigma_0\|_{\mathrm{op}} \lesssim \omega_n.
\end{equation*}
These rates improve upon both the Gaussian and Cauchy cases. This reflects the weaker smoothness of the Laplace kernel (which is only ordinary-smooth) together with the additional structure provided by the PDE inversion property allowing for a more direct recovery of the underlying mixing measure.

\section{Posterior asymptotics for Dirichlet process mixtures}\label{sec:posterior_rates}

Equipped with the general inequalities obtained in Section~\ref{sec:general_inequalities} and their specializations in Section~\ref{sec:examples}, we now derive posterior contraction results for mixing measures in Bayesian nonparametric homoscedastic location-scale mixture models. As discussed earlier, our approach proceeds in two steps:
\begin{enumerate}
    \item We verify an $L^1$ posterior contraction rate $\omega_n$ for density estimation, namely such that
    \begin{equation*}
        \Pi\left(G : \|p_G - p_{G_0}\|_1\geq K\omega_n\mid X_1,\ldots,X_n\right) \to 0
    \end{equation*}
    in $P_{G_0}^n$-probability for all large $K>0$ as $n\to\infty$, for a given kernel $f$ defining the density $p_G(x)=P*f(x\mid\cdot,\Sigma)$ (as usual, with $G=P\times\delta_\Sigma$). Such results are well studied in the literature on Bayesian asymptotics and will be borrowed accordingly;

    \item We use the inverse inequalities developed in the previous sections to translate the $L^1$ rate $\omega_n$ into contraction rates for $W_1(P,P_0)$ and $\|\Sigma-\Sigma_0\|_{\mathrm{op}}$.
\end{enumerate}

We provide explicit results for multivariate Gaussian mixtures and univariate Laplace mixtures under Dirichlet process priors, as these are settings where suitable $L^1$ contraction rates are available. In contrast, we do not include any explicit result for Cauchy mixtures, despite the bounds obtained in Theorem~\ref{thm:inequality_cauchy}. This is because, to the best of our knowledge, no $L^1$ posterior contraction results are currently available for infinite Cauchy mixtures. This gap appears to stem from limitations in the existing techniques used to estimate the entropy numbers of such model classes, which are in turn essential to establish posterior contraction rates (cf.~the discussion in Section~\ref{sec:notation}). In particular, standard approaches, originating from \cite{ghosal2001entropies}, rely on approximating infinite mixtures by finitely supported ones via moment-matching strategies, which fail in their current form with heavy-tailed kernels. As a consequence, entropy bounds and posterior contraction rates for infinite Cauchy mixtures constitute an open problem which is beyond the scope of this article.

In the Gaussian case, combining our inequalities with Theorem~9.9 in \cite{ghosal2017fundamentals} \citep[see also][]{shen2013adaptive}, which establishes the $L^1$ contraction rate $\omega_n=n^{-1/2}(\log n)^{(d+1)/2}$, yields the following result.

\begin{proposition}\label{pro:rate_multivariate_normal}
    Assume that $X_1,X_2,\ldots\overset{\textnormal{i.i.d.}}{\sim}p_{G_0}=\int_{\mathbb R} f(\cdot\mid \theta,\Sigma_0)\,P_0(d\theta)$, where $f$ is the $d$-variate Gaussian kernel, $P_0(\bar B(0,R_0))=1$ for some $R_0\in(0,\infty)$, and $\Sigma_0\in\mathbb{S}^{+}_{d}(\lambda_{\min},\lambda_{\max})$ for some $0<\lambda_{\min}<\lambda_{\max}<\infty$. Consider a homoscedastic Gaussian location-scale mixture prior $\Pi$ on $G=P\times \delta_{\Sigma}$, where $P\sim \mathrm{DP}(a,H)$ (for some $a>0$) with base measure $H$ having a strictly positive and continuous density supported on $[-R,R]^d$ for some $R\in[R_0,\infty)$, and $\Sigma$ has a prior with positive continuous density supported on $\mathbb{S}^{+}_{d}(\lambda_{\min},\lambda_{\max})$. Then the following statements hold true. 
    
    \begin{enumerate}
        \item[(a)] For $K>0$ sufficiently large,
        \begin{align*}
            \Pi\Bigg(G & \;:\; \exp\bigg( - \frac{C\exp\big(\tilde C/W_1(P, P_0)^2\big)}{W_1(P, P_0)^2} \bigg) \\
            &+ \exp\bigg(-M \frac{\log(1/\|\Sigma-\Sigma_0\|_{\mathrm{op}})}{\|\Sigma-\Sigma_0\|_{\mathrm{op}}} \bigg) \;\geq\; Kn^{-1/2}(\log n)^{\frac{d+1}{2}}
            \;\Big|\; X_1,\ldots,X_n\Bigg) \to 0
        \end{align*}
        in $P_{G_0}^n$-probability as $n\to\infty$, where the positive constants $M$, $C$, and $\tilde C$ may depend on $R$, $\lambda_{\min}$, $\lambda_{\max}$, and $d$. Consequently,
        \begin{align*}
            \Pi\Bigg(G \,:\, W_1(P, P_0) \geq K \frac{1}{\sqrt{\log\log n}} \;\Big|\; X_1,\ldots,X_n\Bigg) &\to 0, \\
            \Pi\Bigg(G \,:\, \|\Sigma-\Sigma_0\|_{\mathrm{op}} \geq K \frac{\log \log n}{\log n} \;\Big|\; X_1,\ldots,X_n\Bigg) &\to 0
        \end{align*}
        in $P_{G_0}^n$-probability for all large $K>0$ as $n\to\infty$.

        \item[(b)] In the isotropic Gaussian mixture setting, the above convergence statements may be replaced by
        \begin{align*}
        \Pi\Bigg(G & \;:\; \exp\bigg( - \frac{C}{W_1(P, P_0)^2} \bigg) \\
        &+ \exp\bigg(-M \frac{\log(1/|\sigma^2-\sigma_0^2|)}{|\sigma^2-\sigma^2_0|} \bigg) \;\geq\; Kn^{-1/2}(\log n)^{\frac{d+1}{2}}
        \;\Big|\; X_1,\ldots,X_n\Bigg) \to 0
        \end{align*}
        and
        \begin{align*}
        \Pi\Bigg(G \,:\, W_1(P, P_0) \geq K \frac{1}{\sqrt{\log n}} \;\Big|\; X_1,\ldots,X_n\Bigg) &\to 0,\\
        \Pi\Bigg(G \,:\, |\sigma^2-\sigma_0^2| \geq K \frac{\log \log n}{\log n} \;\Big|\; X_1,\ldots,X_n\Bigg) &\to 0.   
        \end{align*}
        in $P_{G_0}^n$-probability as $n\to\infty$.
    \end{enumerate}

\end{proposition}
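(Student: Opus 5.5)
The plan is to combine the $L^1$ contraction rate $\omega_n=n^{-1/2}(\log n)^{(d+1)/2}$ with the inverse bounds of Theorems~\ref{thm:inequality_gaussian} and \ref{thm:inequality_gaussian_isotropic}. \textbf{Step 1.} I will check that the prior meets the hypotheses of Theorem~9.9 in \cite{ghosal2017fundamentals} (compactly supported DP base measure with positive continuous density, and a scale prior with positive continuous density on a compact eigenvalue range). This gives $\Pi(G:\|p_G-p_{G_0}\|_1\ge K\omega_n\mid X_{1:n})\to0$ in $P_{G_0}^n$-probability. The stated result is written for a location-scale Gaussian prior. If the homoscedastic matrix case is not literally covered, I would verify the three conditions of Theorem~\ref{thm:G-vdV} directly, using the standard moment-matching entropy bound $\log D\lesssim(\log(1/\omega))^{d+1}$ and the usual Kullback--Leibler prior mass estimate. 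Both adapt routinely because $\Sigma$ ranges over a compact finite-dimensional set.

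\textbf{Step 2.} Every $G$ in the prior support lies in $\mathscr G$ with radius $R$, and $G_0$ does too since $R\ge R_0$. Theorem~\ref{thm:inequality_gaussian} therefore gives $\|p_G-p_{G_0}\|_1\ge C'[A(G)+B(G)]$, where $A$ and $B$ are the two exponential terms. So the event $\{A+B\ge K\omega_n/C'\}$ is contained in $\{\|p_G-p_{G_0}\|_1\ge K\omega_n\}$, and enlarging $K$ proves the first display of (a). Part (b) follows the same way from Theorem~\ref{thm:inequality_gaussian_isotropic}, with $\|\Sigma-\Sigma'\|_{\mathrm{op}}=|\sigma^2-\sigma'^2|$.

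\textbf{Step 3.} I will invert the two terms separately, using that each is increasing in its argument. On the complement event, $A\le K\omega_n$, so $C e^{\tilde C/w^2}/w^2\ge\log(1/(K\omega_n))\asymp\log n$ with $w=W_1(P,P_0)$. Because $e^{\tilde C/w^2}/w^2\le e^{2\tilde C/w^2}$ for small $w$, this gives $2\tilde C/w^2\ge\log\log n-O(1)$, hence $w\lesssim(\log\log n)^{-1/2}$. Similarly, $B\le K\omega_n$ gives $t^{-1}\log(1/t)\gtrsim\log n$ with $t=\|\Sigma-\Sigma_0\|_{\mathrm{op}}$. Inverting $u\mapsto u^{-1}\log(1/u)$ yields $t\lesssim\log\log n/\log n$: if $t\ge c\log\log n/\log n$, then $t^{-1}\log(1/t)\le (\log n/(c\log\log n))\log\log n\cdot(1+o(1))=(1+o(1))\log n/c$, which contradicts the inequality when $c$ is large. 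In the isotropic case, $C/w^2\gtrsim\log n$ gives $w\lesssim(\log n)^{-1/2}$ directly.

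\textbf{Main obstacle.} Everything except Step 1 is bookkeeping. The one real concern is making sure the cited $L^1$ rate applies to a prior with a single shared $\Sigma$ and a compactly supported base measure. If it does not, the fallback is the direct verification of Theorem~\ref{thm:G-vdV} described in Step 1. Some care is also needed so that the constants in the inversions stay uniform and the resulting monotone maps are well defined near zero (for small $t$).
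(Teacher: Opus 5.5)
Your proposal is correct and follows the same route as the paper. The paper simply combines the $n^{-1/2}(\log n)^{(d+1)/2}$ Hellinger (hence $L^1$) rate from Theorem~9.9 of Ghosal and van der Vaart with Theorems~\ref{thm:inequality_gaussian} and~\ref{thm:inequality_gaussian_isotropic}, and your explicit inversion of the two exponential terms supplies bookkeeping the paper leaves implicit. One small correction: $[-R,R]^d$ lies in $\bar B(0,\sqrt{d}R)$ rather than $\bar B(0,R)$, so the inverse bounds should be applied with radius $\sqrt{d}R$, which affects only the constants.
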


Proposition~\ref{pro:rate_multivariate_normal} highlights a key feature of the Gaussian setting. Despite the essentially parametric $L^1$ rate for density estimation, the super-smooth nature of the Gaussian kernel in general leads to slow, double logarithmic rates for recovering the mixing measure $P_0$ and logarithmic rates for the scale matrix $\Sigma_0$. The double logarithmic rate is however turned into a milder (but nonetheless slow) logarithmic one in the isotropic case. On the other hand, the dependence on the dimension $d$, which appears at the exponent of the logarithmic factor in the $L^1$ rate, only affects lower-order terms and does not influence the leading behavior of the $W_1$ and $\|\cdot\|_{\mathrm{op}}$ rates (in the fixed-$d$ regime considered here). Finally, Proposition~\ref{pro:rate_multivariate_normal} highlights that the rate of recovery for $\Sigma_0$ is much faster than the rates for $P_0$, both in the general and isotropic cases, uncovering significant heterogeneity in parameter estimation efficiency with this kind of infinite mixture model.

We now turn to the Laplace case. Using the $L^1$ contraction rate $\omega_n=n^{-1/4}(\log n)^{5/4}$ available for univariate Dirichlet process mixtures of Laplace kernels,\footnote{Adapted from Theorem 4.4 in \cite{scricciolo2011posterior} and setting, in the author's notation, $p=1$.} and combining it with Theorem~\ref{thm:inequality_laplace}, we obtain the following result.

\begin{proposition}\label{pro:rate_univariate_laplace}
    Assume that $X_1,X_2,\ldots\overset{\textnormal{i.i.d.}}{\sim}p_{G_0}=\int_{\mathbb R} f(\cdot\mid \theta,\sigma_0)\,P_0(d\theta)$, where $f$ is the univariate Laplace kernel and $P_0([-R_0,R_0])=1$ for some $R_0\in(0,\infty)$. Consider a homoscedastic Laplace location-scale mixture prior $\Pi$ on $G=P\times \delta_{\sigma}$, where $P\sim \mathrm{DP}(a,H)$ (for some $a>0$) with base measure $H$ having a strictly positive continuous density supported on $[-R,R]\subset\mathbb R$ for some $R\in[R_0,\infty)$, and $\sigma$ has a prior with positive continuous density supported on $[\underline \sigma,\overline \sigma]\subset(0,\infty)$ for some $0<\underline \sigma<\sigma_0<\overline \sigma<\infty$. Then, for $K>0$ sufficiently large,
\begin{align*}
    \Pi\Bigg(G \,:\, W_1(P, P_0)^{2} + |\sigma - \sigma_{0}| \;\geq\; K\frac{(\log n)^{5/4}}{n^{1/4}}
    \;\Big|\; X_1,\ldots,X_n\Bigg) \to 0
\end{align*}
in $P_{G_0}^n$-probability as $n\to\infty$. Consequently,
\begin{align*}
    \Pi\Bigg(G \,:\, W_1(P, P_0) \geq K \frac{(\log n)^{5/8}}{n^{1/8}} \;\Big|\; X_1,\ldots,X_n\Bigg) &\to 0, \\
    \Pi\Bigg(G \,:\, |\sigma-\sigma_0| \geq K \frac{(\log n)^{5/4}}{n^{1/4}} \;\Big|\; X_1,\ldots,X_n\Bigg) &\to 0
\end{align*}
in $P_{G_0}^n$-probability for all large $K>0$ as $n\to\infty$.  
\end{proposition}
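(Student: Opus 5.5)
The plan is to combine an $L^1$ contraction rate for the density with the inverse bound of Theorem~\ref{thm:inequality_laplace} for $d=1$, and then to read off separate rates for $W_1(P,P_0)$ and $|\sigma-\sigma_0|$.

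\emph{Step 1: $L^1$ rate.} First I would import the $L^1$ rate $\omega_n=n^{-1/4}(\log n)^{5/4}$ for univariate Dirichlet process Laplace mixtures, adapted from Theorem~4.4 of \cite{scricciolo2011posterior}. This means checking that its hypotheses hold under our prior. The true $P_0$ must be compactly supported in $[-R_0,R_0]\subseteq[-R,R]$. The base measure $H$ must have a positive continuous density on $[-R,R]$. The scale prior must have a positive continuous density on $[\underline\sigma,\overline\sigma]$, with $\sigma_0$ in its interior.

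If the cited result is stated for a fixed scale, I would instead re-verify the three conditions of Theorem~\ref{thm:G-vdV} directly.
\begin{itemize}
\item \emph{Entropy.} Since $\|p_G-p_{G'}\|_1\lesssim W_1(P,P')+|\sigma-\sigma'|$ on the compact parameter set, an entropy bound for location mixtures multiplies by an extra $O(\log(1/\omega_n))$ factor covering $[\underline\sigma,\overline\sigma]$. This does not change the rate.
\item \emph{Sieve mass.} The sieve condition is trivial, taking $\mathscr G_n=\mathscr G$.
\item \emph{Prior mass.} A $\sigma$-neighbourhood of width $\asymp\omega_n^2$ has prior mass $\gtrsim\omega_n^2$, which is polynomial and hence negligible in the exponent. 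The KL neighbourhood bounds for $P$ then carry over, because Laplace mixture densities with location and scale in compact sets have log-ratios that are uniformly bounded.
\end{itemize}
I expect this adaptation, namely ensuring the cited rate covers an unknown shared scale, to be the main obstacle. I would handle it through the Lipschitz-in-$\sigma$ control just described.

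\emph{Step 2: apply the inverse bound.} For $d=1$ the operator norm reduces to $\|\Sigma-\Sigma_0\|_{\mathrm{op}}=|\sigma^2-\sigma_0^2|$ when $\Sigma=\sigma^2$. I would parametrize the scale by $\sigma$, with $\sigma^2\in[\underline\sigma^2,\overline\sigma^2]$. Since $|\sigma^2-\sigma_0^2|=(\sigma+\sigma_0)|\sigma-\sigma_0|\ge 2\underline\sigma|\sigma-\sigma_0|$, Theorem~\ref{thm:inequality_laplace} gives
\[
\|p_G-p_{G_0}\|_1\ge C''\big[W_1(P,P_0)^2+|\sigma-\sigma_0|\big]
\]
for all $G$ in the support. (If the paper's univariate Laplace convention uses $\sigma$ as the scale itself, the same argument applies directly.)

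It follows that the event $\{W_1(P,P_0)^2+|\sigma-\sigma_0|\ge K\omega_n\}$ is contained in $\{\|p_G-p_{G_0}\|_1\ge C''K\omega_n\}$. The posterior probability of the latter tends to zero for $K$ large by Step 1, which gives the first display of the proposition.

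\emph{Step 3: separate rates.} Each summand is at most the sum, so the posterior probability of $\{W_1(P,P_0)\ge \sqrt{K\omega_n}\}$ vanishes, and so does that of $\{|\sigma-\sigma_0|\ge K\omega_n\}$. Substituting $\omega_n$ gives $\sqrt{\omega_n}=n^{-1/8}(\log n)^{5/8}$. Renaming $\sqrt K$ as $K$ then yields the two stated rates.
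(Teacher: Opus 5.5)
Your proposal is correct and follows the same route as the paper. Import the Hellinger (hence $L^1$) rate $n^{-1/4}(\log n)^{5/4}$ from Theorem~4.4 of \cite{scricciolo2011posterior}, which already places a prior on the shared scale, so your fallback is not needed; then apply Theorem~\ref{thm:inequality_laplace} on the posterior-almost-sure support $[-R,R]$, and split the sum.

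One caution about that fallback anyway: log-likelihood ratios of Laplace mixtures with different scales are not uniformly bounded. They grow linearly in $|x|$ with slope of order $|\sigma-\sigma_0|$, so the Kullback--Leibler control would have to use the exponential tails of $p_{G_0}$ rather than a sup-norm bound.
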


Laplace mixtures display a very different behavior compared to the Gaussian case. Although their $L^1$ rate is slower (of order $n^{-1/4}$, up to poly-logarithmic factors, compared to the Gaussian $n^{-1/2}$), the resulting rates of recovery of $P_0$ and $\Sigma_0$ are polynomial rather than logarithmic. This reflects the substantially weaker smoothing induced by the Laplace kernel combined with the PDE inversion structure, which yield a much sharper inverse bound (compare Theorems~\ref{thm:inequality_laplace} and \ref{thm:inequality_gaussian_isotropic}) and compensate for the slower density estimation rate. However, Laplace mixtures exhibit the same qualitative separation in contraction rates as Gaussian mixtures: the location mixing measure $P_0$ is recovered at rate $n^{1/8}(\log n)^{5/8}$, whereas the scale parameter $\Sigma_0$ is recovered at the faster rate $n^{1/4}(\log n)^{5/4}$. Ignoring poly-logarithmic factors, this reflects significant heterogeneity in the form of a polynomial gap in the estimation rates for these two parameters of interest.

\subsection{Mixing measure priors beyond the Dirichlet process}

We conclude this section by noting that, although the results above are stated for Dirichlet process mixtures, the underlying methodology is not restricted to this specific prior choice. The arguments rely on two ingredients: (i) the inverse inequalities developed in Sections~\ref{sec:general_inequalities} and~\ref{sec:examples}, and (ii) available $L^1$ posterior contraction rates for the corresponding mixture models. Since the inequalities are general, they can be combined with contraction results for a wide range of alternative priors on the mixing measure.

Examples include Pitman--Yor processes \citep{perman1992size, pitman1997two}, normalized inverse-Gaussian processes \citep{lijoi2005hierarchical}, more general normalized random measures with independent increments \citep{kingman1967completely,regazzini2003distributional,Nieto-Barajas:2004:NRM, lijoi2010models}, Gibbs-type priors \citep{GnedinPitman2006,deblasi2015gibbs}, and species sampling models \citep{kingman1978representation, pitman1996some}. For instance, the inequalities developed here could be combined with the contraction rates established in \cite{scricciolo2014adaptive} for Pitman--Yor and normalized inverse-Gaussian mixtures. We do not pursue these extensions further, as they do not introduce additional conceptual difficulties given the generality of our framework.

\section{Discussion}\label{sec:discussion}
We studied posterior contraction for mixing measures in homoscedastic location-scale mixture models with infinitely many components. We developed a general approach to derive lower-bounds for the $L^1$ distance between mixture densities in terms of discrepancies between the corresponding mixing measures. Our method relies on the dual formulation of the $W_1$ distance in terms of 1-Lipschitz functions, combined with functional approximation and Fourier inversion techniques, and yields inequalities whose strength reflects the smoothness of the mixture kernel and, in the ordinary-smooth case, the applicability of a novel PDE inversion condition. These results provide a unified route from $L^1$ convergence to contraction at the level of both the mixing measure and the shared scale parameter, and apply to a range of commonly used kernels in the context of Dirichlet process mixtures.

In particular, we point out a connection with deconvolution problems, which have motivated much of the previous work on mixing measure convergence, as they can be framed, in the mixture setting, as the recovery of a signal distributed according to the unknown mixing measure when observations are contaminated by additive noise drawn from the mixture kernel \citep{Gao2016Posterior,scricciolo2018bayes,rousseau2024wasserstein}. As noted earlier, existing contributions focus on the location-only mixture case, which in deconvolution terminology corresponds to the setting where the noise scale is known. Our results can be interpreted as characterizing recovery of the mixing measure in the more challenging scenario where the noise scale is also unknown. Moreover, in contrast to other multivariate deconvolution analyses \citep{rousseau2024wasserstein}, our approach does not rely on independence assumptions across noise coordinates and instead accommodates general covariance structures. New results on $L^1$ density estimation for infinite Cauchy mixtures would also make it possible, when combined with our inequality in Theorem~\ref{thm:inequality_cauchy}, to fully characterize the currently unexplored problem of deconvolution with heavy-tailed Cauchy error distribution. 

The functional-analytic techniques we developed to handle the dual formulation of the $W_1$ metric are quite general and open several directions for future work. A natural extension is to heteroscedastic location-scale mixtures, where the scale parameter varies across components. In addition, our results for different kernel choices reveal significant heterogeneity, with Laplace mixtures achieving substantially faster rates than Gaussian mixtures. This raises the question of whether, when the primary objects of interest are the mixing measure and the scale (as in the deconvolution problems discussed above), it may be advantageous to use a misspecified kernel with lower smoothness, potentially improving recovery rates---perhaps up to an approximation error---when the true kernel is very smooth. A natural setting to investigate this question is the class of exponential power kernels considered for density estimation by \cite{scricciolo2011posterior}, where the smoothness can be tuned continuously and misspecification can be studied in a systematic way. Finally, it would be of interest to extend our results to models with more complex prior structures on the mixing measure, such as those inducing dependence across multiple mixing measures \citep[][among many others]{Teh-et-al06,rodriguez2008nested,lijoi2014bayesian,franzolini2025multivariate} or repulsion across atoms to achieve well-separated components \citep{petralia2012repulsive,xu2016bayesian}.

\clearpage
\begin{center}

{\bf{\LARGE{Appendices}}}
\end{center}

\appendix

In the following appendices, we report the detailed proofs of the theorems and propositions presented in the main text of the paper (Appendix~\ref{appA}), as well as the statements and proofs of several auxiliary lemmas (Appendix~\ref{appB})

\section{Proofs of theorems and propositions in the main text}\label{appA}

\subsection*{Proof of Theorem~\ref{theorem:supersmooth_rate}}\label{proof_thm3.2}
Recall that the probability measures $P$ and $P'$ are  supported on $\mathcal{X} = \bar{B}(0, R)$. By the fundamental Kantorovich-Rubinstein Duality Theorem \citep{Villani-03} for optimal transport on the compact metric space $\mathcal{X}$, the $W_1$ distance can be expressed as
$$ W_1(P, P') = \sup_{h \in \mathrm{Lip}_1(\mathcal{X})} \int_{\mathcal{X}} h(\theta) (P(d\theta) - P'(d\theta)) = \int_{\mathcal{X}} h^*(\theta) (P(d\theta) - P'(d\theta)),$$
%
for a function $h^* : \mathcal{X} \to \mathbb{R}$ with $h^* \in \mathrm{Lip}_1(\mathcal{X})$.

We define $ h_{\mathcal{X}}^{*}(x) = h^*(x) - h^*(0)$ for all $x \in \mathcal{X}$ to guarantee that $h_{\mathcal{X}}^{*}(0) = 0$. Furthermore, $|h_{\mathcal{X}}^{*}(x) - h_{\mathcal{X}}^{*}(y)| = |h^*(x) - h^*(y)| \le \|x - y\|$. Thus, $h_{\mathcal{X}}^{*} \in \mathrm{Lip}_1(\mathcal{X})$. The shift also leaves the optimal transport integral unchanged, which implies that
$$\int_{\mathcal{X}} h_{\mathcal{X}}^{*}(\theta) (P(d\theta) - P'(d\theta)) = \int_{\mathcal{X}} h^*(\theta) (P(d\theta) - P'(d\theta)) = W_1(P, P').$$

\subsubsection*{Extension from compact domain $\mathcal{X}$ to $\mathbb{R}^{d}$} We now extend the domain of the function $h_{\mathcal{X}}^{*}$ from $\mathcal{X}$ to $\mathbb{R}^d$, while preserving its exact 1-Lipschitz property. This can be done via the McShane-Whitney extension, which is recalled in Lemma~\ref{lemma:global_extension} (see \cite{heinonen2001lectures}, \cite{Whitney1934}, or \cite{McShane1934} for reference). Given the extension of the function $h_{\mathcal{X}}^{*}$ to $\bar{h}_{\mathcal{X}}^{*}$, we can rewrite the integral from the compact subset $\mathcal{X}$ to the full Euclidean space $\mathbb{R}^d$ as follows: 
\begin{align*}
\int_{\mathbb{R}^d}\bar{h}_{\mathcal{X}}^{*}(\theta) (P(d\theta) - P'(d\theta)) & = \int_{\mathcal{X}} \bar{h}_{\mathcal{X}}^{*}(\theta) (P(d\theta) - P'(d\theta)) + \int_{\mathbb{R}^d \setminus \mathcal{X}} \bar{h}_{\mathcal{X}}^{*}(\theta)(P(d\theta) - P'(d\theta))\\
& = \int_{\mathcal{X}} h_{\mathcal{X}}^{*}(\theta) (P(d\theta) - P'(d\theta)) \\
& = W_1(P, P') . 
\end{align*}

\subsubsection*{Spatial truncation via smooth cutoff} Since $\bar{h}_{\mathcal{X}}^{*} \in \mathrm{Lip}_1(\mathbb{R}^d)$, we have the global geometric growth bound:
$$\forall x \in \mathbb{R}^d,\quad |\bar{h}_{\mathcal{X}}^{*}(x)| = |\bar{h}_{\mathcal{X}}^{*}(x) - \bar{h}_{\mathcal{X}}^{*}(0)| \le \|x\|.$$
Our next goal is to further modify $\bar{h}_{\mathcal{X}}^{*}(x)$ to obtain an absolutely integrable function that is amenable to Fourier inversion, and we do so via spatial truncation as follows.

Concretely, we aim to construct a spatial cutoff function $\eta: \mathbb{R}^d \to \mathbb{R}$ in $C_c^\infty(\mathbb{R}^d)$ that takes values exactly in $[0,1]$, evaluates identically to $1$ on the closed ball $\bar{B}(0, R)$ (where $P$ and $P'$ are supported), and vanishes completely outside the open ball $B(0, 2R)$. Lemma~\ref{lemma:cutoff_function} provides the detailed construction of such a cutoff function (see also \citep{Lee2013SmoothManifolds}). Hence, given $\eta$ , we denote 
$$\widetilde{h}_{\mathcal{X}}^{*}(x) := \bar{h}_{\mathcal{X}}^{*}(x) \eta(x)$$ 
for all $x \in \mathbb{R}^{d}$. As we show in Lemma~\ref{lemma:cutoff_function_properties}, $\widetilde{h}_{\mathcal{X}}^{*} \in L_{1}(\mathbb{R}^{d}):=\{h\in\mathbb R^{(\mathbb R^d)} : \int_{\mathbb R^d} |h(x)|dx<\infty\}$ and it is globally Lipschitz on $\mathbb{R}^{d}$. By the properties of $\eta$, we also clearly have that
\begin{align}
\int_{\mathbb{R}^d}\widetilde{h}_{\mathcal{X}}^{*}(\theta) (P(d\theta) - P'(d\theta)) & = \int_{\mathcal{X}} \widetilde{h}_{\mathcal{X}}^{*}(\theta) (P(d\theta) - P'(d\theta)) + \int_{\mathbb{R}^d \setminus \mathcal{X}} \widetilde{h}_{\mathcal{X}}^{*}(\theta)(P(d\theta) - P'(d\theta)) \nonumber \\
& = \int_{\mathcal{X}} h_{\mathcal{X}}^{*}(\theta) (P(d\theta) - P'(d\theta)) \nonumber\\
&= W_1(P, P'). \label{eq:key_equality_duality}
\end{align}

\subsubsection*{Spectral bandlimiting via the Jackson kernel} Since $\widetilde{h}_{\mathcal{X}}^{*} \in L_{1}(\mathbb{R}^{d})$, its Fourier transform $\mathcal{F}[\widetilde{h}_{\mathcal{X}}^{*}](\xi)$ is well-defined. Our goal is to analyze $\int_{\mathcal{X}} \widetilde h_{\mathcal{X}}^{*}(\theta) (P(d\theta) - P'(d\theta))$, which by the Fourier inversion theorem can be rewritten as $\int_{\mathbb{R}^{d}} \mathcal{F}[h_{\mathcal{X}}^{*}](\xi) (\Phi_{P}(\xi) - \Phi_{P'}(\xi))d\xi$. However, the integrand $\mathcal{F}[h_{\mathcal{X}}^{*}](\xi) (\Phi_{P}(\xi) - \Phi_{P'}(\xi))$ may decay slowly and fail to be absolutely integrable. To prevent this, we truncate the frequency domain within a neighborhood $\{\xi\in\mathbb R^d :\|\xi\| \leq \Lambda\}$ for some $\Lambda>0$. This can be done via convolving the function $\widetilde{h}_{\mathcal{X}}^{*}$ with the Jackson kernel. 

To that end, consider the multivariate Jackson kernel (see \cite{devore1993constructive}, \cite{butzer1971fourier})
$$K(x) := \bigg( \frac{3}{8\pi\sqrt{d}} \bigg)^d \prod_{i=1}^d \mathrm{sinc}^4\bigg(\frac{x_i}{4\sqrt{d}}\bigg), \quad x \in \mathbb{R}^{d},$$
where $\mathrm{sinc}(u) := \sin u/u$, which by construction satisfies $\int_{\mathbb{R}^d} K(x) dx = 1$. Lemma~\ref{lemma:kernel_properties} establishes several properties of the Fourier transform of $K$ and its moments, which we use in what follows. For any $\Lambda > 0$, define the function $K_{\Lambda}(x) := \Lambda^{d} K(\Lambda x)$. By Lemma~\ref{lemma:kernel_properties}, $\mathrm{supp}(\mathcal{F}[K]) \subseteq \bar{B}(0, 1)$, which implies that $\mathrm{supp}(\mathcal{F}[K_{\Lambda}]) \subseteq \bar{B}(0, \Lambda)$.\footnote{For any function $h:\mathbb R^d\to\mathbb R$, $\mathrm{supp}(h)$ denotes the closure of the set $\{x\in\mathbb R^d : h(x)\neq 0\}$.} Consider the function
$$\widetilde{h}_{\mathcal{X}, \Lambda}^{*}(x) := (\widetilde{h}_{\mathcal{X}}^{*} * K_\Lambda)(x) = \int_{\mathbb{R}^d} \widetilde{h}_{\mathcal{X}}^{*}(x - y) K_\Lambda(y) dy.$$
Direct computation yields that
\begin{align*}
    \widetilde{h}_{\mathcal{X}, \Lambda}^{*}(x) - \widetilde{h}_{\mathcal{X}}^{*}(x) & = \int_{\mathbb{R}^d} \widetilde{h}_{\mathcal{X}}^{*}(x - y) K_\Lambda(y) dy - \int_{\mathbb{R}^d} \widetilde{h}_{\mathcal{X}}^{*}(x) K_\Lambda(y) dy \\
    & = \int_{\mathbb{R}^d} [\widetilde{h}_{\mathcal{X}}^{*}(x - y) - \widetilde{h}_{\mathcal{X}}^{*}(x)] K_\Lambda(y) dy.
\end{align*}
Since the function $\widetilde{h}_{\mathcal{X}}^{*}$ is a globally $(1+2RM_{\eta})$-Lipschitz function (which follows from Lemma~\ref{lemma:cutoff_function_properties}), we further obtain that, for any $x \in \mathbb{R}^{d}$,
\begin{align*}
    |\widetilde{h}_{\mathcal{X}, \Lambda}^{*}(x) - \widetilde{h}_{\mathcal{X}}^{*}(x)| &\leq \int_{\mathbb{R}^d} |\widetilde{h}_{\mathcal{X}}^{*}(x - y) - \widetilde{h}_{\mathcal{X}}^{*}(x)| \Lambda^{d} K(\Lambda y) dy\\
    &\leq (1+2RM_{\eta}) \int_{\mathbb{R}^d} \|y\| \Lambda^{d} K(\Lambda y) dy \\
    & = \frac{(1 + 2RM_{\eta})M_{K}}{\Lambda},
\end{align*}
where the last inequality follows from Lemma~\ref{lemma:kernel_properties}. As a consequence, we arrive at
\begin{align}
    \|\widetilde{h}_{\mathcal{X}, \Lambda}^{*}- \widetilde{h}_{\mathcal{X}}^{*}\|_{\infty} \leq \frac{(1 + 2RM_{\eta})M_{K}}{\Lambda}. \label{eq:key_approximation_inequality_1}
\end{align}
From the dual formulation of $W_{1}(P,P')$ in Equation~\eqref{eq:key_equality_duality}, we have
\begin{align*}W_1(P, P') &= \int_{\mathbb{R}^d}\widetilde{h}_{\mathcal{X}}^{*}(\theta) (P(d\theta) - P'(d\theta)) \\
&=\int_{\mathbb{R}^d} (\widetilde{h}_{\mathcal{X}}^{*}(\theta) - \widetilde{h}_{\mathcal{X}, \Lambda}^{*}(\theta)) (P(d\theta) - P'(d\theta)) + \int_{\mathbb{R}^d}  \widetilde{h}_{\mathcal{X}, \Lambda}^{*}(\theta) d(P(d\theta) - P'(d\theta)).
\end{align*}
A direct application of Hölder's inequality leads to 
$$\int_{\mathbb{R}^d} (\widetilde{h}_{\mathcal{X}}^{*}(\theta) - \widetilde{h}_{\mathcal{X}, \Lambda}^{*}(\theta)) (P(d\theta) - P'(d\theta)) \leq \|\widetilde{h}_{\mathcal{X}, \Lambda}^{*}- \widetilde{h}_{\mathcal{X}}^{*}\|_{\infty} \|P - P'\|_{1} \leq \frac{2(1 + 2RM_{\eta})M_{K}}{\Lambda},$$
where the final inequality is due to $\|P - P'\|_{1} \leq 2$ and the bound of $\|\widetilde{h}_{\mathcal{X}, \Lambda}^{*}- \widetilde{h}_{\mathcal{X}}^{*}\|_{\infty}$ in Equation~\eqref{eq:key_approximation_inequality_1}.\footnote{With a slight abuse of notation, $\|P-P'\|_1:=\sup_{A\in\mathscr{B}(\mathbb{R}^d)}|P(A) - P'(A)|$ denotes the total variation distance between probability measures $P$ and $P'$.} Hence, we obtain the following bound
\begin{align}
W_1(P, P') \leq \frac{2(1 + 2RM_{\eta})M_{K}}{\Lambda} + \int_{\mathbb{R}^d}  \widetilde{h}_{\mathcal{X}, \Lambda}^{*}(\theta) (P(d\theta) - P'(d\theta)). \label{eq:key_inequality_duality_1_2}
\end{align}

To apply the Fourier inversion theorem to $\int_{\mathbb{R}^d}  \widetilde{h}_{\mathcal{X}, \Lambda}^{*}(\theta) (P(d\theta) - P'(d\theta))$, we first need to demonstrate that $\widetilde{h}_{\mathcal{X}, \Lambda}^{*} \in L_{1}(\mathbb{R}^{d})$ and $\mathcal{F}[\widetilde{h}_{\mathcal{X}, \Lambda}^{*}] \in L_{1}(\mathbb{R}^{d})$. Because $\widetilde{h}_{\mathcal{X}, \Lambda}^{*}(x) = (\widetilde{h}_{\mathcal{X}}^{*} * K_\Lambda)(x)$, an application of Young's inequality leads to
$\|\widetilde{h}_{\mathcal{X}, \Lambda}^{*}\|_{1} \leq \|\widetilde{h}_{\mathcal{X}}^{*}\|_{1}\|K_\Lambda\|_{1} = \|\widetilde{h}_{\mathcal{X}}^{*}\|_{1} < \infty$ (thanks to Lemma~\ref{lemma:cutoff_function_properties}). As for $\mathcal{F}[\widetilde{h}_{\mathcal{X}, \Lambda}^{*}]$, by the properties of convolutions we obtain that
$\mathcal{F}[\widetilde{h}_{\mathcal{X}, \Lambda}^{*}](\xi) = \mathcal{F}[\widetilde{h}_{\mathcal{X}}^{*}](\xi) \mathcal{F}[K_\Lambda](\xi)$. As $\mathrm{supp}(\mathcal{F}[K_{\Lambda}]) \subseteq \bar{B}(0, \Lambda)$, we have $\mathrm{supp}(\mathcal{F}[\widetilde{h}_{\mathcal{X}, \Lambda}^{*}]) \subseteq \bar{B}(0, \Lambda)$. Furthermore, $\|\mathcal{F}[K_\Lambda]\|_{\infty} \leq 1$ and $\|\mathcal{F}[\widetilde{h}_{\mathcal{X}}^{*}]\|_{\infty} \leq \frac{\pi^{d/2}}{\Gamma(d/2 + 1)} (2R)^{d + 1}$ (see Lemma~\ref{lemma:cutoff_function_properties}). Therefore, $\|\mathcal{F}[\widetilde{h}_{\mathcal{X}, \Lambda}^{*}]\|_{\infty} \leq \|\mathcal{F}[\widetilde{h}_{\mathcal{X}}^{*}]\|_{\infty} \leq \frac{\pi^{d/2}}{\Gamma(d/2 + 1)} (2R)^{d + 1}$. Putting these results together, we obtain that $\mathcal{F}[\widetilde{h}_{\mathcal{X}, \Lambda}^{*}] \in L_{1}(\mathbb{R}^{d})$. 

Now, from the Fourier inversion theorem, as $\widetilde{h}_{\mathcal{X}, \Lambda}^{*} \in L_{1}(\mathbb{R}^{d})$ and $\mathcal{F}[\widetilde{h}_{\mathcal{X}, \Lambda}^{*}] \in L_{1}(\mathbb{R}^{d})$, we can write
$$\widetilde{h}_{\mathcal{X}, \Lambda}^{*}(x) = \frac{1}{(2\pi)^d} \int_{\mathbb{R}^d} \mathcal{F}[\widetilde{h}_{\mathcal{X}, \Lambda}^{*}](\xi) e^{i \xi^\top x} d\xi,$$
leading to 
$$\int_{\mathbb{R}^d}  \widetilde{h}_{\mathcal{X}, \Lambda}^{*}(\theta) (P(d\theta) - P'(d\theta)) = \frac{1}{(2\pi)^d} \int_{\mathbb{R}^d}  \left(\int_{\mathbb{R}^d} \mathcal{F}[\widetilde{h}_{\mathcal{X}, \Lambda}^{*}](\xi) e^{i \xi^\top \theta} d\xi\right) (P(d\theta) - P'(d\theta)).$$
Since $|\mathcal{F}[\widetilde{h}_{\mathcal{X}, \Lambda}^{*}]|$ and $P-P'$ have compact support, and $|\mathcal{F}[\widetilde{h}_{\mathcal{X}, \Lambda}^{*}]|$ is bounded, we have 
\begin{align*}
\int_{\mathbb{R}^d}\int_{\mathbb{R}^d}|\mathcal{F}[\widetilde{h}_{\mathcal{X}, \Lambda}^{*}](\xi)e^{i \xi^\top \theta}|d\xi d|P-P'| = \int_{\mathbb{R}^d}\int_{\mathbb{R}^d}|\mathcal{F}[\widetilde{h}_{\mathcal{X}, \Lambda}^{*}](\xi)|d\xi d|P-P'|  < \infty.
\end{align*}
Thus, by Fubini's theorem, we have that
\begin{align*}
    \int_{\mathbb{R}^d}  \widetilde{h}_{\mathcal{X}, \Lambda}^{*}(\theta) (P(d\theta) - P'(d\theta)) & = \frac{1}{(2\pi)^d} \int_{\mathbb{R}^d}  \left(\int_{\mathbb{R}^d} e^{i \xi^\top \theta} (P(d\theta) - P'(d\theta)) \right) \mathcal{F}[\widetilde{h}_{\mathcal{X}, \Lambda}^{*}](\xi) d\xi \\
    & = \frac{1}{(2\pi)^d} \int_{\mathbb{R}^d} \mathcal{F}[\widetilde{h}_{\mathcal{X}, \Lambda}^{*}](\xi) \Big( \Phi_P(\xi) - \Phi_{P'}(\xi) \Big) d\xi \\
    & = \frac{1}{(2\pi)^d} \int_{\bar{B}(0, \Lambda)} \mathcal{F}[\widetilde{h}_{\mathcal{X}, \Lambda}^{*}](\xi) \Big( \Phi_P(\xi) - \Phi_{P'}(\xi) \Big) d\xi,
\end{align*}
where the final identity is due to the fact that $\mathrm{supp}(\mathcal{F}[\widetilde{h}_{\mathcal{X}, \Lambda}^{*}]) \subseteq \bar{B}(0, \Lambda)$. 

Putting all of these results together, we arrive at
\begin{align}
    W_1(P, P') \leq \frac{2(1 + 2RM_{\eta})M_{K}}{\Lambda} + \frac{1}{(2\pi)^d} \int_{\bar{B}(0, \Lambda)} \mathcal{F}[\widetilde{h}_{\mathcal{X}, \Lambda}^{*}](\xi) \Big( \Phi_P(\xi) - \Phi_{P'}(\xi) \Big) d\xi. \label{eq:key_inequality_duality_2}
\end{align}
Next, since $p_G = P * f(\cdot\mid0,\Sigma)$ and, therefore, $\Phi_{p_{G}}(\xi) = \Phi_{P}(\xi) \Phi_{\Sigma}(\xi)$ for all $\xi \in \mathbb{R}^{d}$, we obtain that
\begin{align*}
    \Phi_P(\xi) - \Phi_{P'}(\xi) &= \frac{\Phi_{p_{G}}(\xi)}{\Phi_\Sigma(\xi)} - \frac{\Phi_{p_{G'}}(\xi)}{\Phi_{\Sigma'}(\xi)} \\
    &= \frac{\Phi_{p_{G}}(\xi) - \Phi_{p_{G'}}(\xi)}{\Phi_\Sigma(\xi)} + \Phi_{P'}(\xi) \left( \frac{\Phi_{\Sigma'}(\xi) - \Phi_\Sigma(\xi)}{\Phi_\Sigma(\xi) \Phi_{\Sigma'}(\xi)} \right) \Phi_{\Sigma'}(\xi),
\end{align*}
where we used the fact that both $\Phi_{\Sigma'}(\xi)$ and $\Phi_{\Sigma}(\xi)$ are non-zero for all $\xi\in\mathbb R^d$ by assumption.
Given that $|\Phi_{P'}(\xi)| \le 1$ for all $\xi \in \mathbb{R}^{d}$ and $\|\mathcal{F}[\widetilde{h}_{\mathcal{X}, \Lambda}^{*}]\|_{\infty} \leq \frac{\pi^{d/2}}{\Gamma(d/2 + 1)} (2R)^{d + 1}=:C(R,d)$, an application of the triangle inequality leads to
\begin{align}
    &\left|\int_{\bar{B}(0, \Lambda)} \mathcal{F}[\widetilde{h}_{\mathcal{X}, \Lambda}^{*}](\xi) \Big( \Phi_P(\xi) - \Phi_{P'}(\xi) \Big) d\xi\right|  \nonumber \\
    &\leq \frac{C(R,d)}{(2\pi)^{d}} \int_{\bar{B}(0, \Lambda)} \frac{|\Phi_{p_{G}}(\xi) - \Phi_{p_{G'}}(\xi)|}{|\Phi_\Sigma(\xi)|} d\xi  + \frac{C(R,d)}{(2\pi)^{d}}  \int_{\bar{B}(0, \Lambda)} \frac{|\Phi_{\Sigma'}(\xi) - \Phi_\Sigma(\xi)|}{|\Phi_\Sigma(\xi)|} d\xi. \label{eq:key_inequality_duality_3}
\end{align}
By construction, $|\Phi_{p_{G}}(\xi) - \Phi_{p_{G'}}(\xi)| \leq \|p_{G} - p_{G'}\|_{1}$ for any $\xi \in \mathbb{R}^{d}$. Furthermore, applying the super-smooth assumption, we have $\exp(-c_1 \|\xi\|^\alpha) \le |\Phi_\Sigma(\xi)|$. Therefore, we obtain that
\begin{align}
\frac{C(R,d)}{(2\pi)^{d}} \int_{\bar{B}(0, \Lambda)} \frac{|\Phi_{p_{G}}(\xi) - \Phi_{p_{G'}}(\xi)|}{|\Phi_\Sigma(\xi)|} d\xi  & \leq  \frac{C(R,d)}{(2\pi)^{d}} \|p_{G} - p_{G'}\|_{1} \int_{\bar{B}(0, \Lambda)} \exp(c_1 \|\xi\|^\alpha) d\xi \nonumber \\
& \leq C_{1} \|p_{G} - p_{G'}\|_{1} \Lambda^{d} \exp(c_{1} \Lambda^{\alpha}), \label{eq:key_inequality_duality_4}
\end{align}
where $C_{1} =\frac{C(R,d)}{(4\pi)^{d/2}   \Gamma(d/2+1)}$ is a constant depending on $R$ and $d$.

By Assumption~\ref{ass2}, we get
\begin{equation}
\label{eqn:dung_distance_phi_Sigma}
    |\Phi_{\Sigma'}(\xi) - \Phi_\Sigma(\xi)| \lesssim \Xi(\|\Sigma - \Sigma'\|_{\mathrm{op}}) \|\xi\|^p \max\big(|\Phi_\Sigma(\xi)|, |\Phi_{\Sigma'}(\xi)|\big) 
\end{equation}
and thus, by super-smoothness,
\begin{align}
    \int_{\bar{B}(0, \Lambda)} \frac{|\Phi_{\Sigma'}(\xi) - \Phi_\Sigma(\xi)|}{|\Phi_\Sigma(\xi)|} d\xi & \lesssim \Xi(\|\Sigma - \Sigma'\|_{\mathrm{op}}) \int_{\bar{B}(0, \Lambda)} \|\xi\|^p \left[ \frac{\max\big(|\Phi_\Sigma(\xi)|, |\Phi_{\Sigma'}(\xi)|\big)}{|\Phi_\Sigma(\xi)|} \right] d\xi \nonumber \\
    & \leq \Xi(\|\Sigma - \Sigma'\|_{\mathrm{op}}) \int_{\bar{B}(0, \Lambda)} \|\xi\|^p  \exp((c_{1} - c_{2}) \|\xi\|^\alpha)d\xi \nonumber \\
    & \leq \Xi(\|\Sigma - \Sigma'\|_{\mathrm{op}}) \exp((c_{1} - c_{2}) \Lambda^\alpha)\int_{\bar{B}(0, \Lambda)} \|\xi\|^p  d\xi \nonumber \\
    & = C' \Xi(\|\Sigma - \Sigma'\|_{\mathrm{op}}) \Lambda^{d + p} \exp((c_{1} - c_{2}) \Lambda^\alpha)\label{eq:key_inequality_duality_5} 
\end{align}
for some $C'$ depending on $d$. Plugging the bounds in Equations~\eqref{eq:key_inequality_duality_4} and \eqref{eq:key_inequality_duality_5} into the bound in Equation~\eqref{eq:key_inequality_duality_3} yields that
\begin{align}
    \left|\int_{\bar{B}(0, \Lambda)} \mathcal{F}[\widetilde{h}_{\mathcal{X}, \Lambda}^{*}](\xi) \Big( \Phi_P(\xi) - \Phi_{P'}(\xi) \Big) d\xi\right| &\leq C_{2} \|p_{G} - p_{G'}\|_{1} \Lambda^{d} \exp(c_{1} \Lambda^{\alpha})  \nonumber \\
    & + C_{3} \Xi(\|\Sigma - \Sigma'\|_{\mathrm{op}}) \Lambda^{d + p} \exp((c_{1} - c_{2}) \Lambda^\alpha), \label{eqn:dung_key_inequality_duality_6} 
\end{align}
where $C_{2}$ and $C_{3}$ are constants depending on $R$, $\lambda_{\min}$, $\lambda_{\max}$, and $d$.
Since $\|p_G - p_{G'}\|_1 \gtrsim\Psi(\|\Sigma - \Sigma'\|_{\mathrm{op}})$ and $\Psi$ is strictly increasing, we have that $\Psi^{-1}(\|p_G - p_{G'}\|_1/C_0) \geq \|\Sigma - \Sigma'\|_{\mathrm{op}}$ for some $C_0>0$. 

Combining Equations~\eqref{eq:key_inequality_duality_1_2} and~\eqref{eqn:dung_key_inequality_duality_6} and noting that $\Xi$ is also a strictly increasing function, we obtain that
\begin{align*}
     W_1(P, P') &\leq \frac{C_{1}}{\Lambda} + C_{2} \|p_{G} - p_{G'}\|_{1} \Lambda^{d} \exp(c_{1} \Lambda^{\alpha})\\
     &\hspace{2cm }+ C_{3} (\Xi \circ \Psi^{-1})(\|p_G - p_{G'}\|_1/C_0) \Lambda^{d + p} \exp((c_{1} - c_{2}) \Lambda^\alpha),
\end{align*}
where $C_1 := 2(1 + 2RM_{\eta})M_{K}$. Choosing now $\Lambda = 2C_{1}/W_{1}(P,P')$ we arrive at the inequality 
\begin{align}
\nonumber
     1 &\leq \Big[ C_2' V   W_1(P,P')^{-(d + 1)} + C_3' (\Xi \circ \Psi^{-1})(V/C_0) W_1(P,P')^{-(d+p+1)} \Big] \\
     &\times\exp\Big( C_{4}'W_1(P,P')^{-\alpha} \Big),
\label{eqn:dung_thm1_1_less_than_W1_exp}
\end{align}
where we set $V = \|p_{G} - p_{G'}\|_{1}$, $C'_2 = 2C_2(2C_1)^d$, $C'_3 = 2C_3(2C_1)^{d+p}$, and $C'_4 = c_1(2C_1)^\alpha$. Using Lemma \ref{lemma:exp_larger_polynomial} for $p_1 = (d+1)/\alpha$ and $p_2 = (d+p+1)/\alpha$, there exist $M_{p_1,1}$ and $M_{p_2,1}$ depending only on $p_1$ and $p_2$ such that
\begin{align*}
    W_1(P,P')^{-(d+1)} & \leq M_{p_1,1}\exp(W_1(P,P')^{-\alpha}), \\
    W_1(P,P')^{-(d+p+1)} & \leq M_{p_2,1}\exp(W_1(P,P')^{-\alpha}).
\end{align*}

Combining these results with \eqref{eqn:dung_thm1_1_less_than_W1_exp}, we have
\begin{align*}
    1&\leq C'_2 M_{p_1,1} V   \exp((1+C_4')W_1(P,P')^{-\alpha}) \\
    &+ C'_3 M_{p_2,1}(\Xi \circ \Psi^{-1})(V/C_0)  \exp((1+C_4')W_1(P,P')^{-\alpha})\\
    &\leq C_2'' V  \exp(C_{\text{new}}W_1(P,P')^{-\alpha}) + C_3''(\Xi \circ \Psi^{-1})(V/C_0)  \exp(C_{\text{new}}W_1(P,P')^{-\alpha}),
\end{align*}
where $C_2'' = C'_2M_{p_1,1}$, $C_3'' =C_3'M_{p_2,1}$, and $C_{\text{new}} = 1+C_4'$. This inequality implies either $1/2 \leq C_2''V  \exp(C_{\text{new}}W_1(P,P')^{-\alpha})$ or $1/2 \leq C_3''(\Xi \circ \Psi^{-1})(V/C_0)  \exp(C_{\text{new}}W_1(P,P')^{-\alpha})$. The first case implies 
\begin{equation*}
    V \geq \dfrac{1}{2C_2''}\exp\left(-C_{\text{new}}\dfrac{1}{W_1(P,P')^{\alpha}}\right),
\end{equation*}
while the second case implies 
\begin{equation*}
    V \geq C_0 (\Psi\circ \Xi^{-1})\left(\frac{1}{2C_3''}\exp\left(-C_{\text{new}}\frac{1}{W_{1}(P,P')^{\alpha}}\right)\right).
\end{equation*}
As a result, we have 
\begin{equation*}
    V \geq C_{1}\min\left(\exp\Big(-\frac{C}{W_1(P, P')^\alpha} \Big), \;(\Psi \circ \Xi^{-1})\Big(\widetilde C\exp\Big(-\frac{C}{W_1(P, P')^\alpha}\Big)\Big)\right),
\end{equation*}
where $C = C_{\text{new}}$, $C_1 = \min\{1,1/2C''_2, C_0\}$, and $\widetilde C = 1/2C_3''$. Combining with Assumption~\ref{ass1}, this completes the proof.

\subsection*{Proof of Theorem~\ref{theorem:ordinary_rate}}\label{proof_thm_ordinary_rate}

Following the same duality, extension, cutoff, and spectral bandlimiting arguments as in the proof of Theorem~\ref{theorem:supersmooth_rate}, we arrive at
\begin{align}
    W_1(P, P') \leq \frac{C_{1}}{\Lambda} + \frac{1}{(2\pi)^d} \int_{\bar{B}(0, \Lambda)} \mathcal{F}[\widetilde{h}_{\mathcal{X}, \Lambda}^{*}](\xi) \Big( \Phi_P(\xi) - \Phi_{P'}(\xi) \Big) d\xi, \label{eq:key_inequality_duality_ordinary_1}
\end{align}
where $\Lambda > 0$ is to be chosen later, $\widetilde{h}_{\mathcal{X}, \Lambda}^{*}$ is defined in the proof of Theorem~\ref{theorem:supersmooth_rate}, and $C_1 := 2(1 + 2RM_{\eta})M_{K}$. Using the same decomposition as in Equation~\eqref{eq:key_inequality_duality_3}, recalling that $\Phi_{p_G}(\xi) = \Phi_{P}(\xi)\Phi_{\Sigma}(\xi)$ and setting $C(R,d) := \frac{\pi^{d/2}}{\Gamma(d/2 + 1)} (2R)^{d + 1}$, we have
\begin{align}
    &\left|\int_{\bar{B}(0, \Lambda)} \mathcal{F}[\widetilde{h}_{\mathcal{X}, \Lambda}^{*}](\xi) \Big( \Phi_P(\xi) - \Phi_{P'}(\xi) \Big) d\xi\right| \nonumber \\
    &\quad \leq \frac{C(R,d)}{(2\pi)^{d}} \int_{\bar{B}(0, \Lambda)} \frac{|\Phi_{p_{G}}(\xi) - \Phi_{p_{G'}}(\xi)|}{|\Phi_\Sigma(\xi)|} d\xi  + \frac{C(R,d)}{(2\pi)^{d}}  \int_{\bar{B}(0, \Lambda)} \frac{|\Phi_{\Sigma'}(\xi) - \Phi_\Sigma(\xi)|}{|\Phi_\Sigma(\xi)|} d\xi. \label{eq:key_inequality_duality_ordinary_2}
\end{align}
We now bound each term separately.

As for the first term, since $|\Phi_{p_{G}}(\xi) - \Phi_{p_{G'}}(\xi)| \leq \|p_{G} - p_{G'}\|_{1}$ for all $\xi \in \mathbb{R}^{d}$, and the ordinary-smoothness assumption gives $|\Phi_{\Sigma}(\xi)| \geq c_1(1+\|\xi\|^{\beta})^{-1}$, an application of Lemma~\ref{lem:integral_inequality} yields
\begin{align}
\frac{C(R,d)}{(2\pi)^{d}} \int_{\bar{B}(0, \Lambda)} \frac{|\Phi_{p_{G}}(\xi) - \Phi_{p_{G'}}(\xi)|}{|\Phi_\Sigma(\xi)|} d\xi  & \leq  \frac{C(R,d)}{c_1(2\pi)^{d}} \|p_{G} - p_{G'}\|_{1} \int_{\bar{B}(0, \Lambda)} (1+\|\xi\|^{\beta}) d\xi \nonumber \\
& \leq C'_{2} \|p_{G} - p_{G'}\|_{1} \max\{\Lambda^{d+\beta},\Lambda^{d}\},
\label{eq:key_inequality_duality_ordinary_3}
\end{align}
where $C'_2$ is a constant depending on $R$, $d$, and $c_1$.

As for the second term, from Assumption~\ref{ass2},
$$|\Phi_{\Sigma'}(\xi) - \Phi_\Sigma(\xi)| \lesssim \Xi(\|\Sigma - \Sigma'\|_{\mathrm{op}}) \|\xi\|^p \max\big(|\Phi_\Sigma(\xi)|, |\Phi_{\Sigma'}(\xi)|\big).$$
Moreover, by ordinary-smoothness,  $\max\big(|\Phi_\Sigma(\xi)|, |\Phi_{\Sigma'}(\xi)|\big) \leq c_2(1+\|\xi\|^{\beta})^{-1}$, so that
\begin{equation*}
    \frac{\max\big(|\Phi_\Sigma(\xi)|, |\Phi_{\Sigma'}(\xi)|\big)}{|\Phi_\Sigma(\xi)|} \leq \frac{c_2(1+\|\xi\|^{\beta})^{-1}}{c_1(1+\|\xi\|^{\beta})^{-1}} = \frac{c_2}{c_1}.
\end{equation*}
Thus, another application of Lemma~\ref{lem:integral_inequality} yields
\begin{align}
    \int_{\bar{B}(0, \Lambda)} \frac{|\Phi_{\Sigma'}(\xi) - \Phi_\Sigma(\xi)|}{|\Phi_\Sigma(\xi)|} d\xi
    & \lesssim \frac{c_2}{c_1}\Xi(\|\Sigma - \Sigma'\|_{\mathrm{op}}) \int_{\bar{B}(0, \Lambda)} \|\xi\|^p \, d\xi \nonumber \\
    & = C'_3\, \Xi(\|\Sigma-\Sigma'\|_{\mathrm{op}})\Lambda^{d+p},
    \label{eq:key_inequality_duality_ordinary_4}
\end{align}
where $C'_3$ is a constant depending on $R$, $d$, $\lambda_{\min}$, $\lambda_{\max}$, and $p$. Substituting Equations~\eqref{eq:key_inequality_duality_ordinary_3} and \eqref{eq:key_inequality_duality_ordinary_4} into Equation~\eqref{eq:key_inequality_duality_ordinary_2}, and in turn into \eqref{eq:key_inequality_duality_ordinary_1}, we arrive at the bound
\begin{align}
    W_1(P,P') &\leq \frac{C_1}{\Lambda} + C'_{2} \|p_{G} - p_{G'}\|_{1} \max\{\Lambda^{d+\beta},\Lambda^{d}\} \nonumber\\
    &+ C'_{3}\,(\Xi \circ\Psi^{-1})(\|p_G - p_{G'}\|_1/C_0)\, \Lambda^{d + p}, \label{eq:key_inequality_duality_ordinary_5}
\end{align}
where we used $\Xi(\|\Sigma-\Sigma'\|_{\mathrm{op}}) \leq (\Xi\circ\Psi^{-1})(\|p_G-p_{G'}\|_1/C_0)$, which follows from $\|p_G - p_{G'}\|_1 \geq C_0\,\Psi(\|\Sigma - \Sigma'\|_{\mathrm{op}})$ for some $C_0$ (by Assumption~\ref{ass1}) and the fact that both $\Psi$ and $\Xi$ are strictly increasing. We now distinguish two cases, based on the size of $W_1(P,P')$.

\paragraph*{Case 1: $W_1(P,P') \geq 2C_1$.} Setting $\Lambda = 1$ in Equation~\eqref{eq:key_inequality_duality_ordinary_5} and using $W_1(P,P') - C_1 \geq W_1(P,P')/2$, we get
\begin{equation*}
    W_1(P,P') \leq 2C'_2\|p_{G} - p_{G'}\|_{1} + 2C_3'\,(\Xi \circ\Psi^{-1})(\|p_G - p_{G'}\|_1/C_0).
\end{equation*}
Hence either $W_1(P,P')\leq 4C'_2\|p_{G} - p_{G'}\|_{1}$ or $W_1(P,P') \leq 4C_3'\,(\Xi \circ\Psi^{-1})(\|p_G - p_{G'}\|_1/C_0)$. Since $\mathrm{supp}(P), \mathrm{supp}(P')\subseteq \bar{B}(0,R)$ implies $W_1(P,P') \leq 2R$, we have $W_1(P,P') \geq (2R)^{-(d+\beta)} W_1(P,P')^{d+\beta+1}$ and similarly with $d+p$ in place of $d+\beta$. Combining these observations, there exist constants $C'_{\mathrm{I}}$ and $c'_{\mathrm{I}}$ depending on $R$, $d$, $\lambda_{\min}$, $\lambda_{\max}$, and $p$ such that
\begin{align}
    \|p_{G} - p_{G'}\|_{1} \geq C'_{\mathrm{I}}\min\Big\{W_1(P,P')^{d+\beta+1},\,(\Psi\circ \Xi^{-1})\big(c'_{\mathrm{I}}\,W_1(P,P')^{d+p+1}\big)\Big\}.
    \label{eq:ordinary_case1}
\end{align}

\paragraph*{Case 2: $W_1(P,P') < 2C_1$.} Setting $\Lambda = 2C_1/W_1(P,P')$ in Equation~\eqref{eq:key_inequality_duality_ordinary_5} gives
\begin{align*}
    1 &\leq C'_{2,\mathrm{II}}\, \|p_{G} - p_{G'}\|_{1} W_1(P,P')^{-(d+\beta + 1)} \\
    &+ C'_{3,\mathrm{II}}\,(\Xi \circ\Psi^{-1})(\|p_G-p_{G'}\|_1/C_0) W_1(P,P')^{-(d+p+1)},
\end{align*}
where $C'_{2,\mathrm{II}} = 2C'_2(2C_1)^{d+\beta}$ and $C'_{3,\mathrm{II}} = 2C'_3(2C_1)^{d+p}$. Hence either the first or second summand is at least $1/2$, yielding respectively $\|p_G-p_{G'}\|_1\geq  W_1(P,P')^{d+\beta + 1}/2C'_{2,\mathrm{II}}$ or $\|p_G-p_{G'}\|_1\geq C_0\,(\Psi \circ\Xi^{-1})(W_1(P,P')^{d+p+1}/2C'_{3,\mathrm{II}})$. Combining, there exists a constant $C>0$ such that
\begin{align}
    \|p_G - p_{G'}\|_1 \gtrsim\min\Big\{W_1(P,P')^{d+\beta+1},\,(\Psi\circ \Xi^{-1})\big(CW_1(P,P')^{d+p+1}\big)\Big\}.
    \label{eq:ordinary_case2}
\end{align}

\medskip
Combining \eqref{eq:ordinary_case1} and \eqref{eq:ordinary_case2} with Assumption~\ref{ass1}, we conclude that
\begin{equation*}
    \|p_G-p_{G'}\|_1 \gtrsim \min\Big\{W_1(P,P')^{d+\beta+1},\,(\Psi \circ\Xi^{-1})\big(C\,W_1(P,P')^{d+p+1}\big)\Big\} + \Psi(\|\Sigma-\Sigma'\|_{\mathrm{op}}),
\end{equation*}
where the implicit constant and $C$ depend on $R$, $\lambda_{\min}$, $\lambda_{\max}$, and $d$. This completes the proof.

\subsection*{Proof of Proposition~\ref{pro:PDE_implies_ordinary}}
\label{proof_pro_PDE_implies_ordinary}
Let $P = \delta_0$, so that $G = \delta_0 \times \delta_\Sigma \in \mathscr{G}$, and the mixture density $p_G$ corresponds to
$$ p_G(x) = \int_{\mathbb{R}^d} f(x \mid \theta, \Sigma) \,\delta_0(d\theta) = f(x \mid 0, \Sigma), \quad x \in \mathbb{R}^d.$$
By the PDE inversion condition, we have $\mathcal{T}_{\Sigma}f(\cdot\mid 0,\Sigma) = \delta_{0}$. Applying the Fourier transform to both sides yields $\mathcal{F}[\mathcal{T}_{\Sigma}f(\cdot\mid 0,\Sigma)](\xi) = 1$, for all $\xi \in \mathbb{R}^d$. Consequently, we have, for all $\xi \in \mathbb{R}^d$, that
\begin{align*}
    1 &= \mathcal{F}[\mathcal{T}_{\Sigma}f(\cdot\mid 0,\Sigma)](\xi)\\ 
    & = \left(\sum_{\nu\in\mathbb N^d\,:\,0\leq |\nu| \leq \beta} c_{\nu}(\Sigma)(-i\xi)^{\nu}\right)\mathcal{F}[f(\cdot\mid 0,\Sigma)](\xi) \\
    &= \left(\sum_{\nu\in\mathbb N^d\,:\,0\leq |\nu| \leq \beta} c_{\nu}(\Sigma)(-i\xi)^{\nu}\right)\Phi_{\Sigma}(-\xi).
\end{align*}
Let $Q_\Sigma$ denote the characteristic polynomial of the differential operator $\mathcal{T}_\Sigma$, namely,
\begin{equation}
\label{eqn:dung_lemma_PDE_case_of_ordinary_smooth}
    Q_\Sigma(\xi) := \sum_{\nu\in\mathbb N^d\,:\,0\leq |\nu| \leq \beta} c_\nu(\Sigma) (-i\xi)^\nu, \quad \xi \in \mathbb{R}^d.
\end{equation}
Noting that $\Phi_{\Sigma}(\cdot)$ is an even function because, by assumption, $f(\cdot  \mid 0,\Sigma)$ is  even, we conclude that  $Q_\Sigma(\xi) \Phi_\Sigma(\xi) = 1$ for all $\xi \in \mathbb{R}^d$, which in turn implies that $|\Phi_\Sigma(\xi)| = |Q_\Sigma(\xi)|^{-1}$ for all $\xi \in \mathbb{R}^d$.

We first prove the lower bound $|\Phi_\Sigma(\xi)| \ge c_1(1 + \|\xi\|^\beta)^{-1}$ for some $c_1>0$, which is equivalent to showing $|Q_\Sigma(\xi)| \le C_{\text{upper}}(1 + \|\xi\|^\beta)$ for some $C_{\text{upper}}>0$. An application of the triangle inequality to the polynomial $Q_\Sigma(\xi)$ leads to 
$$ |Q_\Sigma(\xi)| \le \sum_{\nu\in\mathbb N^d\,:\,0\leq |\nu| \leq \beta} |c_\nu(\Sigma)|\cdot \|\xi\|^{|\nu|}. $$
Given the PDE inversion condition, the coefficients $c_\nu(\Sigma)$ are continuous mappings from the set of positive-definite matrices $\mathbb{S}_d^+(\lambda_{\min}, \lambda_{\max})$ to $\mathbb{R}$. Because $\mathbb{S}_d^+(\lambda_{\min}, \lambda_{\max})$ is a closed and bounded compact metric space, any continuous function defined on it is globally bounded. Thus, there exists a constant $M > 0$, depending only on $\lambda_{\min}$ and $\lambda_{\max}$, such that $\sup_{\Sigma} |c_\nu(\Sigma)| \le M$ for all $\nu\in\mathbb N^d$.

Furthermore, for any integer degree $0 \le |\nu| \le \beta$, we have
$$ \|\xi\|^{|\nu|} \le 1 + \|\xi\|^\beta \quad \text{for all } \xi \in \mathbb{R}^d. $$
Therefore, we obtain that
$$ |Q_\Sigma(\xi)| \le M \sum_{\nu\in\mathbb N^d\,:\,0\leq |\nu| \leq \beta} (1 + \|\xi\|^\beta) = M \binom{\beta+d}{d} (1 + \|\xi\|^\beta) =: C_{\text{upper}} (1 + \|\xi\|^\beta).$$

Next, we prove the upper bound $|\Phi_\Sigma(\xi)| \le c_2(1 + \|\xi\|^\beta)^{-1}$ for some $c_2>0$, which is equivalent to proving $|Q_\Sigma(\xi)| \ge C_{\text{lower}}(1 + \|\xi\|^\beta)$ for some $C_{\text{lower}}>0$. As $|\Phi_\Sigma(\xi)| \le 1$ for all $\xi \in \mathbb{R}^d$, it follows that
$$|Q_\Sigma(\xi)| = |\Phi_\Sigma(\xi)|^{-1} \ge 1.$$
Recall that $f(x \mid 0, \Sigma) = |\Sigma|^{-1/2} g(x^\top \Sigma^{-1} x)$. Using the change of variables $y = \Sigma^{-1/2}x$, we have that
$$\Phi_\Sigma(\xi) = \int_{\mathbb{R}^d} e^{i\xi^\top (\Sigma^{1/2} y)} g(y^\top y) \, dy = \int_{\mathbb{R}^d} e^{i (\Sigma^{1/2}\xi)^\top y} g(\|y\|^2) \, dy.$$
Define $I(v) := \int_{\mathbb{R}^d} e^{i v^\top y} g(\|y\|^2) \, dy$, where $v = \Sigma^{1/2}\xi$. For any orthogonal matrix $O$, we have
$$I(Ov) = \int_{\mathbb{R}^d} e^{i (Ov)^\top y} g(\|y\|^2) \, dy = \int_{\mathbb{R}^d} e^{i v^\top (O^\top y)} g(\|y\|^2) \, dy = I(v).$$
This rotational invariance means there exists a function $\psi$ such that $I(v) = \psi(\|v\|^2)$. Equivalently,
$$ \Phi_\Sigma(\xi) = \psi\big( \|\Sigma^{1/2}\xi\|^2 \big) = \psi(\xi^\top \Sigma \xi).$$
Therefore, we obtain $Q_\Sigma(\xi) = 1 / \psi(\xi^\top \Sigma \xi)$. By setting $y = \Sigma^{1/2}\xi$, it is clear that the function $\tilde{Q}(y) := Q_\Sigma(\Sigma^{-1/2}y) = 1/\psi(\|y\|^2)$ is a multivariate polynomial in $y$ that is invariant under any orthogonal rotation matrix $O$, since $\|Oy\|^2 = \|y\|^2$. Hence, it is a polynomial strictly in $\|y\|^2$. Denoting $\beta = 2m$, we can represent the polynomial $Q_{\Sigma}(\xi)$ as follows:
$$ Q_\Sigma(\xi) = \sum_{k=0}^{m} a_k(\Sigma) (\xi^\top \Sigma \xi)^k,$$ 
where all odd-degree coefficients disappear. From the formulation of $Q_{\Sigma}$ in Equation~\eqref{eqn:dung_lemma_PDE_case_of_ordinary_smooth}, we have $a_k(\Sigma) \in \mathbb{R}$ for $0\leq k \leq m$. By the first assumption in the PDE inversion condition, $a_m(\Sigma) \neq 0$. Furthermore, the fact that $|Q_{\Sigma}(\xi)| \geq 1$ for all $\xi$ implies that the leading coefficient must be strictly positive, $a_m(\Sigma) > 0$. 

Since the original coefficients $c_\nu(\Sigma)$ are continuous on the compact metric space $\mathbb{S}_d^+(\lambda_{\min}, \lambda_{\max})$, the derived polynomial coefficients $a_k(\Sigma)$ are also continuous on this compact set. Thus, the leading coefficient attains a strictly positive minimum $a_m^* := \min_{\Sigma} a_m(\Sigma) > 0$, and the remaining coefficients attain finite maximum magnitudes $A_k^* := \max_{\Sigma} |a_k(\Sigma)| < \infty$. We now establish a uniform lower bound for $|Q_{\Sigma}|$. An application of the reverse triangle inequality leads to
$$|Q_\Sigma(\xi)| \ge a_{m}(\Sigma) \left|\xi^\top \Sigma \xi\right|^{m} - \sum_{k=0}^{m - 1} |a_k(\Sigma)| |\xi^\top \Sigma \xi|^k.$$
Since $\lambda_{\min} \|\xi\|^2 \leq \xi^{\top} \Sigma \xi \leq \lambda_{\max} \|\xi\|^2$, it holds that
$$|Q_\Sigma(\xi)| \geq a_m^* \lambda_{\min}^{m} \|\xi\|^{2m} - \sum_{k = 0}^{m - 1} A_k^* \lambda_{\max}^{k} \|\xi\|^{2k}.$$
Hence, for $\|\xi\|\geq 1$, we have
$$ |Q_\Sigma(\xi)| \ge M_{1} \|\xi\|^\beta - M_{2} \|\xi\|^{\beta-2} = \|\xi\|^\beta \left(M_{1} - \frac{M_{2}}{\|\xi\|^2} \right),$$
where $M_{1} := a_m^* \lambda_{\min}^{m}$ and $M_{2} := \sum_{k = 0}^{m - 1} A_k^* \lambda_{\max}^{k}$ are strictly positive constants independent of $\Sigma$. By choosing $\|\xi\|  \ge \max\left\{\sqrt{2M_2/M_1},1\right\}$, we obtain that 
$$|Q_\Sigma(\xi)| \geq \frac{M_{1}}{2} \|\xi\|^{\beta}.$$
Combining this bound with the fact that $|Q_{\Sigma}(\xi)| \geq 1$ for all $\xi$, we arrive at
$$ |Q_\Sigma(\xi)| \ge C_{\text{lower}}(1 + \|\xi\|^\beta),$$
for a sufficiently small constant $C_{\text{lower}} > 0$ depending only on $\lambda_{\min}$, $\lambda_{\max}$, and the operator $\mathcal{T}_\Sigma$.
Putting these results together leads to the conclusion that the kernel $f$ is an ordinary-smooth function of order $\beta$.

\subsection*{Proof of Theorem~\ref{theorem:sharpen_ordinary_rate}}
\label{proof_thm_sharpen_ordinary_rate}

Let $\bar{h}^*_{\mathcal{X}}$ be the $1$-Lipschitz continuous function achieving the Kantorovich--Rubinstein supremum, constructed as in the proof of Theorem~\ref{theorem:supersmooth_rate}, and set $\widetilde{h}_{\mathcal{X}}^{*}(x) := \bar{h}_{\mathcal{X}}^{*}(x)\eta(x)$, with $\eta$ denoting the cutoff function constructed in Lemma~\ref{lemma:cutoff_function}. Rather than convolving with the Jackson kernel, we convolve $\widetilde{h}_{\mathcal{X}}^{*}$ with a scaled normalized mollifier in order to obtain a smooth proxy amenable to differentiation under the integral. Concretely, let $\psi$ be the mollifier defined in Lemma~\ref{lemma:cutoff_function}, and set $\psi_{\Lambda}(x) := \Lambda^d\psi(\Lambda x)$ for $\Lambda>0$.\footnote{Compared to Lemma~\ref{lemma:cutoff_function}, this is a slight abuse of notation, as in the Lemma $\psi_\epsilon(x)$ is defined as $\epsilon^{-d}\psi(x/\epsilon)$.} Define
$$\widetilde{h}_{\mathcal{X}, \Lambda}^{*}(x) := (\widetilde{h}_{\mathcal{X}}^{*} * \psi_\Lambda)(x) = \int_{\mathbb{R}^d} \widetilde{h}_{\mathcal{X}}^{*}(x - y)\, \psi_\Lambda(y)\, dy.$$
Since $\widetilde{h}_{\mathcal{X}}^{*}$ has support in $\bar{B}(0,2R)$ and $\psi_\Lambda$ has support in $\bar{B}(0,1/\Lambda)$, we have $\mathrm{supp}(\widetilde{h}_{\mathcal{X},\Lambda}^{*})\subseteq \bar{B}(0,2R+1/\Lambda)$. We split the rest of the proof into several steps.

\subsubsection*{Approximation bound} Direct computation gives
\begin{align*}
    \widetilde{h}_{\mathcal{X}, \Lambda}^{*}(x) - \widetilde{h}_{\mathcal{X}}^{*}(x) = \int_{\mathbb{R}^d} \big[\widetilde{h}_{\mathcal{X}}^{*}(x - y) - \widetilde{h}_{\mathcal{X}}^{*}(x)\big] \psi_\Lambda(y)\, dy, \quad x \in \mathbb{R}^d.
\end{align*}
Since $\widetilde{h}_{\mathcal{X}}^{*}$ is globally $(1+2RM_{\eta})$-Lipschitz continuous (Lemma~\ref{lemma:cutoff_function_properties}), for any $x \in \mathbb{R}^{d}$ we have
\begin{align*}
    |\widetilde{h}_{\mathcal{X}, \Lambda}^{*}(x) - \widetilde{h}_{\mathcal{X}}^{*}(x)| \leq (1+2RM_{\eta}) \int_{\mathbb{R}^d} \|y\| \Lambda^{d} \psi(\Lambda y)\, dy = \frac{(1 + 2RM_{\eta})M'_{\psi}}{\Lambda},
\end{align*}
where $M'_{\psi} := \int_{\mathbb{R}^d} \|y\|\, \psi(y)\, dy < \infty$. Hence,
\begin{align}
    \|\widetilde{h}_{\mathcal{X}, \Lambda}^{*}- \widetilde{h}_{\mathcal{X}}^{*}\|_{\infty} \leq \frac{(1 + 2RM_{\eta})M'_{\psi}}{\Lambda}. \label{eq:key_approximation_inequality_thm3}
\end{align}
Applying the same Hölder argument as in the proof of Theorem~\ref{theorem:supersmooth_rate} and setting $C_1' := 2(1 + 2RM_{\eta})M'_{\psi}$, we obtain
\begin{align}
W_1(P, P') \leq \frac{C_1'}{\Lambda} + \int_{\mathbb{R}^d}  \widetilde{h}_{\mathcal{X}, \Lambda}^{*}(\theta)\, (P(d\theta) - P'(d\theta)). \label{eq:key_inequality_thm3}
\end{align}

\subsubsection*{PDE inversion} Since $f$ satisfies the PDE inversion condition of order $\beta$, we have $\mathcal{T}_\Sigma p_G = P$ and $\mathcal{T}_{\Sigma'} p_{G'} = P'$ in the sense of distributions.\footnote{For standard references on distribution theory, see \citep{schwartz1966theorie,rudin1991functional,folland1999real}.} Therefore,
\begin{align*}
\int_{\mathbb{R}^d}  \widetilde{h}_{\mathcal{X}, \Lambda}^{*}(\theta)\,(P(d\theta) - P'(d\theta))
&= \langle \widetilde{h}_{\mathcal{X}, \Lambda}^{*},\, \mathcal{T}_\Sigma p_G - \mathcal{T}_{\Sigma'} p_{G'} \rangle \\
&= \langle \widetilde{h}_{\mathcal{X}, \Lambda}^{*},\, \mathcal{T}_{\Sigma}(p_G - p_{G'}) \rangle + \langle \widetilde{h}_{\mathcal{X}, \Lambda}^{*},\, (\mathcal{T}_{\Sigma} - \mathcal{T}_{\Sigma'}) p_{G'} \rangle \\
&= \langle \mathcal{T}^*_{\Sigma}\widetilde{h}_{\mathcal{X}, \Lambda}^{*},\, p_G - p_{G'} \rangle + \langle (\mathcal{T}^*_{\Sigma} - \mathcal{T}^*_{\Sigma'})\widetilde{h}_{\mathcal{X}, \Lambda}^{*},\, p_{G'}\rangle \\
&=: I_1 + I_2,
\end{align*}
where the use of the adjoint operators is justified by the fact that $\widetilde{h}_{\mathcal{X},\Lambda}^* \in C_c^\infty(\mathbb{R}^d)$.

\subsubsection*{Bound on $I_1$} By Hölder's inequality,
\begin{equation}
    |I_1| \leq \|p_{G}-p_{G'}\|_1\,\|\mathcal{T}_\Sigma^{*} \widetilde{h}_{\mathcal{X}, \Lambda}^{*}\|_\infty.
    \label{eq:I1_prelim}
\end{equation}
We bound $\|\mathcal{T}_\Sigma^{*} \widetilde{h}_{\mathcal{X}, \Lambda}^{*}\|_\infty$ by the triangle inequality:
\begin{align*}
    \|\mathcal{T}_\Sigma^*\widetilde{h}_{\mathcal{X}, \Lambda}^{*}\|_\infty \leq |c_0(\Sigma)|\,\|\widetilde{h}_{\mathcal{X}, \Lambda}^{*}\|_\infty + \sum_{\nu\in\mathbb N^d\, :\, 1 \le |\nu| \le \beta} |c_\nu(\Sigma)|\,\|\partial^\nu \widetilde{h}_{\mathcal{X}, \Lambda}^{*}\|_\infty.
\end{align*}
Applying the Fourier transform to both sides of $\mathcal{T}_\Sigma p_G = P$ and evaluating at $\xi=0$ (where $\Phi_{p_G}(0) = \Phi_P(0) = 1$) gives $c_0(\Sigma) \equiv 1$. By Young's inequality and $\|\widetilde{h}_{\mathcal{X}}^{*}\|_\infty \leq 2R$ (see the proof of Lemma~\ref{lemma:cutoff_function_properties}), we have $\|\widetilde{h}_{\mathcal{X},\Lambda}^{*}\|_\infty \leq 2R$. For the higher-order terms, we claim that $\|\partial^\nu \widetilde{h}_{\mathcal{X}, \Lambda}^{*}\|_\infty \leq C_{\mathrm{der}} \Lambda^{|\nu|-1}$ for all $\nu\in\mathbb N^d$ such that $1 \leq |\nu| \leq \beta$ and some $C_{\mathrm{der}}<\infty$. Indeed, fixing any canonical direction $e_m$ with $\nu_m \geq 1$ and writing $\nu' = \nu - e_m$, the convolution structure gives
$$\partial^\nu \widetilde{h}_{\mathcal{X}, \Lambda}^{*} = (\partial^{e_m} \widetilde{h}_{\mathcal{X}}^{*}) * (\partial^{\nu'} \psi_\Lambda).$$
By Young's inequality and $\|\nabla \widetilde{h}_{\mathcal{X}}^{*}\|_\infty \leq 1 + 2RM_{\eta}$ (Lemma~\ref{lemma:cutoff_function_properties}),
$$\|\partial^\nu \widetilde{h}_{\mathcal{X}, \Lambda}^{*}\|_\infty \leq (1+2RM_{\eta})\|\partial^{\nu'} \psi_\Lambda\|_1.$$
A change of variables and the chain rule give $\|\partial^{\nu'}\psi_\Lambda\|_1 = \Lambda^{|\nu'|}\|\partial^{\nu'}\psi\|_1 \leq C_{\psi,\beta}\Lambda^{|\nu|-1}$, where $C_{\psi,\beta}:=\max_{\nu\in\mathbb N^d : 1\leq|\nu'|\leq\beta}\|\partial^{\nu'}\psi\|_1 < \infty$. Hence $C_{\mathrm{der}} := C_{\psi,\beta}(1+2RM_\eta)$.

Since the eigenvalues of $\Sigma$ lie in $[\lambda_{\min},\lambda_{\max}]$ and the coefficients $c_\nu(\Sigma)$ depend smoothly on $\Sigma$, we have $\max_{\nu\in\mathbb N^d :1\leq |\nu|\leq\beta}|c_\nu(\Sigma)| \leq M_\beta<\infty$ (where $M_\beta$ is independent of $\Sigma$). Setting $C_1'' := C_{\mathrm{der}}M_\beta\binom{d+\beta}{d} + 2R$, for $\Lambda \geq 1$ we obtain
$$\|\mathcal{T}_\Sigma^* \widetilde{h}_{\mathcal{X}, \Lambda}^{*}\|_\infty \leq C_1''(1 + \Lambda^{\beta - 1}),$$
and therefore
\begin{equation}
    |I_1| \leq C_1''(1 + \Lambda^{\beta - 1})\|p_G - p_{G'}\|_1.
    \label{eq:I1_bound}
\end{equation}

\subsubsection*{Bound on $I_2$} By Hölder's inequality,
\begin{equation}
    |I_2| \leq \|(\mathcal{T}_\Sigma^* - \mathcal{T}_{\Sigma'}^*) \widetilde{h}_{\mathcal{X}, \Lambda}^{*}\|_\infty.
    \label{eq:I2_prelim}
\end{equation}
The Lipschitz continuity of the coefficients $\Sigma\mapsto c_\nu(\Sigma)$ gives $|c_\nu(\Sigma) - c_\nu(\Sigma')| \leq L\|\Sigma - \Sigma'\|_{\mathrm{op}}$ uniformly in $\nu$. Combining with the derivative bound obtained before and setting $C_2'' := CL\binom{d+\beta}{d}$, we get
\begin{align}
    \|(\mathcal{T}_\Sigma^* - \mathcal{T}_{\Sigma'}^*) \widetilde{h}_{\mathcal{X}, \Lambda}^{*}\|_\infty &\leq \sum_{\nu\in\mathbb R^d\,:\,1 \le |\nu| \le \beta} |c_\nu(\Sigma) - c_\nu(\Sigma')|\,\|\partial^\nu \widetilde{h}_{\mathcal{X}, \Lambda}^{*}\|_\infty \nonumber\\
    &\leq C_2''\|\Sigma - \Sigma'\|_{\mathrm{op}}(1+\Lambda^{\beta - 1}),
    \label{eq:I2_bound}
\end{align}
and therefore $|I_2| \leq C_2''\|\Sigma - \Sigma'\|_{\mathrm{op}}(1+\Lambda^{\beta-1})$.

\subsubsection*{Conclusion} Combining Equations~\eqref{eq:key_inequality_thm3}, \eqref{eq:I1_bound}, and \eqref{eq:I2_bound}, and using $\|\Sigma-\Sigma'\|_{\mathrm{op}} \leq \Psi^{-1}(\|p_G-p_{G'}\|_1/C_0)$ for some $C_0>0$ (by Assumption~\ref{ass1}), we obtain, for $\Lambda \geq 1$,
\begin{align}
    W_1(P, P') \leq \frac{C_1'}{\Lambda} + C_1''(1 + \Lambda^{\beta - 1})\|p_G - p_{G'}\|_1 + C_2''\Psi^{-1}(\|p_G - p_{G'}\|_1/C_0)(1+ \Lambda^{\beta - 1}). \label{eq:W1_thm3_combined}
\end{align}
We now distinguish two cases based on the size of $W_1(P,P')$.

\paragraph*{Case 1: $W_1(P,P') \geq 2C_1'$.} Setting $\Lambda=1$ in Equation~\eqref{eq:W1_thm3_combined} and using $W_1(P,P') - C_1' \geq W_1(P,P')/2$, we get
\begin{equation*}
    W_1(P, P') \leq 4C_1''\|p_G - p_{G'}\|_1 + 4C_2''\Psi^{-1}(\|p_G - p_{G'}\|_1/C_0).
\end{equation*}
Hence either $W_1(P,P') \leq 8C_1''\|p_G-p_{G'}\|_1$ or $W_1(P,P') \leq 8C_2''\Psi^{-1}(\|p_G-p_{G'}\|_1/C_0)$. Since $\mathrm{supp}(P),\mathrm{supp}(P')\subseteq\bar{B}(0,R)$ implies $W_1(P,P')\leq 2R$, we have $W_1(P,P') \leq (2R)^{1-\beta}W_1(P,P')^\beta$. Combining, there exist constants $C_{\mathrm{I}}''$ and $c_{\mathrm{I}}''$ such that
\begin{align}
    \|p_G - p_{G'}\|_1 \geq C_{\mathrm{I}}''\min\Big\{W_1(P,P')^{\beta},\,\Psi\big(c_{\mathrm{I}}''\,W_1(P,P')^{\beta}\big)\Big\}.
    \label{eq:thm3_case1}
\end{align}

\paragraph*{Case 2: $W_1(P,P') < 2C_1'$.} Setting $\Lambda = 2C_1'/W_1(P,P') > 1$ in Equation~\eqref{eq:W1_thm3_combined} and letting $C_{1,\mathrm{II}}'' := 4C_1''(2C_1')^{\beta-1}$ and $C_{2,\mathrm{II}}'' := 4C_2''(2C_1')^{\beta-1}$, we obtain
\begin{equation*}
    1 \leq C_{1,\mathrm{II}}'' W_1(P,P')^{-\beta}\|p_G-p_{G'}\|_1 + C_{2,\mathrm{II}}'' W_1(P,P')^{-\beta}\Psi^{-1}(\|p_G-p_{G'}\|_1/C_0).
\end{equation*}
Hence either the first or the second summand is at least $1/2$, yielding respectively $\|p_G-p_{G'}\|_1 \geq W_1(P,P')^\beta / (2C_{1,\mathrm{II}}'')$ or $\|p_G-p_{G'}\|_1 \geq C_0\,\Psi(W_1(P,P')^\beta/(2C_{2,\mathrm{II}}''))$. Combining, there exist constants $C_{\mathrm{II}}''$ and $c_{\mathrm{II}}''$ such that
\begin{align}
    \|p_G - p_{G'}\|_1 \geq C_{\mathrm{II}}''\min\Big\{W_1(P,P')^{\beta},\,\Psi\big(c_{\mathrm{II}}''\,W_1(P,P')^{\beta}\big)\Big\}.
    \label{eq:thm3_case2}
\end{align}

\medskip
Combining Equations~\eqref{eq:thm3_case1} and \eqref{eq:thm3_case2} with Assumption~\ref{ass1}, we conclude that
\begin{equation*}
    \|p_G-p_{G'}\|_1 \gtrsim \min\Big\{W_1(P,P')^{\beta},\,\Psi\big(C\,W_1(P,P')^{\beta}\big)\Big\} + \Psi(\|\Sigma-\Sigma'\|_{\mathrm{op}}),
\end{equation*}
where the implicit constant and $C$ depend on $R$, $\lambda_{\min}$, $\lambda_{\max}$, and $d$. This completes the proof.

\subsection*{Proof of Theorem~\ref{thm:inequality_gaussian}}
\label{proof_thm_inequality_gaussian}
We split the proof in four parts, as follows.

\paragraph*{Verification of Assumption~\ref{ass2}.}

The characteristic function of the Gaussian kernel is $\Phi_\Sigma(\xi) = \exp(-\xi^\top\Sigma\xi/2)$. Setting $\psi(t):=e^{-t/2}$, so that $\psi'(t) = -\psi(t)/2$, the mean value theorem gives a $\tau\in(0,1)$ such that
$$\Phi_{\Sigma'}(\xi) - \Phi_\Sigma(\xi) = \psi(\xi^\top\Sigma'\xi) - \psi(\xi^\top\Sigma\xi) = \psi'(\xi^\top\Sigma(\tau)\xi)\cdot\xi^\top(\Sigma'-\Sigma)\xi,$$
where $\Sigma(\tau) = (1-\tau)\Sigma'+\tau\Sigma$. Hence,
\begin{align*}
    |\Phi_{\Sigma'}(\xi) - \Phi_\Sigma(\xi)| &\leq \tfrac{1}{2}\,\psi(\xi^\top\Sigma(\tau)\xi)\,\|\Sigma'-\Sigma\|_{\mathrm{op}}\|\xi\|^2\\
    &\leq \tfrac{1}{2}\max\big\{|\Phi_\Sigma(\xi)|,|\Phi_{\Sigma'}(\xi)|\big\}\|\Sigma'-\Sigma\|_{\mathrm{op}}\|\xi\|^2,
\end{align*}
so Assumption~\ref{ass2} holds with $p=2$ and $\Xi(t)=t$.

\paragraph*{Verification of Assumption~\ref{ass1}: univariate case.}

Consider two univariate mixture densities
$$p_G(x) = \int f(x\mid\theta,\sigma^2)\,P(d\theta) \qquad\text{and}\qquad p_{G'}(x) = \int f(x\mid\theta,\sigma'^2)\,P'(d\theta),$$
where $f(\cdot\mid\theta,\sigma^2)$ denotes the univariate Gaussian kernel with mean $\theta$ and variance $\sigma^2$. Without loss of generality, suppose $\sigma^2\geq\sigma'^2$ and set $\gamma^2:=\sigma^2-\sigma'^2$. By the semigroup property of Gaussian convolutions, $f(\cdot\mid0,\sigma^2) = f(\cdot\mid0,\gamma^2)*f(\cdot\mid0,\sigma'^2)$, so that
$$p_G = P*f(\cdot\mid0,\sigma^2) = \big(P*f(\cdot\mid0,\gamma^2)\big)*f(\cdot\mid0,\sigma'^2) =: \tilde{P}*f(\cdot\mid0,\sigma'^2),$$
where $\tilde{P}$ is the distribution of $X+\gamma Z$ with $X\sim P$ and $Z\sim f(\cdot\mid0,1)$, independent of each other.

For a positive integer $k$ to be specified later, consider the Hermite polynomial
$$H_{2k}(x;\sigma'^2) := \mathbb{E}_{Z\sim f(\cdot \mid 0,1)}\big[(x+i\sigma' Z)^{2k}\big].$$
By Lemma~\ref{lemma:gaussian_property}, $\int_{\mathbb{R}} H_{2k}(x;\sigma'^2)\,f(x\mid\theta,\sigma'^2)\,dx = \theta^{2k}$, so that
\begin{align*}
    \Delta M_{2k} &:= \int\theta^{2k}\,\tilde{P}(d\theta) - \int\theta^{2k}\,P'(d\theta) = \int_{\mathbb{R}} H_{2k}(x;\sigma'^2)\big(p_G(x)-p_{G'}(x)\big)\,dx.
\end{align*}
We now lower-bound $\Delta M_{2k}$. Since $\tilde{P}$ is the law of $X+\gamma Z$,
\begin{align*}
    \int\theta^{2k}\,\tilde{P}(d\theta) = \sum_{j=0}^k\binom{2k}{2j}\mathbb{E}_P\big[X^{2k-2j}\big]\gamma^{2j}\mathbb{E}\big[Z^{2j}\big] \geq \gamma^{2k}(2k-1)!!.
\end{align*}
Using $(2k)!\geq(2k/e)^{2k}$ and $k!\leq k^k$, we get $(2k-1)!!=\frac{(2k)!}{2^k k!}\geq(2k/e^2)^k$. Since $P'$ is supported on $[-R,R]$, we have $\int\theta^{2k}\,P'(d\theta)\leq R^{2k}$, and therefore
\begin{equation}
    \Delta M_{2k} \geq \left(\frac{2k}{e^2}\right)^k\gamma^{2k} - R^{2k}. \label{eq:lower_DeltaM}
\end{equation}
To  obtain an upper-bound on $\Delta M_{2k}$, set $V:=\|p_G-p_{G'}\|_1$ and $\Lambda:=R+L$, for a $L>0$ to be specified. From the definition of $H_{2k}$,
$$|H_{2k}(x;\sigma'^2)|\leq 2^{k-1}\big(x^{2k}+\sigma'^{2k}(2k-1)!!\big)\leq 2^{k-1}\big(x^{2k}+\lambda_{\max}^k 2^k k^k\big).$$
For any choice of $|x|>\Lambda$,  $\theta\in[-R,R]$ and $\sigma\in[\lambda_{\min},\lambda_{\max}]$, the Gaussian kernel satisfies $f(x\mid\theta,\sigma^2)\leq C_0\exp(-(|x|-R)^2/2\lambda_{\max}^2)$ with $C_0=(2\pi\lambda_{\min}^2)^{-1/2}$, so that $|p_G(x)|,|p_{G'}(x)|\leq C_0\exp(-(|x|-R)^2/2\lambda_{\max}^2)$. Splitting the integral at $|x|=\Lambda$ and applying Lemma~\ref{lemma:dung_tail_gaussian_estimation} to bound the tail, we obtain
\begin{align}
    \Delta M_{2k} \leq C_2^k\big(V+e^{-L^2/2\lambda_{\max}^2}\big)\big(R^{2k}+L^{2k}+k^k\big), \label{eq:upper_DeltaM}
\end{align}
where $C_2>0$ depends only on $\lambda_{\min}$, $\lambda_{\max}$, and $R$. Combining Equations~\eqref{eq:lower_DeltaM} and \eqref{eq:upper_DeltaM} gives
\begin{equation}
    k^k\gamma^{2k} \leq R_1^k + C_3^k\big(V+e^{-L^2/2\lambda_{\max}^2}\big)\big(L^{2k}+k^k+R^{2k}\big), \label{eq:to_tune}
\end{equation}
where $R_1:=(Re)^2/2$ and $C_3:=C_2 e^2/2$.

We now tune $L$ and $k$. Set $L:=\lambda_{\max}(2\log(1/V))^{1/2}$, so that $e^{-L^2/2\lambda_{\max}^2}=V$ and
\begin{equation}
    k^k\gamma^{2k}\leq R_1^k + 2C_3^k V\big(R^{2k}+(2\lambda_{\max}^2)^k(\log(1/V))^k+k^k\big). \label{eq:tuned_k}
\end{equation}
Set $k:=\lfloor\log(1/V)/\log\log(1/V)\rfloor$. If $V<e^{-e}$, one has $k\leq\log(1/V)$, so that $V(\log(1/V))^k\leq 1$ and $V k^k\leq 1$. Substituting into \eqref{eq:tuned_k} and absorbing all terms into a geometric bound yields
$$\gamma^2\leq C_4\frac{\log\log(1/V)}{\log(1/V)},$$
where $C_4:=R_1+4C_3R^2+2C_3+4C_3\lambda_{\max}^2$. If $V\leq e^{-e^{8C_4}}$ (so that $\log\log(1/V)\geq 8C_4$) then, using $e^t\geq t^2/2$, we have that $C_4\log\log(1/V)\leq(\log(1/V))^{1/2}$, and thus  $\gamma^2\leq(\log(1/V))^{-1/2}$. Hence and
\begin{equation}\label{eq:small_V}
    V \geq \exp\!\left(-2C_4\frac{\log(1/\gamma^2)}{\gamma^2}\right),
\end{equation}
whenever $V\leq C_5:=\min\!\left\{e^{-e},e^{-e^{8C_4}}\right\}. $

On the other hand, if $V\geq C_5$, then, noting that the function $t\mapsto t\log t$ attains its minimum $-e^{-1}$ at $t=e^{-1}$, we have that
\begin{equation}
    V\geq C_5 \geq C_6\exp\!\left(-2C_4\frac{\log(1/\gamma^2)}{\gamma^2}\right), \label{eq:large_V}
\end{equation}
where $C_6:=C_5 e^{2C_4/e}$.
Combining Equations~\eqref{eq:small_V} and \eqref{eq:large_V} yields, with $M:=2C_4$ and $C':=\min\{1,C_6\}$,
\begin{equation}
    \|p_G-p_{G'}\|_1\geq C'\exp\!\left(-M\frac{\log(1/\gamma^2)}{\gamma^2}\right). \label{eq:univariate_conclusion}
\end{equation}
Thus, Assumption~\ref{ass1} is verified in the univariate case with $\Psi(t)=\exp(-M\log(1/t)/t)$.

\paragraph*{Verification of Assumption~\ref{ass1}: multivariate case.}

Let $u\in\mathbb{R}^d$ with $\|u\|=1$ be the principal unit eigenvector of $\Sigma-\Sigma'$, so that $u^\top(\Sigma-\Sigma')u = \|\Sigma-\Sigma'\|_{\mathrm{op}}$. Define the projected mixture densities
$$p_{G,u}(y):=\int f\big(y\mid u^\top\theta,\,u^\top\Sigma u\big)\,P(d\theta), \qquad p_{G',u}(y):=\int f\big(y\mid u^\top\theta,\,u^\top\Sigma' u\big)\,P'(d\theta),$$
which represent the marginal densities of $u^\top X$ under $p_G$ and $p_{G'}$ respectively. By Lemma~\ref{lemma:dung_total_variance_distance_estimation},
$$\|p_G-p_{G'}\|_1\geq\|p_{G,u}-p_{G',u}\|_1.$$
Setting $\sigma^2:=u^\top\Sigma u$, $\sigma'^2:=u^\top\Sigma'u$, and $\gamma^2:=|\sigma^2-\sigma'^2|=\|\Sigma-\Sigma'\|_{\mathrm{op}}$, the univariate bound \eqref{eq:univariate_conclusion} gives
$$\|p_{G,u}-p_{G',u}\|_1\geq C'\exp\!\left(-M\frac{\log(1/\gamma^2)}{\gamma^2}\right) = C'\exp\!\left(-M\frac{\log(1/\|\Sigma-\Sigma'\|_{\mathrm{op}})}{\|\Sigma-\Sigma'\|_{\mathrm{op}}}\right),$$
which establishes Assumption~\ref{ass1} with $\Psi(t)=\exp(-M\log(1/t)/t)$.

\paragraph*{Conclusion.}
With Assumptions 1 and 2 verified for $\Psi(t)=\exp(-M\log(1/t)/t)$, $\Xi(t)=t$, $p=2$, and super-smoothness of order $\alpha=2$ being obvious given the form of the Gaussian characteristic function, an application of Theorem~\ref{theorem:supersmooth_rate} completes the proof.

\subsection*{Proof of Theorem~\ref{thm:inequality_gaussian_isotropic}}
\label{proof_thm_inequality_gaussian_isotropic}

Before delving into the details of the proof, we remark that the key challenge in generalizing the argument below to the case of general $\Sigma$ and $\Sigma'$ matrices is due to the fact that, if we were to move from $\Sigma$ to $\Sigma'$ using the semi-group property, we would need to guarantee that $\Sigma - \Sigma'$ is a positive definite matrix, which is not always true unless we have further knowledge on the structure of $\Sigma$. For instance, as we do below, $\Sigma = \sigma^2 I_{d}$ works to this end, as positive-definiteness of $\Sigma - \Sigma'$ can be guaranteed by assuming, without loss of generality, that $\sigma > \sigma'$.

Therefore, without loss of generality, assume that $\sigma > \sigma'$. Define the variance gap $\Delta := \sigma^2 - \sigma'^2 > 0$ and $\widetilde{P} = P * f(\cdot\mid0, \Delta \cdot I_d)$, so that $\widetilde{P}$ is a continuous probability measure with a smooth density given by
$$\widetilde{P}(dy) = \left( \int_{\mathbb{R}^d} \frac{1}{(2\pi\Delta)^{d/2}} \exp\left( - \frac{\|y - \theta\|^2}{2\Delta} \right) P(d\theta) \right) dy.$$
Due to the Gaussian semigroup property, we have $p_G(x) \equiv \int f(x\mid\theta, \sigma'^2 I_d) \widetilde{P}(d\theta)$. To estimate $W_1(P,\tilde{P})$, we consider the coupling $(X,Y)$ such that $X \sim P$, $Y = X+Z$ where $Z \sim \mathcal{N}(0,\Delta \cdot I_d)$ and independent from $X$, which means that
$$ W_1(P, \widetilde{P})\leq \mathbb{E}[\|Y-X\|_2] \le \mathbb{E}[\|Z\|_2] \le \sqrt{\mathbb{E}[\|Z\|_2^2]} = \sqrt{\mathrm{Tr}(\Delta I_d)} = \sqrt{d\Delta}.$$
Therefore, an application of the triangle inequality leads to
\begin{align}
W_{1}(P, P') \leq W_{1}(P, \widetilde{P}) + W_{1}(\widetilde{P}, P') \leq \sqrt{d\Delta} + W_{1}(\widetilde{P}, P'). \label{eq:key_equality_Gaussian_0}
\end{align}

Using Lemma \ref{lemma:dung_ascoli_for_non_compact_measure}, there exists a function $h^*_{\mathcal{X}}\in \mathrm{Lip}_1(\mathbb{R}^{d})$, with $h^*_{\mathcal{X}}(0) = 0$, which is a solution of Kantorovich-Rubinstein dual problem: 
$$ W_1(\widetilde{P}, P') =  \int_{\mathbb{R}^{d}} h^*_{\mathcal{X}}(\theta) (\widetilde{P}(d\theta) - P'(d\theta)).$$

Lemma~\ref{lemma:cutoff_function} guarantees the existence of a cutoff function $\eta: \mathbb{R}^d \to \mathbb{R}$ in $C_c^\infty(\mathbb{R}^d)$ which takes values in $[0,1]$, evaluates identically to $1$ on the closed ball $\bar{B}(0, R)$ (where $P'$ is supported), and vanishes outside the open ball $B(0, 2R)$. Hence, given $\eta$, we denote
$$\widetilde{h}^*_{\mathcal{X}}(x) := h^*_{\mathcal{X}}(x) \eta(x)$$
for all $x \in \mathbb{R}^{d}$. As we show in Lemma~\ref{lemma:cutoff_function_properties}, $\widetilde{h}^*_{\mathcal{X}} \in L_{1}(\mathbb{R}^{d})$ and it is globally $(1 + 2RM_{\eta})$-Lipschitz continuous on $\mathbb{R}^{d}$, where $M_{\eta} = \|\nabla \eta\|_\infty$. By the properties of $\eta$ and noting that the support of $P$ and $P'$ are in $B(0,R)$, we also clearly have that
\begin{align*}
&\int_{\mathbb{R}^d} h^*_{\mathcal{X}}(\theta) (\widetilde{P}(d\theta) - P'(d\theta))\\
& = \int_{\mathbb{R}^{d}} (h^*_{\mathcal{X}}(\theta) - \widetilde{h}^*_{\mathcal{X}}(\theta)) (\widetilde{P}(d\theta) - P'(d\theta)) + \int_{\mathbb{R}^{d}} \widetilde{h}^*_{\mathcal{X}}(\theta) (\widetilde{P}(d\theta) - P'(d\theta)) \nonumber \\
& = \int_{\mathbb{R}^{d} \backslash B(0,R)} (h^*_{\mathcal{X}}(\theta) - \widetilde{h}^*_{\mathcal{X}}(\theta)) (\widetilde{P}(d\theta) - P'(d\theta)) +  \int_{\mathbb{R}^{d}} \widetilde{h}^*_{\mathcal{X}}(\theta) (\widetilde{P}(d\theta) - P'(d\theta)) \nonumber \\
& = \int_{\mathbb{R}^{d} \backslash B(0,R)} (h^*_{\mathcal{X}}(\theta) - \widetilde{h}^*_{\mathcal{X}}(\theta)) \widetilde{P}(d\theta) +  \int_{\mathbb{R}^{d}} \widetilde{h}^*_{\mathcal{X}}(\theta) (\widetilde{P}(d\theta) - P'(d\theta)) \nonumber \\
& = \int_{\mathbb{R}^{d} \backslash B(0,R)} (h^*_{\mathcal{X}}(\theta) - \widetilde{h}^*_{\mathcal{X}}(\theta)) (\widetilde{P}(d\theta) - P(d\theta)) +  \int_{\mathbb{R}^{d}} \widetilde{h}^*_{\mathcal{X}}(\theta) (\widetilde{P}(d\theta) - P'(d\theta)).
\end{align*}
It is clear that $h^*_{\mathcal{X}}(\theta) - \widetilde{h}^*_{\mathcal{X}}(\theta) = h^*_{\mathcal{X}}(\theta)(1 -  \eta(\theta))$. Therefore, $\nabla (h^*_{\mathcal{X}}(\theta) - \widetilde{h}^*_{\mathcal{X}}(\theta)) = (1 - \eta(\theta))\nabla h^*_{\mathcal{X}}(\theta) - h^*_{\mathcal{X}}(\theta) \nabla \eta(\theta)$, so that
$$\|\nabla (h^*_{\mathcal{X}} - \widetilde{h}^*_{\mathcal{X}})\|_{\infty} \leq \|(1 - \eta)\nabla h^*_{\mathcal{X}}\|_{\infty} + \|h^*_{\mathcal{X}} \nabla \eta\|_{\infty}.$$
Since $h^*_{\mathcal{X}} \in \mathrm{Lip}_1(\mathbb{R}^{d})$, we have $|h^*_{\mathcal{X}}(\theta)| \leq \|\theta\|_{2}$. Furthermore, as $\eta$ is stricly supported on $B(0,2R)$, we have
$$\|h^*_{\mathcal{X}} \nabla \eta\|_{\infty} \leq 2R M_{\eta}.$$
Furthermore, as $\|\nabla h^*_{\mathcal{X}}\|_{\infty} \leq 1$ and $\eta \in [0,1]$, we have 
$$\|(1 - \eta)\nabla h^*_{\mathcal{X}}\|_{\infty} \leq 1.$$
Putting these bounds together, we arrive at
$$\|\nabla (h^*_{\mathcal{X}} - \widetilde{h}^*_{\mathcal{X}})\|_{\infty} \leq 1 + 2RM_{\eta}.$$
Therefore,
\begin{align}
\nonumber
\int_{\mathbb{R}^{d} \backslash B(0,R)} ({h}^*_{\mathcal{X}}(\theta) - \widetilde{h}^*_{\mathcal{X}}(\theta)) (\widetilde{P}(d\theta) - P(d\theta)) &= \int_{\mathbb{R}^{d}} ({h}^*_{\mathcal{X}}(\theta) - \widetilde{h}^*_{\mathcal{X}}(\theta)) (\widetilde{P}(d\theta) - P(d\theta)) \\
&\leq (1 + 2RM_{\eta}) W_{1}(\widetilde{P}, P), \label{eq:key_equality_Gaussian_2}
\end{align}
where the final inequality is due to the Kantorovich-Rubinstein duality theorem while the equality is due to ${h}^*_{\mathcal{X}}(\theta) = \widetilde{h}^*_{\mathcal{X}}(\theta)$ when $\theta \in B(0,R)$. 

Moving to $\int_{\mathbb{R}^{d}} \widetilde{h}^{*}(\theta) (\widetilde{P}(d\theta) - P'(d\theta))$, we can bound this term using the spectral bandlimiting via the Jackson kernel as in the proof of Theorem~\ref{theorem:supersmooth_rate}. In particular, we consider the multivariate Jackson kernel \citep{devore1993constructive,butzer1971fourier}.
$$K(x) := \bigg( \frac{3}{8\pi\sqrt{d}} \bigg)^d \prod_{i=1}^d \mathrm{sinc}^4\bigg(\frac{x_i}{4\sqrt{d}}\bigg), \quad x \in \mathbb{R}^{d},$$
where $\mathrm{sinc}(u) := \sin u/u$, which by construction satisfies $\int_{\mathbb{R}^d} K(x) dx = 1$. For any $\Lambda > 0$, define the function $K_{\Lambda}(x) := \Lambda^{d} K(\Lambda x)$. By Lemma~\ref{lemma:kernel_properties}, $\mathrm{supp}(\mathcal{F}[K]) \subseteq \bar{B}(0, 1)$, which implies that $\mathrm{supp}(\mathcal{F}[K_{\Lambda}]) \subseteq \bar{B}(0, \Lambda)$. Consider the function
$$\widetilde{h}_{\mathcal{X},\Lambda}^{*}(x) := (\widetilde{h}^{*}_{\mathcal{X}} * K_\Lambda)(x) = \int_{\mathbb{R}^d} \widetilde{h}^{*}_{\mathcal{X}}(x - y) K_\Lambda(y) dy.$$
Direct computation yields that
\begin{align*}
    \widetilde{h}_{\mathcal{X},\Lambda}^{*}(x) - \widetilde{h}_{\mathcal{X}}^{*}(x) & = \int_{\mathbb{R}^d} \widetilde{h}^{*}_{\mathcal{X}}(x - y) K_\Lambda(y) dy - \int_{\mathbb{R}^d} \widetilde{h}_{\mathcal{X}}^{*}(x) K_\Lambda(y) dy \\
    & = \int_{\mathbb{R}^d} [\widetilde{h}^{*}_{{\mathcal{X}}}(x - y) - \widetilde{h}^{*}_{\mathcal{X}}(x)] K_\Lambda(y) dy.
\end{align*}
Since $\widetilde{h}^{*}_{\mathcal{X}}$ is a globally $(1+2RM_{\eta})$-Lipschitz continuous function, we further obtain for any $x \in \mathbb{R}^{d}$ that
\begin{align*}
    |\widetilde{h}_{\mathcal{X},\Lambda}^{*}(x) - \widetilde{h}^{*}_{\mathcal{X}}(x)| &\leq \int_{\mathbb{R}^d} |\widetilde{h}^{*}_{\mathcal{X}}(x - y) - \widetilde{h}^{*}_{\mathcal{X}}(x)| \Lambda^{d} K(\Lambda y) dy\\
    &\leq (1+2RM_{\eta}) \int_{\mathbb{R}^d} \|y\| \Lambda^{d} K(\Lambda y) dy \\
    & = \frac{(1 + 2RM_{\eta})M_{K}}{\Lambda},
\end{align*}
where the last inequality follows from Lemma~\ref{lemma:kernel_properties}. As a consequence, we arrive at
\begin{align}
\|\widetilde{h}_{\mathcal{X},\Lambda}^{*}- \widetilde{h}^{*}_{\mathcal{X}}\|_{\infty} \leq \frac{(1 + 2RM_{\eta})M_{K}}{\Lambda}. \label{eq:key_equality_Gaussian_3}
\end{align}
Now, we have
\begin{align*}
\int_{\mathbb{R}^{d}} \widetilde{h}^{*}_{\mathcal{X}}(\theta) &(\widetilde{P}(d\theta) - P'(d\theta)) \\
&= \int_{\mathbb{R}^d} (\widetilde{h}^{*}_{\mathcal{X}}(\theta) - \widetilde{h}_{\mathcal{X},\Lambda}^{*}(\theta)) (\widetilde{P}(d\theta) - P'(d\theta)) + \int_{\mathbb{R}^d}  \widetilde{h}_{\mathcal{X},\Lambda}^{*}(\theta) d(\widetilde{P}(d\theta) - P'(d\theta)).
\end{align*}
A direct application of Hölder's inequality leads to 
$$\int_{\mathbb{R}^d} (\widetilde{h}^{*}_{\mathcal{X}}(\theta) - \widetilde{h}_{\mathcal{X},\Lambda}^{*}(\theta)) (\widetilde{P}(d\theta) - P'(d\theta)) \leq \|\widetilde{h}_{\mathcal{X},\Lambda}^{*}- \widetilde{h}^{*}_{\mathcal{X}}\|_{\infty} \|\widetilde{P} - P'\|_{1} \leq \frac{2(1 + 2RM_{\eta})M_{K}}{\Lambda},$$
where the final inequality is due to $\|\widetilde{P} - P'\|_{1} \leq 2$ and the bound of $\|\widetilde{h}_{\Lambda}^{*}- \widetilde{h}^{*}\|_{\infty}$ in Equation~\eqref{eq:key_equality_Gaussian_3}. Hence, we obtain the following bound
\begin{align*}
\int_{\mathbb{R}^d} (\widetilde{h}^{*}_{\mathcal{X}}(\theta) - \widetilde{h}_{\mathcal{X},\Lambda}^{*}(\theta)) &(\widetilde{P}(d\theta) - P'(d\theta)) \\
&\leq \frac{2(1 + 2RM_{\eta})M_{K}}{\Lambda} + \int_{\mathbb{R}^d}  \widetilde{h}_{\mathcal{X},\Lambda}^{*}(\theta) d(\widetilde{P}(d\theta) - P'(d\theta)).
\end{align*}

To apply the Fourier inversion theorem to $\int_{\mathbb{R}^d}  \widetilde{h}_{\mathcal{X}, \Lambda}^{*}(\theta) (P(d\theta) - P'(d\theta))$, we first need to demonstrate that $\widetilde{h}_{\mathcal{X}, \Lambda}^{*} \in L_{1}(\mathbb{R}^{d})$ and $\mathcal{F}[\widetilde{h}_{\mathcal{X}, \Lambda}^{*}] \in L_{1}(\mathbb{R}^{d})$. Because $\widetilde{h}_{\mathcal{X}, \Lambda}^{*}(x) = (\widetilde{h}_{\mathcal{X}}^{*} * K_\Lambda)(x)$, an application of Young's inequality leads to
$\|\widetilde{h}_{\mathcal{X}, \Lambda}^{*}\|_{1} \leq \|\widetilde{h}_{\mathcal{X}}^{*}\|_{1}\|K_\Lambda\|_{1} = \|\widetilde{h}_{\mathcal{X}}^{*}\|_{1} < \infty$ (thanks to Lemma~\ref{lemma:cutoff_function_properties}). As for $\mathcal{F}[\widetilde{h}_{\mathcal{X}, \Lambda}^{*}]$, by the properties of convolutions we obtain that
$\mathcal{F}[\widetilde{h}_{\mathcal{X}, \Lambda}^{*}](\xi) = \mathcal{F}[\widetilde{h}_{\mathcal{X}}^{*}](\xi) \mathcal{F}[K_\Lambda](\xi)$. As $\mathrm{supp}(\mathcal{F}[K_{\Lambda}]) \subseteq \bar{B}(0, \Lambda)$, we have $\mathrm{supp}(\mathcal{F}[\widetilde{h}_{\mathcal{X}, \Lambda}^{*}]) \subseteq \bar{B}(0, \Lambda)$. Furthermore, $\|\mathcal{F}[K_\Lambda]\|_{\infty} \leq 1$ and $\|\mathcal{F}[\widetilde{h}_{\mathcal{X}}^{*}]\|_{\infty} \leq \frac{\pi^{d/2}}{\Gamma(d/2 + 1)} (2R)^{d + 1}$ (see Lemma~\ref{lemma:cutoff_function_properties}). Therefore, $\|\mathcal{F}[\widetilde{h}_{\mathcal{X}, \Lambda}^{*}]\|_{\infty} \leq \|\mathcal{F}[\widetilde{h}_{\mathcal{X}}^{*}]\|_{\infty} \leq \frac{\pi^{d/2}}{\Gamma(d/2 + 1)} (2R)^{d + 1}$. Putting these results together, we obtain that $\mathcal{F}[\widetilde{h}_{\mathcal{X}, \Lambda}^{*}] \in L_{1}(\mathbb{R}^{d})$. 
Now, from the Fourier inversion theorem, as $\widetilde{h}_{\mathcal{X},\Lambda}^{*} \in L_{1}(\mathbb{R}^{d})$ and $\mathcal{F}[\widetilde{h}_{ \mathcal{X},\Lambda}^{*}] \in L_{1}(\mathbb{R}^{d})$, we can write
$$\widetilde{h}_{\mathcal{X},\Lambda}^{*}(x) = \frac{1}{(2\pi)^d} \int_{\mathbb{R}^d} \mathcal{F}[\widetilde{h}_{\mathcal{X},\Lambda}^{*}](\xi) e^{i \xi^\top x} d\xi,$$
leading to 
$$\int_{\mathbb{R}^d}  \widetilde{h}_{\mathcal{X},\Lambda}^{*}(\theta) (\widetilde{P}(d\theta) - P'(d\theta)) = \frac{1}{(2\pi)^d} \int_{\mathbb{R}^d}  \left(\int_{\mathbb{R}^d} \mathcal{F}[\widetilde{h}_{\mathcal{X},\Lambda}^{*}](\xi) e^{i \xi^\top \theta} d\xi\right) (\widetilde{P}(d\theta) - P'(d\theta)).$$
By Fubini's theorem, we have that
\begin{align*}
    \int_{\mathbb{R}^d}  \widetilde{h}_{\mathcal{X},\Lambda}^{*}(\theta) (\widetilde{P}(d\theta) - P'(d\theta)) & = \frac{1}{(2\pi)^d} \int_{\mathbb{R}^d}  \left(\int_{\mathbb{R}^d} e^{i \xi^\top \theta} (\widetilde{P}(d\theta) - P'(d\theta)) \right) \mathcal{F}[\widetilde{h}_{\mathcal{X},\Lambda}^{*}](\xi) d\xi \\
    & = \frac{1}{(2\pi)^d} \int_{\mathbb{R}^d} \mathcal{F}[\widetilde{h}_{\mathcal{X},\Lambda}^{*}](\xi) \Big( \Phi_{\widetilde{P}}(\xi) - \Phi_{P'}(\xi) \Big) d\xi \\
    & = \frac{1}{(2\pi)^d} \int_{\bar{B}(0, \Lambda)} \mathcal{F}[\widetilde{h}_{\mathcal{X},\Lambda}^{*}](\xi) \Big( \Phi_{\widetilde{P}}(\xi) - \Phi_{P'}(\xi) \Big) d\xi,
\end{align*}
where the final identity is due to the fact that $\mathrm{supp}(\mathcal{F}[\widetilde{h}_{\mathcal{X},\Lambda}^{*}]) \subseteq \bar{B}(0, \Lambda)$. 

Putting all of these results together, we arrive at
\begin{align}
    \int_{\mathbb{R}^d} (\widetilde{h}^{*}_{\mathcal{X}}(\theta)& - \widetilde{h}_{\mathcal{X},\Lambda}^{*}(\theta)) (\widetilde{P}(d\theta) - P'(d\theta)) \nonumber \\
    &\leq \frac{2(1 + 2RM_{\eta})M_{K}}{\Lambda} + \frac{1}{(2\pi)^d} \int_{\bar{B}(0, \Lambda)} \mathcal{F}[\widetilde{h}_{\mathcal{X},\Lambda}^{*}](\xi) \Big( \Phi_{\widetilde{P}}(\xi) - \Phi_{P'}(\xi) \Big) d\xi. \label{eq:key_equality_Gaussian_4}
\end{align}
Next, since $p_G = P * f(\cdot\mid0,\sigma^2 I_{d})$, it is clear that $p_G = \widetilde{P} * f(\cdot\mid0,\sigma'^2 I_{d})$.
Therefore, for all $\xi \in \mathbb{R}^{d}$ we obtain that
\begin{align*}
    \Phi_{\widetilde{P}}(\xi) - \Phi_{P'}(\xi) &= \frac{\Phi_{p_{G}}(\xi)}{\Phi_{\sigma'^2 I_{d}}(\xi)} - \frac{\Phi_{p_{G'}}(\xi)}{\Phi_{\sigma'^2 I_{d}}(\xi)} \\
    &= \frac{\Phi_{p_{G}}(\xi) - \Phi_{p_{G'}}(\xi)}{\Phi_{\sigma'^2 I_{d}}(\xi)}.
\end{align*}
Since $\|\mathcal{F}[\widetilde{h}_{\mathcal{X},\Lambda}^*]\|_{\infty} \leq \frac{\pi^{d/2}}{\Gamma(d/2 + 1)} (2R)^{d + 1}=:C(R,d)$, we obtain that
\begin{align}
    &\left|\int_{\bar{B}(0, \Lambda)} \mathcal{F}[\widetilde{h}_{\mathcal{X},\Lambda}^{*}](\xi) \Big( \Phi_P(\xi) - \Phi_{P'}(\xi) \Big) d\xi\right|  \leq \frac{C(R,d)}{(2\pi)^{d}} \int_{\bar{B}(0, \Lambda)} \frac{|\Phi_{p_{G}}(\xi) - \Phi_{p_{G'}}(\xi)|}{|\Phi_{\sigma'^2 I_{d}}(\xi)|} d\xi. \label{eq:key_equality_Gaussian_5}
\end{align}
By construction, $|\Phi_{p_{G}}(\xi) - \Phi_{p_{G'}}(\xi)| \leq \|p_{G} - p_{G'}\|_{1}$ for any $\xi \in \mathbb{R}^{d}$. Furthermore,  $\Phi_{\sigma'^2 I_{d}}(\xi) = \exp \left(- \frac{\sigma'^2}{2} \|\xi\|^2\right)$. Therefore, we obtain that
\begin{align}
\frac{C(R,d)}{(2\pi)^{d}} \int_{\bar{B}(0, \Lambda)} \frac{|\Phi_{p_{G}}(\xi) - \Phi_{p_{G'}}(\xi)|}{|\Phi_{\sigma'^2 I_{d}}(\xi)|} d\xi  & \leq  \frac{C(R,d)}{(2\pi)^{d}} \|p_{G} - p_{G'}\|_{1} \int_{\bar{B}(0, \Lambda)} \exp \left( \frac{\sigma'^2}{2} \|\xi\|^2\right) d\xi \nonumber \\
& \leq C_{1} \|p_{G} - p_{G'}\|_{1} \Lambda^{d} \exp \left(\frac{\sigma'^2}{2} \Lambda^2\right), \label{eq:key_equality_Gaussian_6}
\end{align}
where $C_{1} =\frac{C(R,d)}{(4\pi)^{d/2}   \Gamma(d/2+1)}$ is a constant depending on $R$ and $d$. 

Collecting the results from equations~\eqref{eq:key_equality_Gaussian_0},~\eqref{eq:key_equality_Gaussian_2},~\eqref{eq:key_equality_Gaussian_4},~\eqref{eq:key_equality_Gaussian_5}, and~\eqref{eq:key_equality_Gaussian_6}, we arrive at
\begin{align}
\nonumber
    W_{1}(P,P') &\leq (2 + 2RM_{\eta}) \sqrt{d} \sqrt{\Delta} + \frac{2(1 + 2RM_{\eta}) M_{K}}{\Lambda}\\
 &\hspace{1cm}+ C_{1} \|p_{G} - p_{G'}\|_{1} \Lambda^{d} \exp \left( \frac{\sigma'^2}{2} \Lambda^2\right).  \label{eq:key_equality_Gaussian_7}
\end{align}
Let $V := \|p_{G} - p_{G'}\|_{1}$, $\bar{C}_2 = (2 + 2RM_{\eta}) \sqrt{d}$, $C_3 = 2(1 + 2RM_{\eta}) M_{K}$. Here we consider two situations. 

\paragraph*{Case 1: small $V$.} Recall the following inequality for $\Delta = \sigma^2 - \sigma'^2$ from Theorem~\ref{thm:inequality_gaussian}:
$$\|p_G  - p_{G'}\|_1 \;\geq\; \exp\left(-M \frac{\log(1/\Delta)}{\Delta} \right),$$
which implies 
\begin{equation*}
    \log(1/V) \leq M\frac{\log(1/\Delta)}{\Delta}. 
\end{equation*}
For $V\leq \exp(-e^{1/M})$, using Lemma \ref{lemma:dung_a_small_estimation_from_xlogx}, we have 
\begin{equation*}
    \frac{1}{\Delta} \geq \frac{1}{M}\frac{\log(1/V)}{\log\log(1/V)}. 
\end{equation*}
Replacing this inequality into 
the inequality~\eqref{eq:key_equality_Gaussian_7}, we obtain that
$$W_{1}(P,P') \leq C_{2} \sqrt{\frac{\log \log(1/V)}{\log(1/V)}} + \frac{C_{3}}{\Lambda} + C_{4} V \Lambda^{d} \exp \left(\frac{\sigma'^2}{2} \Lambda^2\right),$$
where $C_2 = \sqrt{M}\cdot \bar{C}_2$. Let $N = \log(1/V)$. When $V \leq e^{-(d+1)^2}$, then we have $N > (d+1)\log N$, we can choose 
$$\Lambda = \frac{\sqrt{2}}{\sigma'} \sqrt{N - \frac{d+1}{2} \log N}.$$
This choice of $\Lambda$ gives us 
\begin{equation*}
    \frac{1}{\sigma'}\sqrt{N}\leq \Lambda \leq \frac{\sqrt{2}}{\sigma'}\sqrt{N}.
\end{equation*}
Under this scenario, $$ \frac{C_3}{\Lambda} \leq \frac{C_3 \sigma'}{\sqrt{N}} = \frac{C_3 \sigma'}{\sqrt{\log(1/V)}}. $$
Furthermore, we have
$$ \exp \left( \frac{\sigma'^2}{2} \Lambda^2 \right) = \exp\left(N - \frac{d+1}{2} \log N\right) = \frac{e^N}{N^{(d+1)/2}}.$$
Therefore, 
$$C_{4} V \Lambda^{d} \exp \left(\frac{\sigma'^2}{2} \Lambda^2\right) = C_4 e^{-N} \Big( N^{d/2} \Big) \left(\frac{e^N}{N^{(d+1)/2}}\right) = C_4N^{-1/2} = \frac{C_4}{\sqrt{\log(1/V)}}.$$
As a result, we have
$$ W_1(P,P') \leq C_2 \sqrt{\frac{\log \log(1/V)}{\log(1/V)}} + \frac{C_6}{\sqrt{\log(1/V)}} \leq C_7 \sqrt{\frac{\log \log(1/V)}{\log(1/V)}},$$
where $C_6 = C_4 + C_3\sigma'$, $C_7 = C_2 + C_6$, or 
$$W_1^2(P,P') \leq C_7^2\frac{\log \log(1/V)}{\log(1/V)}.$$ 

If $V\leq e^{-e^{8C^2_7}}$ (so that $\log\log(1/V)\geq 8C^2_7$) then, using $e^t\geq t^2/2$, we have that $C^2_7\log\log(1/V)\leq(\log(1/V))^{1/2}$, and thus  $W^2_1(P,P') \leq(\log(1/V))^{-1/2}$, which implies $2\log(1/W^2_1(P,P'))\geq\log\log(1/V)$. Thus, for
$$V\leq C_5 := \min\{\exp(-e^{1/M}), e^{-(d+1)^2}, e^{-e^{8C_7^2}}\}$$
we achieve
\begin{equation}\label{eq:small_V_gaussian}
    V \geq \exp\left(-2C_7^2\frac{\log(1/W^2_1(P,P'))}{W^2_1(P,P')}\right) =\exp\!\left(-4C_7^2\frac{\log(1/W_1(P,P'))}{W^2_1(P,P')}\right) 
\end{equation}

\paragraph*{Case 2: $V \geq C_5$.} then noting that the function $t\mapsto t\log t$ attains its minimum $-e^{-1}$ at $t=e^{-1}$, we have that
\begin{equation}
    V\geq C_5 \geq C_8\exp\!\left(-4C_7^2\frac{\log(1/W_1(P,P'))}{W^2_1(P,P')}\right), \label{eq:large_V_gaussian}
\end{equation}
where $C_8:=C_5 e^{-2C_7^2/e}$.
Combining Equations~\eqref{eq:small_V_gaussian} and \eqref{eq:large_V_gaussian} yields, with $M:=4C_7^2$ and $C':=\min\{1,C_8\}$,
\begin{equation}
    \|p_G-p_{G'}\|_1\geq C'\exp\!\left(-M\frac{\log(1/W_1(P,P'))}{W^2_1(P,P')}\right). \label{eq:univariate_conclusion_gaussian}
\end{equation}
uniformly for all $P$ and $P'$. This concludes the proof.

\subsection*{Proof of Theorem~\ref{thm:inequality_cauchy}}
\label{proof_thm_inequality_cauchy}
We split the proof in four parts, as follows.
\paragraph*{Verification of Assumption~\ref{ass2}.}

The characteristic function of the Cauchy kernel is $\Phi_\Sigma(\xi) = \exp(-\sqrt{\xi^\top\Sigma\xi})$. Setting $\psi(t) := e^{-t}$, so that $\psi'(t) = -\psi(t)$, the mean value theorem gives a $\tau\in(0,1)$ such that
$$\Phi_{\Sigma'}(\xi) - \Phi_\Sigma(\xi) = \psi'\!\left((\xi^\top\Sigma(\tau)\xi)^{1/2}\right)\left((\xi^\top\Sigma'\xi)^{1/2} - (\xi^\top\Sigma\xi)^{1/2}\right),$$
where $\Sigma(\tau) := (1-\tau)\Sigma'+\tau\Sigma$. Using the inequality $|\sqrt{a}-\sqrt{b}|\leq\sqrt{|a-b|}$, we get
\begin{align*}
    |\Phi_{\Sigma'}(\xi) - \Phi_\Sigma(\xi)| &\leq \psi\!\left((\xi^\top\Sigma(\tau)\xi)^{1/2}\right)|\xi^\top(\Sigma'-\Sigma)\xi|^{1/2}\\
    &\leq \max\big\{|\Phi_\Sigma(\xi)|,|\Phi_{\Sigma'}(\xi)|\big\}\|\Sigma'-\Sigma\|_{\mathrm{op}}^{1/2}\|\xi\|,
\end{align*}
so Assumption~\ref{ass2} holds with $C=1$, $p=1$, and $\Xi(t)=\sqrt{t}$.

\paragraph*{Verification of Assumption~\ref{ass1}: univariate case.}

Consider two univariate mixture densities
$$p_G(x) = \int f(x\mid\theta,\sigma)\,P(d\theta) \qquad\text{and}\qquad p_{G'}(x) = \int f(x\mid\theta,\sigma')\,P'(d\theta),$$
where $f(\cdot\mid\theta,\sigma)$ is the univariate Cauchy kernel with location $\theta$ and scale $\sigma$. Without loss of generality, suppose $\sigma'>\sigma$. Setting $V:=\|p_G-p_{G'}\|_1$, since $\mathcal{F}[p_G](\xi) = \Phi_P(\xi)\exp(-\sigma|\xi|)$,
\begin{align*}
    V &\geq \mathrm{Re}\big(\mathcal{F}[p_G](\xi)\big) - \mathrm{Re}\big(\mathcal{F}[p_{G'}](\xi)\big)\\
    & = \exp(-\sigma|\xi|)\int\cos(\xi\theta)\,P(d\theta) - \exp(-\sigma'|\xi|)\int\cos(\xi\theta)\,P'(d\theta).
\end{align*}
Using $1-\xi^2\theta^2/2\leq\cos(\xi\theta)\leq 1$ and $|\theta|\leq R$ for $\theta\in\mathrm{supp}(P)\cup\mathrm{supp}(P')$,
\begin{equation}
    1 - \tfrac{1}{2}R^2\xi^2 \leq \int\cos(\xi\theta)\,P(d\theta) \leq 1. \label{eq:cauchy_cos_bound}
\end{equation}
Hence, for all $\xi\in\mathbb{R}$,
\begin{align}
    V &\geq \exp(-\sigma|\xi|)\left(1-\tfrac{1}{2}R^2\xi^2\right) - \exp(-\sigma'|\xi|) \nonumber\\
    &= \exp(-\sigma|\xi|)\big(1-\exp(-(\sigma'-\sigma)|\xi|)\big) - \tfrac{1}{2}R^2\xi^2\exp(-\sigma|\xi|). \label{eq:cauchy_V_lower}
\end{align}
Applying $1-e^{-x}\geq x - x^2/2$ for $x\geq 0$ and $e^{-\sigma|\xi|}\geq 1-\sigma|\xi|$ to the first term, and $e^{-\sigma|\xi|}\leq 1$ to the second, we obtain for $|\xi|\leq 2/(\sigma'-\sigma)$,
\begin{equation*}
    V \geq |\xi|(\sigma'-\sigma) - \xi^2\left(\sigma(\sigma'-\sigma) + \tfrac{1}{2}(\sigma'-\sigma)^2 + R^2\right).
\end{equation*}
The right-hand side is maximized at
$$|\xi| = \xi_0 := \frac{\sigma'-\sigma}{2\sigma(\sigma'-\sigma)+(\sigma'-\sigma)^2+2R^2},$$
which satisfies $\xi_0\leq 2/(\sigma'-\sigma)$. Substituting and simplifying,
$$V \geq \frac{(\sigma'-\sigma)^2}{4\sigma(\sigma'-\sigma)+2(\sigma'-\sigma)^2+4R^2} \geq \frac{(\sigma'-\sigma)^2}{6\lambda_{\max}^2+4R^2} =: C_0(\sigma'-\sigma)^2.$$
This confirms Assumption~\ref{ass1} in the univariate case with $\Psi(t) = t^2$.

\paragraph*{Verification of Assumption~\ref{ass1}: multivariate case.}

Let $u\in\mathbb{R}^d$ be the principal  eigenvector (thus, of unit norm) of $\Sigma-\Sigma'$, so that $u^\top(\Sigma-\Sigma')u = \|\Sigma-\Sigma'\|_{\mathrm{op}}$. Define the projected mixture densities
\begin{align*}
    p_{G,u}(y) &:= \int f\big(y\mid u^\top\theta,\,(u^\top\Sigma u)^{1/2}\big)\,P(d\theta),\\
    p_{G',u}(y) &:= \int f\big(y\mid u^\top\theta,\,(u^\top\Sigma' u)^{1/2}\big)\,P'(d\theta),
\end{align*}
which correspond to the marginal densities of $u^\top X$ under $p_G$ and $p_{G'}$ respectively. By Lemma~\ref{lemma:dung_total_variance_distance_estimation},
$$\|p_G-p_{G'}\|_1 \geq \|p_{G,u}-p_{G',u}\|_1.$$
Setting $\sigma := (u^\top\Sigma u)^{1/2}$ and $\sigma' := (u^\top\Sigma' u)^{1/2}$, the univariate bound gives $\|p_{G,u}-p_{G',u}\|_1 \geq C_0(\sigma'-\sigma)^2$. Then,
$$|\sigma'-\sigma| =  \frac{|\sigma'-\sigma|(\sigma'+\sigma)}{\sigma'+\sigma} = \frac{|u^\top(\Sigma'-\Sigma)u|}{\sigma'+\sigma} \geq \frac{\|\Sigma'-\Sigma\|_{\mathrm{op}}}{2\lambda_{\max}^{1/2}},$$
from which it follows that
$$\|p_G-p_{G'}\|_1 \geq \frac{C_0}{4\lambda_{\max}}\|\Sigma-\Sigma'\|_{\mathrm{op}}^2,$$
thus establishing Assumption~\ref{ass1} with $\Psi(t) = t^2$.

\paragraph*{Conclusion.}
With Assumptions 1 and 2 verified for $\Psi(t)=t^2$, $\Xi(t)=\sqrt{t}$ and $p=1$, and since the super-smoothness condition of order $\alpha=1$ is automatically verified given the form of the Cauchy characteristic function, an application of Theorem~\ref{theorem:supersmooth_rate} completes the proof.

\subsection*{Proof of Theorem~\ref{thm:inequality_laplace}}
\label{proof_thm_inequality_laplace}
We split the proof into five parts, as follows.

\paragraph*{PDE inversion condition.}

The characteristic function of the multivariate Laplace kernel is $\xi \in \mathbb{R}^d \mapsto \Phi_\Sigma(\xi) = (1+\frac{1}{2}\xi^\top\Sigma\xi)^{-1}$, so $\Phi_\Sigma(\xi)^{-1} = 1+\frac{1}{2}\xi^\top\Sigma\xi$. By the linearity of the Fourier transform and the identity $\mathcal{F}[\partial^\nu p_G](\xi) = (-i)^{|\nu|}\xi^\nu\mathcal{F}[p_G](\xi)$,
\begin{align*}
    \mathcal{F}[\mathcal{T}_\Sigma p_G](\xi)
    &= \mathcal{F}[p_G](\xi) - \frac{1}{2}\sum_{j=1}^d\sum_{k=1}^d\Sigma_{jk}(-\xi_j\xi_k)\mathcal{F}[p_G](\xi)
    = \left(1+\frac{1}{2}\xi^\top\Sigma\xi\right)\mathcal{F}[p_G](\xi)\\
    &= \Phi_\Sigma(\xi)^{-1}\mathcal{F}[p_G](\xi) = \mathcal{F}[P](\xi),
\end{align*}
where the last equality uses $\mathcal{F}[p_G](\xi) = \Phi_P(\xi)\Phi_\Sigma(\xi)$. By uniqueness of the Fourier transform, $\mathcal{T}_\Sigma p_G = P$ in the sense of distributions, and similarly $\mathcal{T}_{\Sigma'}p_{G'} = P'$. The Lipschitz continuity condition on the coefficients is immediate, since the only non-constant coefficients are $c_\nu(\Sigma) = -\Sigma_{jk}/2$ for $\nu\in\mathbb N^d$ with $|\nu|=2$, which depend linearly on $\Sigma$.

\paragraph*{Verification of Assumption~\ref{ass2}.}

Setting $\psi(t) := (1+t/2)^{-1}$, so that $|\psi'(t)| = (1/2)|\psi(t)|^2 \leq (1/2)|\psi(t)|$, the mean value theorem implies that 
\begin{align*}
    |\Phi_{\Sigma'}(\xi) - \Phi_\Sigma(\xi)|
    &= |\psi'(\xi^\top\Sigma(\tau)\xi)|\cdot|\xi^\top(\Sigma'-\Sigma)\xi|
    \leq \tfrac{1}{2}\,\psi(\xi^\top\Sigma(\tau)\xi)\,\|\Sigma'-\Sigma\|_{\mathrm{op}}\|\xi\|^2\\
    &\leq \tfrac{1}{2}\max\big\{|\Phi_\Sigma(\xi)|,|\Phi_{\Sigma'}(\xi)|\big\}\|\Sigma'-\Sigma\|_{\mathrm{op}}\|\xi\|^2,
\end{align*}
for some  $\tau\in(0,1)$, where we set $\Sigma(\tau):=(1-\tau)\Sigma'+\tau\Sigma$. Hence Assumption~\ref{ass2} holds with $p=2$ and $\Xi(t)=t$.

\paragraph*{Verification of Assumption~\ref{ass1}: univariate case.}

Consider two univariate mixtures
$$p_G(x) = \int f(x\mid\theta,\sigma)\,P(d\theta) \qquad\text{and}\qquad p_{G'}(x) = \int f(x\mid\theta,\sigma')\,P'(d\theta),$$
where $f(\cdot\mid\theta,\sigma)$ is the univariate Laplace kernel with location $\theta$ and scale $\sigma$. The PDE inversion condition gives $\mathcal{T}_\sigma p_G = P$ and $\mathcal{T}_{\sigma'} p_{G'} = P'$, where $\mathcal{T}_\sigma := \mathrm{Id} - \frac{\sigma}{2}\frac{d^2}{dx^2}$.

Let $h\in C_c^\infty(\mathbb{R})$ be a non-negative bump function supported on $(R+1,R+2)$ with $\|h\|_1 = 1$; the key property here is that $\mathrm{supp}(h)$ is disjoint from $[-R,R]\supseteq\mathrm{supp}(P)\cup\mathrm{supp}(P')$. Therefore $\langle P, h\rangle = \langle P', h\rangle = 0$, and the PDE inversion gives
$$\int_{\mathbb{R}} p_G(x)\,\big(\mathcal{T}_\sigma h(x)\big)\,dx = \langle \mathcal{T}_\sigma p_G, h\rangle = \langle P, h\rangle = 0.$$
On the other hand, decomposing $\mathcal{T}_\sigma = \mathcal{T}_{\sigma'} + (\mathcal{T}_\sigma - \mathcal{T}_{\sigma'})$ and using $\langle p_{G'}, \mathcal{T}_{\sigma'}h\rangle = \langle P', h\rangle = 0$,
$$\int_{\mathbb{R}} p_{G'}(x)\,\big(\mathcal{T}_\sigma h(x)\big)\,dx = \langle p_{G'},(\mathcal{T}_{\sigma}-\mathcal{T}_{\sigma'})h\rangle = -\tfrac{1}{2}(\sigma-\sigma')\int_{\mathbb{R}} p_{G'}(x)\,h''(x)\,dx,$$
since $\mathcal{T}_\sigma - \mathcal{T}_{\sigma'} = -\frac{\sigma-\sigma'}{2}\frac{d^2}{dx^2}$. Subtracting and applying Hölder's inequality,
\begin{equation}
    V\,\|\mathcal{T}_\sigma h\|_\infty \geq \tfrac{1}{2}|\sigma-\sigma'|\left|\int_{\mathbb{R}} h(x)\,p_{G'}''(x)\,dx\right|. \label{eq:laplace_holder}
\end{equation}
For $x\in(R+1,R+2)$ and $\theta'\in[-R,R]$, we have $x-\theta'\geq 1 > 0$, so the Laplace kernel simplifies to $f(x\mid\theta',\sigma') = (2\sigma')^{-1/2}\exp(-(x-\theta')/\sqrt{\sigma'/2})$, giving
$$f''(x\mid\theta',\sigma') = \frac{2}{\sigma'}\,f(x\mid\theta',\sigma').$$
Hence $p_{G'}''(x) = \frac{2}{\sigma'}p_{G'}(x)$ for $x\in(R+1,R+2)$, and since $\mathrm{supp}(h)\subseteq(R+1,R+2)$,
$$\left|\int_{\mathbb{R}} h(x)\,p_{G'}''(x)\,dx\right| = \frac{2}{\sigma'}\int_{\mathbb{R}} h(x)\,p_{G'}(x)\,dx.$$
For $x\in[R+1,R+2]$ and $\theta'\in[-R,R]$, the Laplace kernel is bounded below by
$$f(x\mid\theta',\sigma') \geq \frac{1}{\sqrt{2\lambda_{\max}}}\exp\!\left(-\frac{2R+2}{\sqrt{\lambda_{\min}/2}}\right) =: c_{\min} > 0,$$
so $\int_{\mathbb{R}} h(x)\,p_{G'}(x)\,dx \geq c_{\min}\|h\|_1 = c_{\min}$. For the upper bound,
$$\|\mathcal{T}_\sigma h\|_\infty \leq \|h\|_\infty + \tfrac{\lambda_{\max}}{2}\|h''\|_\infty =: C_h < \infty.$$
Substituting into Equation~\eqref{eq:laplace_holder},
$$V\geq \frac{c_{\min}}{C_h\lambda_{\max}}|\sigma-\sigma'|,$$
which confirms Assumption~\ref{ass1} in the univariate case with $\Psi(t) = t$.

\paragraph*{Verification of Assumption~\ref{ass1}: multivariate case.}

Let $u\in\mathbb{R}^d$ with $\|u\|=1$ be the principal unit eigenvector of $\Sigma-\Sigma'$. The characteristic function of $u^\top X$ for $X\sim p_G$ at $t\in\mathbb{R}$ is
$$\mathbb{E}\big[e^{it\,u^\top X}\big] = \Phi_{p_G}(tu) = \Phi_P(tu)\,\Phi_\Sigma(tu) = \Phi_{P_u}(t)\cdot\frac{1}{1+\frac{1}{2}t^2\,u^\top\Sigma u},$$
where $P_u$ is the pushforward of $P$ under $\theta\mapsto u^\top\theta$. By uniqueness of characteristic functions, this identifies the projection $u^\top X$ as a univariate Laplace mixture with scale $\sigma := u^\top\Sigma u$ and location mixing measure $P_u$, and similarly for $u^\top X'$ with scale $\sigma' := u^\top\Sigma' u$. By Lemma~\ref{lemma:dung_total_variance_distance_estimation},
$$\|p_G-p_{G'}\|_1 \geq \|p_{G,u}-p_{G',u}\|_1 \geq \frac{c_{\min}}{C_h\lambda_{\max}}|\sigma-\sigma'| = \frac{c_{\min}}{C_h\lambda_{\max}}\|\Sigma-\Sigma'\|_{\mathrm{op}},$$
where the last equality uses $|\sigma-\sigma'| = |u^\top(\Sigma-\Sigma')u| = \|\Sigma-\Sigma'\|_{\mathrm{op}}$ by the choice of $u$. This establishes Assumption~\ref{ass1} with $\Psi(t) = t$.

\paragraph*{Conclusion.}
With the PDE inversion condition verified, Assumptions 1 and 2 established for $\Psi(t)=\Xi(t)=t$, $p=2$, and ordinary smoothness of order $\beta=2$ being obvious given the form of the Laplace characteristic function, an application of Theorem\ref{theorem:sharpen_ordinary_rate} completes the proof.

\subsection*{Proof of Proposition~\ref{pro:rate_multivariate_normal}}\label{proof_pro5.1}

The validity of both parts of the statement is a consequence of the inequalities established in our Theorems~\ref{thm:inequality_gaussian} and \ref{thm:inequality_gaussian_isotropic}, applied in conjunction with Theorem~9.9 of \citep{ghosal2017fundamentals} (specifically, their results for the super-smooth setting, which yield a $n^{-1/2}(\log n)^{(d+1)/2}$ Hellinger\footnote{The Hellinger distance $h$ between densities $f$ and $f'$ is defined as $h^2(f,f'):=\int(\sqrt{f}-\sqrt{p_{f}})^2dx$. Because $\|f-f'\|_1\lesssim h(f,f')$, any Hellinger contraction rate is also an $L^1$ contraction rate. In particular, a general reverse inequality also exists, so the two metrics are topologically equivalent.} contraction rate). In particular, we rely on the fact if the base measure $H$ is supported on $[-R,R]^d$, then the posterior is supported on mixing measures $G=P\times \delta_\Sigma$ with $P$ having support included in $[-R,R]^d$ \citep{majumdar1992topological}. Moreover, assumptions (9.5), (9.6), and (9.7) adopted in Theorem 9.9 of \cite{ghosal2017fundamentals} are trivially satisfied by our model due to the assumed compact support conditions. Furthermore, their assumption (9.8) is irrelevant in our context, as it is not required in their proof for the super-smooth case.

\subsection*{Proof of Proposition~\ref{pro:rate_univariate_laplace}}

The validity of the statement follows directly from Theorem 4.4 in \cite{scricciolo2011posterior}, which under our assumptions establishes a $n^{-1/4}(\log n)^{5/4}$ Hellinger contraction rate, together with the inequality established in our Theorem~\ref{thm:inequality_laplace}. In particular, we remark that the lower-bound
\begin{equation*}
    \frac{dH}{dx}(\theta)\gtrsim e^{-b|\theta|^\delta} \quad \text{for some } c,\delta>0,
\end{equation*}
as required in the statement of Theorem 4.4 of \cite{scricciolo2011posterior},\footnote{The bound itself is stated in Equation~(4.12) of that same paper.} is irrelevant in our case where $P_0$ is compactly supported, as it is only employed in the portion of the proof of the result dealing with the more general case where only a weaker exponential-power upper-bound on the tails of $P_0$ is assumed.

\section{Auxiliary lemmas with proofs}\label{appB}

\begin{lemma}
\label{lemma:global_extension}
Define the extended function $\bar{h}_{\mathcal{X}}^{*} : \mathbb{R}^d \to \mathbb{R}$ via the infimal convolution operator over the domain $\mathcal{X}$:
$$\bar{h}_{\mathcal{X}}^{*}(x) := \inf_{y \in \mathcal{X}} \Big\{ h_{\mathcal{X}}^{*}(y) + \|x - y\| \Big\}, \quad x \in \mathbb{R}^d.$$
The following properties hold:

\begin{enumerate}
    \item[(a)] $\bar{h}_{\mathcal{X}}^{*}(x) = h_{\mathcal{X}}^{*}(x)$ for all $x \in \mathcal{X}$;
    \item[(b)] $\bar{h}_{\mathcal{X}}^{*}$ is a globally 1-Lipschitz continuous function on $\mathbb{R}^d$, meaning $\bar{h}_{\mathcal{X}}^{*} \in \mathrm{Lip}_1(\mathbb{R}^d)$.
\end{enumerate}

\end{lemma}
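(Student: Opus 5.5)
The plan is the standard McShane--Whitney argument. It uses only that $h_{\mathcal{X}}^{*}\in\mathrm{Lip}_1(\mathcal{X})$ and that $\mathcal{X}$ is nonempty. The first task is to check that $\bar{h}_{\mathcal{X}}^{*}$ is finite everywhere. Fix any $y_0\in\mathcal{X}$. For every $y\in\mathcal{X}$, the Lipschitz property and the triangle inequality give
\[
h_{\mathcal{X}}^{*}(y)+\|x-y\| \;\geq\; h_{\mathcal{X}}^{*}(y_0)-\|y-y_0\|+\|x-y\| \;\geq\; h_{\mathcal{X}}^{*}(y_0)-\|x-y_0\|.
\]
Hence the infimum is bounded below by a finite number. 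Taking $y=y_0$ shows it is bounded above as well, so $\bar{h}_{\mathcal{X}}^{*}(x)\in\mathbb{R}$.

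For (a), fix $x\in\mathcal{X}$. Choosing $y=x$ in the infimum gives $\bar{h}_{\mathcal{X}}^{*}(x)\leq h_{\mathcal{X}}^{*}(x)$. Conversely, for any $y\in\mathcal{X}$ the $1$-Lipschitz property gives $h_{\mathcal{X}}^{*}(y)\geq h_{\mathcal{X}}^{*}(x)-\|x-y\|$, that is, $h_{\mathcal{X}}^{*}(y)+\|x-y\|\geq h_{\mathcal{X}}^{*}(x)$. Taking the infimum over $y$ yields $\bar{h}_{\mathcal{X}}^{*}(x)\geq h_{\mathcal{X}}^{*}(x)$, so equality holds.

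For (b), fix $x,x'\in\mathbb{R}^d$. For each $y\in\mathcal{X}$,
\[
h_{\mathcal{X}}^{*}(y)+\|x-y\|\leq h_{\mathcal{X}}^{*}(y)+\|x'-y\|+\|x-x'\|.
\]
Taking the infimum over $y$ on both sides gives $\bar{h}_{\mathcal{X}}^{*}(x)\leq \bar{h}_{\mathcal{X}}^{*}(x')+\|x-x'\|$. Exchanging the roles of $x$ and $x'$ gives the reverse inequality, hence $|\bar{h}_{\mathcal{X}}^{*}(x)-\bar{h}_{\mathcal{X}}^{*}(x')|\leq\|x-x'\|$.

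There is no real obstacle. The only point that needs care is the finiteness check at the start, which is needed before any subtraction of infima is meaningful; everything else is the triangle inequality. As a by-product, since $h_{\mathcal{X}}^{*}(0)=0$ and $0\in\mathcal{X}$ (as $\mathcal{X}=\bar B(0,R)$ in the proof where this lemma is used), part (a) gives $\bar{h}_{\mathcal{X}}^{*}(0)=0$. Combined with (b), this yields the growth bound $|\bar{h}_{\mathcal{X}}^{*}(x)|\leq\|x\|$ used in the proof of Theorem~\ref{theorem:supersmooth_rate}.
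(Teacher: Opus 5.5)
Your proof is correct and follows the paper's McShane--Whitney argument essentially step for step: choosing $y=x$ together with the $1$-Lipschitz bound gives (a), and the triangle inequality followed by an infimum over $y$ and a swap of roles gives (b). The only minor difference is the finiteness check. The paper bounds $|h_{\mathcal{X}}^{*}(y)|\le\|y\|\le R$, which relies on $h_{\mathcal{X}}^{*}(0)=0$ and the boundedness of $\mathcal{X}$, whereas your anchor-point lower bound $h_{\mathcal{X}}^{*}(y_0)-\|x-y_0\|$ works for any nonempty $\mathcal{X}$.
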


\begin{proof}
Since $h_{\mathcal{X}}^{*}\in\mathrm{Lip}_1(\mathcal{X})$ and $h_{\mathcal{X}}^{*}(0)=0$, we have $|h_{\mathcal{X}}^{*}(y)|\leq\|y\|\leq R$ for all $y\in\mathcal{X}$, so $h_{\mathcal{X}}^{*}(y)+\|x-y\|\geq -R$ uniformly in $y$. Hence the infimum defining $\bar{h}_{\mathcal{X}}^{*}(x)$ is finite for every $x\in\mathbb{R}^d$.

\medskip\noindent\textit{Part (a).}
For any $x\in\mathcal{X}$, taking $y=x$ in the infimum gives $\bar{h}_{\mathcal{X}}^{*}(x)\leq h_{\mathcal{X}}^{*}(x)$. Conversely, the $1$-Lipschitz property of $h_{\mathcal{X}}^{*}$ on $\mathcal{X}$ gives $h_{\mathcal{X}}^{*}(x)-h_{\mathcal{X}}^{*}(y)\leq\|x-y\|$ for all $y\in\mathcal{X}$, so $h_{\mathcal{X}}^{*}(x)\leq h_{\mathcal{X}}^{*}(y)+\|x-y\|$ for all $y\in\mathcal{X}$, and taking the infimum over $y$ yields $h_{\mathcal{X}}^{*}(x)\leq\bar{h}_{\mathcal{X}}^{*}(x)$. Combining both inequalities, $\bar{h}_{\mathcal{X}}^{*}(x)=h_{\mathcal{X}}^{*}(x)$ for all $x\in\mathcal{X}$.

\medskip\noindent\textit{Part (b).}
For any $u,v\in\mathbb{R}^d$ and any $y\in\mathcal{X}$, the triangle inequality gives $\|u-y\|\leq\|u-v\|+\|v-y\|$. Adding $h_{\mathcal{X}}^{*}(y)$ to both sides and taking the infimum over $y\in\mathcal{X}$,
$$\bar{h}_{\mathcal{X}}^{*}(u) \leq \|u-v\| + \bar{h}_{\mathcal{X}}^{*}(v).$$
Swapping the roles of $u$ and $v$ and combining yields $|\bar{h}_{\mathcal{X}}^{*}(u)-\bar{h}_{\mathcal{X}}^{*}(v)|\leq\|u-v\|$ for all $u,v\in\mathbb{R}^d$, so $\bar{h}_{\mathcal{X}}^{*}\in\mathrm{Lip}_1(\mathbb{R}^d)$.
\end{proof}

\begin{lemma}
\label{lemma:cutoff_function}
Define the isotropic bump function $\varrho: \mathbb{R}^d \to \mathbb{R}$ as
$$\varrho(x) := \begin{cases} \exp\left(-\frac{1}{1 - \|x\|^2}\right) & \text{if } \|x\| < 1 \\ 0 & \text{if } \|x\| \ge 1. \end{cases}$$
Then define the normalized mollifier $\psi(x) = \frac{1}{C_\varrho} \varrho(x)$, where $C_\varrho := \int_{\mathbb{R}^d} \varrho(x) dx \in (0, \infty)$, and its $\epsilon = \frac{R}{2}$-scaled mollifier as
$$ \psi_\epsilon(x) = \frac{1}{\epsilon^d} \psi\left(\frac{x}{\epsilon}\right).$$
Then define the cutoff function $\eta: \mathbb{R}^d \to \mathbb{R}$ as
$$\eta(x) := (\mathbf{1}_A * \psi_\epsilon)(x) = \int_{\mathbb{R}^d} \mathbf{1}_A(x - y) \psi_\epsilon(y)\, dy,$$
where $A = \overline{B}(0, R + \epsilon) = \overline{B}\left(0, \frac{3R}{2}\right)$. Then the following properties hold:
\begin{enumerate}
    \item[(a)] $\eta \in C_c^\infty(\mathbb{R}^d)$;
    \item[(b)] $\eta(x) = 1$ for all $x \in \overline{B}(0, R)$, $\eta(x) = 0$ for all $x \notin B(0, 2R)$, and $0 \le \eta(x) \le 1$ for all $x \in \mathbb{R}^d$;
    \item[(c)] $M_\eta:= \|\nabla \eta\|_\infty < \infty$.
\end{enumerate}
\end{lemma}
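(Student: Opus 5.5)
The plan is to verify (a)--(c) in turn, using the standard fact that convolving a bounded compactly supported function with a smooth compactly supported mollifier produces a smooth compactly supported function. First I would check that $\varrho\in C_c^\infty(\mathbb R^d)$. Inside the open unit ball $\varrho$ is a composition of smooth maps. Near the sphere $\|x\|=1$, every derivative of $\exp(-1/(1-\|x\|^2))$ has the form $Q(x)(1-\|x\|^2)^{-k}\exp(-1/(1-\|x\|^2))$ with $Q$ a polynomial, and this tends to $0$ because $t^{-k}e^{-1/t}\to 0$ as $t\downarrow 0$. Since $\varrho>0$ on the open ball and $\varrho\le e^{-1}$, we get $C_\varrho\in(0,\infty)$. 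Hence $\psi$ and $\psi_\epsilon$ lie in $C_c^\infty$, with $\psi_\epsilon\ge 0$, $\int\psi_\epsilon=1$, and $\mathrm{supp}(\psi_\epsilon)\subseteq \bar B(0,\epsilon)$.

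For (a), I would differentiate under the integral sign in $\eta=\mathbf 1_A*\psi_\epsilon$, which gives $\partial^\nu\eta=\mathbf 1_A*\partial^\nu\psi_\epsilon$. This is justified by dominated convergence: $\partial^\nu\psi_\epsilon$ is bounded and compactly supported, and $\mathbf 1_A\in L^1$. Each $\partial^\nu\eta$ is continuous, so $\eta\in C^\infty$. Compact support follows from $\mathrm{supp}(\eta)\subseteq A+\bar B(0,\epsilon)=\bar B(0,R+2\epsilon)$. For (b), nonnegativity and $\int\psi_\epsilon=1$ give $0\le\eta\le1$. If $\|x\|\le R$ and $\|y\|\le\epsilon$, then $\|x-y\|\le R+\epsilon$, so $\mathbf 1_A(x-y)=1$ on the support of $\psi_\epsilon$ and $\eta(x)=1$.

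The one point needing care is the outer vanishing claim, because the radii must be checked. With $\epsilon=R/2$ the support bound is only $\bar B(0,R+2\epsilon)=\bar B(0,2R)$. So for $\|x\|\ge 2R$ and $\|y\|<\epsilon$ we have $\|x-y\|>\tfrac32R$, which means $\mathbf 1_A(x-y)=0$ for $\psi_\epsilon$-a.e.\ $y$; note that $\varrho$ vanishes on the boundary sphere $\|y\|=\epsilon$. This gives $\eta(x)=0$ for every $x\notin B(0,2R)$, including the boundary sphere $\|x\|=2R$, so the claimed statement holds exactly.

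For (c), $\nabla\eta=\mathbf 1_A*\nabla\psi_\epsilon$, and Young's inequality gives $\|\nabla\eta\|_\infty\le\|\nabla\psi_\epsilon\|_1=\epsilon^{-1}\|\nabla\psi\|_1$, which is finite since $\nabla\psi$ is continuous with compact support. Alternatively, $\nabla\eta$ is continuous and compactly supported, hence bounded. I do not expect any step to be a real obstacle; the only delicate parts are the smoothness of $\varrho$ at the unit sphere and the boundary check at $\|x\|=2R$ discussed above.
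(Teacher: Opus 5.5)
Your proposal is correct and follows essentially the same route as the paper: mollify $\mathbf 1_A$, differentiate under the integral sign, use $A+\bar B(0,\epsilon)=\bar B(0,2R)$ for the support, and use the triangle inequality for the values on $\bar B(0,R)$ and outside $B(0,2R)$. Two of your steps are slightly better than the paper's. On the sphere $\|x\|=2R$ you combine a strict inequality for $\|y\|<\epsilon$ with the vanishing of $\psi_\epsilon$ on $\|y\|=\epsilon$, which is cleaner than the paper's non-strict inequality followed by an appeal to continuity. For (c), your Young-inequality bound $M_\eta\le \epsilon^{-1}\|\nabla\psi\|_1=2\|\nabla\psi\|_1/R$ gives an explicit constant, whereas the paper only argues finiteness by compactness.
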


\begin{proof}
The function $g:\mathbb{R}\to\mathbb{R}$ defined by $g(t)=e^{-1/t}$ for $t>0$ and $g(t)=0$ for $t\leq 0$ belongs to $C^\infty(\mathbb{R})$ by Lemma~2.20 in \cite{Lee2013SmoothManifolds}. Since $\varrho(x)=g(1-\|x\|^2)$ is a composition of $g$ with a smooth polynomial, $\varrho\in C^\infty(\mathbb{R}^d)$. It is strictly positive on $B(0,1)$ and vanishes on $\|x\|\geq 1$, so $\mathrm{supp}(\varrho)=\bar{B}(0,1)$ is compact and $\varrho\in C_c^\infty(\mathbb{R}^d)$. Hence $C_\varrho\in(0,\infty)$ and $\psi$ is well-defined with $\int_{\mathbb{R}^d}\psi(x)\,dx=1$. The scaled mollifier satisfies $\psi_\epsilon\in C_c^\infty(\mathbb{R}^d)$, $\mathrm{supp}(\psi_\epsilon)=\bar{B}(0,\epsilon)$, and $\int_{\mathbb{R}^d}\psi_\epsilon(x)\,dx=1$ by the substitution $y=x/\epsilon$.

\medskip\noindent\textit{Part (a).}
For any multi-index $\nu$, $\partial^\nu\eta(x)=\int_{\mathbb{R}^d}\mathbf{1}_A(y)\,(\partial^\nu\psi_\epsilon)(x-y)\,dy$, which is continuous since $\partial^\nu\psi_\epsilon$ is smooth, bounded, and compactly supported. Hence $\eta\in C^\infty(\mathbb{R}^d)$. The support satisfies
$$\mathrm{supp}(\eta)\subseteq\mathrm{supp}(\mathbf{1}_A)+\mathrm{supp}(\psi_\epsilon)=\bar{B}\!\left(0,\tfrac{3R}{2}\right)+\bar{B}\!\left(0,\tfrac{R}{2}\right)=\bar{B}(0,2R),$$
so $\eta\in C_c^\infty(\mathbb{R}^d)$.

\medskip\noindent\textit{Part (b).}
Since $\mathbf{1}_A\geq 0$ and $\psi_\epsilon\geq 0$, we have $\eta(x)\geq 0$. Since $\mathbf{1}_A\leq 1$, we have $\eta(x)\leq\int_{\mathbb{R}^d}\psi_\epsilon(y)\,dy=1$.

For $x\in\bar{B}(0,R)$ and any $y\in\mathrm{supp}(\psi_\epsilon)$, the triangle inequality gives $\|x-y\|\leq\|x\|+\|y\|\leq R+R/2=3R/2$, so $x-y\in A$ and $\mathbf{1}_A(x-y)=1$. Hence $\eta(x)=\int_{\mathbb{R}^d}\psi_\epsilon(y)\,dy=1$.

For $x\notin B(0,2R)$ and any $y\in\mathrm{supp}(\psi_\epsilon)$, $\|x-y\|\geq\|x\|-\|y\|\geq 2R-R/2=3R/2$, so $x-y\notin A$ and $\mathbf{1}_A(x-y)=0$. Hence $\eta(x)=0$. By continuity, $\eta(x)=0$ on $\|x\|=2R$ as well.

\medskip\noindent\textit{Part (c).}
Since $\eta(x)=1$ for $\|x\|<R$ and $\eta(x)=0$ for $\|x\|>2R$, we have $\nabla\eta(x)=0$ outside the compact annulus $\bar{B}(0,2R)\setminus B(0,R)$. As $\eta\in C^\infty(\mathbb{R}^d)$, the function $x\mapsto\|\nabla\eta(x)\|$ is continuous and attains its maximum on this compact set, giving $M_\eta:=\|\nabla\eta\|_\infty=\sup_{x\in\mathbb{R}^d}\|\nabla\eta(x)\|<\infty$.
\end{proof}

\begin{lemma}\label{lemma:cutoff_function_properties}
For $\widetilde{h}_{\mathcal{X}}^{*}(x) := \bar{h}_{\mathcal{X}}^{*}(x)\eta(x)$, the following properties hold:
\begin{enumerate}
    \item[(a)] $\widetilde{h}_{\mathcal{X}}^{*} \in L_{1}(\mathbb{R}^{d})$;
    \item[(b)] $\widetilde{h}_{\mathcal{X}}^{*}$ is globally $(1 + 2RM_{\eta})$-Lipschitz continuous, namely
    $$\forall x,y \in \mathbb{R}^d,\quad|\widetilde{h}_{\mathcal{X}}^{*}(x) - \widetilde{h}_{\mathcal{X}}^{*}(y)| \le (1 + 2RM_{\eta}) \|x - y\|;$$
    \item[(c)] $\|\mathcal{F}[\widetilde{h}_{\mathcal{X}}^{*}]\|_{\infty} \leq \frac{\pi^{d/2}}{\Gamma(d/2 + 1)} (2R)^{d + 1}$.
\end{enumerate}
\end{lemma}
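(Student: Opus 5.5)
We verify the three properties of $\widetilde{h}_{\mathcal{X}}^{*}=\bar{h}_{\mathcal{X}}^{*}\eta$ directly from Lemma~\ref{lemma:global_extension} and Lemma~\ref{lemma:cutoff_function}. The starting point is the pointwise growth bound. Since $\bar{h}_{\mathcal{X}}^{*}\in\mathrm{Lip}_1(\mathbb{R}^d)$ and $\bar{h}_{\mathcal{X}}^{*}(0)=h_{\mathcal{X}}^{*}(0)=0$ (we assume $0\in\mathcal X$, as in the proof of Theorem~\ref{theorem:supersmooth_rate}), we have $|\bar{h}_{\mathcal{X}}^{*}(x)|\le\|x\|$. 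Combined with $0\le\eta\le1$ and $\eta=0$ outside $B(0,2R)$, this gives
\[
|\widetilde{h}_{\mathcal{X}}^{*}(x)|\le \|x\|\,\mathbf 1\{\|x\|<2R\}\le 2R
\qquad\text{for all } x\in\mathbb{R}^d .
\]

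For part (a), integrate this bound over the ball of radius $2R$. This yields $\|\widetilde{h}_{\mathcal{X}}^{*}\|_1\le 2R\,\mathrm{vol}(B(0,2R))=2R\,\frac{\pi^{d/2}}{\Gamma(d/2+1)}(2R)^d<\infty$.

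Part (c) follows from the same computation. Since $|e^{-i\xi^\top x}|=1$, we have $|\mathcal F[\widetilde{h}_{\mathcal{X}}^{*}](\xi)|\le\|\widetilde{h}_{\mathcal{X}}^{*}\|_1$ for every $\xi$. This gives exactly $\frac{\pi^{d/2}}{\Gamma(d/2+1)}(2R)^{d+1}$, the bound stated in (c).

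For part (b), use the product decomposition
\[
\widetilde{h}_{\mathcal{X}}^{*}(x)-\widetilde{h}_{\mathcal{X}}^{*}(y)=\eta(x)\big(\bar{h}_{\mathcal{X}}^{*}(x)-\bar{h}_{\mathcal{X}}^{*}(y)\big)+\bar{h}_{\mathcal{X}}^{*}(y)\big(\eta(x)-\eta(y)\big).
\]
The first term is at most $\|x-y\|$ because $\eta\le1$ and $\bar h^*_{\mathcal X}$ is 1-Lipschitz. The second term is the only delicate point, since $|\bar{h}_{\mathcal{X}}^{*}(y)|$ is not globally bounded by $2R$. The fix is to choose which point carries the $\bar h$ factor. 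If both $x,y\notin B(0,2R)$, the difference is $0$ and there is nothing to prove. Otherwise, by symmetry we may label the points so that $\|y\|<2R$. Then $|\bar{h}_{\mathcal{X}}^{*}(y)|\le 2R$. Also $|\eta(x)-\eta(y)|\le M_\eta\|x-y\|$ by the mean value theorem along the segment, using Lemma~\ref{lemma:cutoff_function}(c). Summing the two terms gives the Lipschitz constant $1+2RM_\eta$.

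The only step needing care is this symmetric relabeling in (b), so that the growth bound is applied at a point inside $B(0,2R)$. Everything else follows directly from the two earlier lemmas.
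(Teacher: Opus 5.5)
Your proof is correct. Parts (a) and (c) are exactly the paper's argument: the pointwise bound $|\widetilde{h}_{\mathcal{X}}^{*}(x)|\le\|x\|\le 2R$ on $\bar B(0,2R)$, integrated over that ball, and then $|\mathcal F[\cdot](\xi)|\le\|\cdot\|_1$.

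For (b) you take a different route. The paper invokes Rademacher's theorem and writes $\nabla\widetilde{h}_{\mathcal{X}}^{*}=\eta\nabla\bar{h}_{\mathcal{X}}^{*}+\bar{h}_{\mathcal{X}}^{*}\nabla\eta$ almost everywhere. It bounds this by $1+2RM_\eta$, using that $\nabla\eta$ vanishes outside $\bar B(0,2R)$; this is the counterpart of your relabeling trick. It then concludes the Lipschitz inequality by ``taking the essential supremum.''

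Your two-point product decomposition, with the $\bar{h}_{\mathcal{X}}^{*}$ factor placed at a point inside $B(0,2R)$, proves the inequality for every pair $x,y$ directly. It needs only the classical mean value theorem for the smooth $\eta$. This gains rigor at no cost. The paper's final step silently uses that an a.e.\ gradient bound on a locally Lipschitz function over a convex domain gives a pointwise Lipschitz bound. That is true, but it needs a mollification or Fubini-on-lines argument that is not spelled out.

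In exchange, the paper's formulation states the a.e.\ bound $\|\nabla\widetilde{h}_{\mathcal{X}}^{*}\|_\infty\le 1+2RM_\eta$ explicitly. That is the form reused in the proof of Theorem~\ref{theorem:sharpen_ordinary_rate}. From your version it follows at once from the easy direction of Rademacher's theorem: an $L$-Lipschitz function has a.e.\ gradient norm at most $L$.
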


\begin{proof}
\medskip\noindent\textit{Part (a).}
Since $\mathrm{supp}(\widetilde{h}_{\mathcal{X}}^{*})\subseteq\mathrm{supp}(\eta)\subseteq\bar{B}(0,2R)$, and $\bar{h}_{\mathcal{X}}^{*}$ is $1$-Lipschitz with $\bar{h}_{\mathcal{X}}^{*}(0)=0$, we have $|\widetilde{h}_{\mathcal{X}}^{*}(x)|\leq|\bar{h}_{\mathcal{X}}^{*}(x)|\cdot\eta(x)\leq\|x\|\leq 2R$ for all $x\in\mathbb{R}^d$. Therefore,
$$\int_{\mathbb{R}^d}|\widetilde{h}_{\mathcal{X}}^{*}(x)|\,dx \leq \int_{\bar{B}(0,2R)}2R\,dx = \frac{\pi^{d/2}}{\Gamma(d/2+1)}(2R)^{d+1} < \infty,$$
so $\widetilde{h}_{\mathcal{X}}^{*}\in L_1(\mathbb{R}^d)$.

\medskip\noindent\textit{Part (b).}
Since $\bar{h}_{\mathcal{X}}^{*}\in\mathrm{Lip}_1(\mathbb{R}^d)$, Rademacher's theorem gives $\|\nabla\bar{h}_{\mathcal{X}}^{*}(x)\|\leq 1$ for almost every $x\in\mathbb{R}^d$. Since $\eta\in C_c^\infty(\mathbb{R}^d)$, the product rule gives $\nabla\widetilde{h}_{\mathcal{X}}^{*}(x)=\eta(x)\nabla\bar{h}_{\mathcal{X}}^{*}(x)+\bar{h}_{\mathcal{X}}^{*}(x)\nabla\eta(x)$ almost everywhere. By the triangle inequality,
$$\|\nabla\widetilde{h}_{\mathcal{X}}^{*}(x)\| \leq |\eta(x)|\|\nabla\bar{h}_{\mathcal{X}}^{*}(x)\| + |\bar{h}_{\mathcal{X}}^{*}(x)|\|\nabla\eta(x)\| \leq 1 + 2R\,M_\eta,$$
where we used $\eta(x)\leq 1$, $\|\nabla\bar{h}_{\mathcal{X}}^{*}(x)\|\leq 1$, $|\bar{h}_{\mathcal{X}}^{*}(x)|\leq\|x\|\leq 2R$ on $\mathrm{supp}(\nabla\eta)\subseteq\bar{B}(0,2R)$, and $\|\nabla\eta\|_\infty=M_\eta$. Taking the essential supremum, $\widetilde{h}_{\mathcal{X}}^{*}$ is globally $(1+2RM_\eta)$-Lipschitz.

\medskip\noindent\textit{Part (c).}
By the triangle inequality and part (a),
$$|\mathcal{F}[\widetilde{h}_{\mathcal{X}}^{*}](\xi)| \leq \int_{\mathbb{R}^d}|\widetilde{h}_{\mathcal{X}}^{*}(x)|\,dx \leq \frac{\pi^{d/2}}{\Gamma(d/2+1)}(2R)^{d+1}.$$
\end{proof}

\begin{lemma}
\label{lemma:kernel_properties}
Let $M_K := \int_{\mathbb{R}^d} \|x\| K(x)\, dx$. Then the following properties hold:
\begin{enumerate}
    \item[(a)] $\mathcal{F}[K](0) = 1$ and $\mathrm{supp}(\mathcal{F}[K]) \subseteq \bar{B}(0, 1)$;
    \item[(b)] $M_K \leq 3\sqrt{d}(1 + 256d^2)/(8\pi)$.
\end{enumerate}
\end{lemma}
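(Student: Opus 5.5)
The lemma concerns the rescaled tensor-product Jackson kernel
$K(x)=\prod_{i=1}^d k(x_i)$, with
\[
k(u)=\frac{3}{8\pi\sqrt d}\,\mathrm{sinc}^4\!\Big(\frac{u}{4\sqrt d}\Big).
\]
The plan is to reduce both claims to one-dimensional facts about $k$.

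\textbf{Part (a).} Since $K$ is a product, $\mathcal F[K](\xi)=\prod_i \mathcal F[k](\xi_i)$, and it suffices to understand $\mathcal F[k]$.

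\begin{itemize}
\item The Fourier transform of $\mathrm{sinc}(u)$ is $\pi\mathbf 1_{[-1,1]}$.
\item Hence the transform of $\mathrm{sinc}^4(u)$ is $(2\pi)^{-3}$ times the fourfold convolution of $\pi\mathbf 1_{[-1,1]}$. This is a nonnegative piecewise-cubic B-spline supported on $[-4,4]$.
\item Rescaling by $4\sqrt d$ moves the support to $[-1/\sqrt d,\,1/\sqrt d]$.
\end{itemize}

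Therefore $\mathrm{supp}\,\mathcal F[K]\subseteq[-1/\sqrt d,1/\sqrt d]^d$. This cube lies in $\bar B(0,1)$, since its farthest corner has norm $\sqrt{d\cdot(1/d)}=1$.

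For the normalization, $\mathcal F[K](0)=\int K=\big(\int k\big)^d$. Using the classical value $\int_{\mathbb R}\mathrm{sinc}^4=2\pi/3$ and the substitution $u=4\sqrt d\,v$, we get
\[
\int k=\frac{3}{8\pi\sqrt d}\cdot 4\sqrt d\cdot\frac{2\pi}{3}=1 .
\]
This also gives $\int K=1$, which the paper already asserts. The step that requires care here is the support computation, in particular checking that the four-fold convolution has support exactly $[-4,4]$ before scaling.

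\textbf{Part (b).} Apply $\|x\|\le\sum_i|x_i|$. Because each factor integrates to one,
\[
M_K\le d\int_{\mathbb R}|u|\,k(u)\,du .
\]
Substituting $u=4\sqrt d\,v$ gives
\[
\int_{\mathbb R}|u|\,k(u)\,du=\frac{3}{8\pi\sqrt d}\,(4\sqrt d)^2\int_{\mathbb R}|v|\,\mathrm{sinc}^4(v)\,dv .
\]
To bound $\int|v|\,\mathrm{sinc}^4(v)\,dv$, split at $|v|=1$:

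\begin{itemize}
\item On $|v|\le1$, use $\mathrm{sinc}^4\le1$, which gives a contribution of at most $1$.
\item On $|v|>1$, use $\mathrm{sinc}^4(v)\le v^{-4}$, which gives $2\int_1^\infty v^{-3}\,dv=1$.
\end{itemize}

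So this integral is at most $2$, and
\[
M_K\le d\cdot\frac{3}{8\pi\sqrt d}\cdot 16d\cdot 2=\frac{12\,d^{3/2}}{\pi}.
\]

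The main obstacle is matching the paper's specific constant $3\sqrt d\,(1+256d^2)/(8\pi)$ rather than just finiteness. I expect the paper instead uses $\|x\|\le 1+\|x\|^2$ or Cauchy–Schwarz, $\|x\|\le\big(\sum_i x_i^2\big)^{1/2}$ together with Jensen, which produces a second-moment term scaling like $(4\sqrt d)^2=16d$ per coordinate. With $\int v^2\,\mathrm{sinc}^4\le$ const, the bookkeeping yields the form $1+256d^2$.

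I would first follow the crude split above, then adjust the splitting point and the elementary inequality to reproduce exactly the stated constant. If the exact constant proves awkward, any explicit finite bound suffices for the downstream uses of $M_K$.
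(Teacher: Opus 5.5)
Your proof is correct and follows the paper's route: the same B-spline support argument for part (a), and for part (b) the same reduction via $\|x\|\le\sum_i|x_i|$ and the product structure, then a split using $\mathrm{sinc}^4\le 1$ near the origin and $|\sin|\le 1$ in the tail. The only difference is that the paper splits at $|x_i|=1$ in the unscaled variable, which gives the cruder factor $1+256d^2$ in place of your $2(4\sqrt d)^2=32d$; your bound $M_K\le 12d^{3/2}/\pi$ already implies the stated one because $12d^{3/2}\le 96d^{5/2}\le\tfrac{3}{8}\sqrt d\,(1+256d^2)$ for $d\ge 1$, so the obstacle you flag does not exist and no Cauchy--Schwarz variant is needed.
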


\begin{proof}
\medskip\noindent\textit{Part (a).}
Since $\int_{\mathbb{R}^d}K(x)\,dx=1$, we have $\mathcal{F}[K](0)=1$. For the support bound, the one-dimensional Fourier transform satisfies $\mathcal{F}[\mathrm{sinc}](\xi)=\pi\mathbf{1}_{[-1,1]}(\xi)$, so
$$\mathcal{F}[\mathrm{sinc}^4](\xi) = \frac{1}{(2\pi)^3}\big(\mathcal{F}[\mathrm{sinc}]*\mathcal{F}[\mathrm{sinc}]*\mathcal{F}[\mathrm{sinc}]*\mathcal{F}[\mathrm{sinc}]\big)(\xi),$$
and $\mathrm{supp}(\mathcal{F}[\mathrm{sinc}^4])\subseteq[-4,4]$ by the Minkowski sum property of convolution supports. Setting $k(x):=\frac{3}{8\pi\sqrt{d}}\mathrm{sinc}^4\!\left(\frac{x}{4\sqrt{d}}\right)$, we have $\mathcal{F}[k](\xi)=4\sqrt{d}\,\mathcal{F}[\mathrm{sinc}^4](4\sqrt{d}\,\xi)$, giving $\mathrm{supp}(\mathcal{F}[k])\subseteq[-1/\sqrt{d},\,1/\sqrt{d}]$. Since $K(x)=\prod_{i=1}^d k(x_i)$, the Fourier transform factorizes as $\mathcal{F}[K](\xi)=\prod_{i=1}^d\mathcal{F}[k](\xi_i)$, so
$$\mathrm{supp}(\mathcal{F}[K])\subseteq\prod_{i=1}^d\!\left[-\tfrac{1}{\sqrt{d}},\tfrac{1}{\sqrt{d}}\right].$$
For any $\xi$ in this box, $\|\xi\|\leq\sqrt{d}\cdot\frac{1}{\sqrt{d}}=1$, hence $\mathrm{supp}(\mathcal{F}[K])\subseteq\bar{B}(0,1)$.

\medskip\noindent\textit{Part (b).}
Using $\|x\|\leq\sum_{j=1}^d|x|$ and the product structure of $K$,
$$M_K = \int_{\mathbb{R}^d}\|x\|\,K(x)\,dx \leq \sum_{i=1}^d\frac{3}{8\pi\sqrt{d}}\int_{-\infty}^\infty|x_i|\,\mathrm{sinc}^4\!\left(\frac{x_i}{4\sqrt{d}}\right)dx_i.$$
Splitting each integral at $|x_i|=1$ and using $|\mathrm{sinc}(x_i/4\sqrt{d})|\leq 1$ on $[0,1]$ and $|\sin(x_i/4\sqrt{d})|\leq 1$ on $[1,\infty)$,
$$\int_{-\infty}^\infty|x_i|\,\mathrm{sinc}^4\!\left(\frac{x_i}{4\sqrt{d}}\right)dx_i \leq 2\int_0^1 x_i\,dx_i + 2(4\sqrt{d})^4\int_1^\infty x_i^{-3}\,dx_i = 1+256d^2.$$
Therefore $M_K\leq\frac{3\sqrt{d}(1+256d^2)}{8\pi}<\infty$.
\end{proof}

\begin{lemma}\label{lem:integral_inequality}
In $\mathbb{R}^d$, the following identities and inequalities hold:
\begin{enumerate}
    \item[(a)] $\int_{\bar{B}(0,R)}(1+\|\xi\|^\beta)\,d\xi \leq \frac{2(2\pi)^{d/2}}{d\,\Gamma(d/2)}\max\{R^d,R^{d+\beta}\}$;
    \item[(b)] $\int_{\bar{B}(0,R)}\|\xi\|^p\,d\xi = \frac{(2\pi)^{d/2}}{(d+p)\,\Gamma(d/2)}R^{d+p}.$
\end{enumerate}
\end{lemma}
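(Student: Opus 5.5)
Both parts reduce to integrating a radial function over a ball, so the plan is to pass to polar coordinates. For a radial integrand $\phi(\|\xi\|)$ we use
\[
\int_{\bar B(0,R)}\phi(\|\xi\|)\,d\xi = |S^{d-1}|\int_0^R \phi(r)\,r^{d-1}\,dr,
\]
where $|S^{d-1}|$ is the surface area of the unit sphere. The normalizing constants in the statement are written as $(2\pi)^{d/2}/\Gamma(d/2)$. I will therefore first pin down which normalization of the sphere area the lemma intends. The standard value is $|S^{d-1}| = 2\pi^{d/2}/\Gamma(d/2)$, and since $(2\pi)^{d/2}\ge \pi^{d/2}$, any inequality proved with the true constant $2\pi^{d/2}/\Gamma(d/2)$ will a fortiori hold with $2(2\pi)^{d/2}/\Gamma(d/2)$ in place of it.

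\emph{Part (b).} This is a direct computation:
\[
\int_0^R r^{p+d-1}\,dr = \frac{R^{d+p}}{d+p},
\]
so the integral equals $|S^{d-1}|\,R^{d+p}/(d+p)$. The equality as stated depends on matching this constant to $(2\pi)^{d/2}/\Gamma(d/2)$. If the constant does not match exactly, the only place the lemma is used is as an upper bound of the form $\lesssim \Lambda^{d+p}$ in the proof of Theorem~\ref{theorem:ordinary_rate}. In that case I would record the correct constant $2\pi^{d/2}/\Gamma(d/2)$ and note that the downstream use only needs $\le$ with a $d$-dependent constant.

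\emph{Part (a).} Split the integrand as $1 + \|\xi\|^\beta$. This gives
\[
|S^{d-1}|\left(\frac{R^d}{d} + \frac{R^{d+\beta}}{d+\beta}\right) \le \frac{|S^{d-1}|}{d}\bigl(R^d + R^{d+\beta}\bigr) \le \frac{2\,|S^{d-1}|}{d}\max\{R^d, R^{d+\beta}\},
\]
using $1/(d+\beta)\le 1/d$ and $a+b\le 2\max\{a,b\}$. Combined with $|S^{d-1}| \le (2\pi)^{d/2}/\Gamma(d/2)\cdot 2$, this yields the stated bound.

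The main obstacle is purely bookkeeping: reconciling the exact constant claimed in (b) with the true sphere area. The analytic content is trivial. If the claimed equality is off by a factor of $2^{d/2}/2$, I would state (b) with the correct constant, or as an inequality, since only the order $\Lambda^{d+p}$ matters in its application.
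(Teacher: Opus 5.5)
Your route is the paper's route: polar coordinates, then $R^d/d+R^{d+\beta}/(d+\beta)\le (2/d)\max\{R^d,R^{d+\beta}\}$. Your suspicion about the constant is justified. It also exposes an error in the paper's own proof, which writes $(2\pi)^{d/2}/\Gamma(d/2)$ for the area of $S^{d-1}$. That agrees with the true value $2\pi^{d/2}/\Gamma(d/2)$ only when $d=2$, so (b) as an identity is false for every $d\neq 2$. For instance, when $d=1$ one has $\int_{-R}^R|\xi|^p\,d\xi=2R^{1+p}/(1+p)$, not $\sqrt2\,R^{1+p}/(1+p)$.

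The genuine gap is in your verification of (a). After using $a+b\le 2\max\{a,b\}$ you hold $\frac{2|S^{d-1}|}{d}\max\{R^d,R^{d+\beta}\}$, so the factor $2$ in the stated constant $\frac{2(2\pi)^{d/2}}{d\,\Gamma(d/2)}$ has already been spent. To conclude you need $|S^{d-1}|\le (2\pi)^{d/2}/\Gamma(d/2)$. The bound you invoke, $|S^{d-1}|\le 2(2\pi)^{d/2}/\Gamma(d/2)$, only gives twice the claimed bound. The comparison $(2\pi)^{d/2}\ge\pi^{d/2}$ is beside the point; the relevant one is $2\pi^{d/2}\le(2\pi)^{d/2}$, which holds if and only if $d\ge 2$. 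For $d=1$, statement (a) is actually false. With $R=\beta=1$ the left side is $\int_{-1}^1(1+|\xi|)\,d\xi=3$, while the right side is $2\sqrt{2\pi}/\Gamma(1/2)=2\sqrt2<3$. The paper's proof inherits the same defect. Likewise, your fallback of keeping the paper's constant in (b) as an upper bound works only for $d\ge 2$. The correct repair is the one you hint at: restate both parts with $|S^{d-1}|=2\pi^{d/2}/\Gamma(d/2)$. That gives (a) with constant $4\pi^{d/2}/(d\,\Gamma(d/2))$ and (b) with constant $2\pi^{d/2}/((d+p)\,\Gamma(d/2))$. This is harmless downstream, since the proof of Theorem~\ref{theorem:ordinary_rate} only uses the orders $\max\{\Lambda^d,\Lambda^{d+\beta}\}$ and $\Lambda^{d+p}$ up to $d$-dependent constants.
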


\begin{proof}
Moving to polar coordinates gives
\begin{align}
    \int_{\bar{B}(0,R)}(a+b\|\xi\|^\beta)\,d\xi 
    = \frac{(2\pi)^{d/2}}{\Gamma(d/2)}\int_0^R r^{d-1}(a+br^\beta)\,dr
    = \frac{(2\pi)^{d/2}}{\Gamma(d/2)}\left(\frac{aR^d}{d}+\frac{bR^{d+\beta}}{d+\beta}\right).
    \label{eq:ball_integral}
\end{align}
Setting $a=b=1$ yields part~(a):
\begin{equation*}
    \int_{\bar{B}(0,R)}(1+\|\xi\|^\beta)\,d\xi 
    = \frac{(2\pi)^{d/2}}{\Gamma(d/2)}\left(\frac{R^d}{d}+\frac{R^{d+\beta}}{d+\beta}\right)
    \leq \frac{2(2\pi)^{d/2}}{d\,\Gamma(d/2)}\max\{R^d,R^{d+\beta}\}.
\end{equation*}
Part~(b) follows from Equation~\eqref{eq:ball_integral} by setting $a=0$, $b=1$, and $\beta=p$.
\end{proof}

\begin{lemma}\label{lemma:gaussian_property}
    Let $X, Y \sim f(\cdot\mid 0,1)$ be independent standard Gaussian random variables. Then $\mathbb{E}[(X+iY)^n] = 0$ for all $n \geq 1$.
\end{lemma}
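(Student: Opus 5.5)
The plan is to compute $\mathbb{E}[(X+iY)^n]$ directly, using the fact that $Z:=X+iY$ is a standard complex Gaussian whose law is invariant under rotations of the complex plane. Since $X$ and $Y$ are independent standard normals, the density of $(X,Y)$ on $\mathbb{R}^2$ is $(2\pi)^{-1}e^{-(x^2+y^2)/2}$. This density is radially symmetric, so for every fixed $\phi\in\mathbb{R}$ the random variable $e^{i\phi}Z$ has the same law as $Z$. Integrability holds because $|Z|^n=(X^2+Y^2)^{n/2}$ has finite expectation for every $n$ (Gaussian moments are all finite).

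The key steps, in order, are as follows.

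\begin{enumerate}
\item Using the rotation invariance, write $\mathbb{E}[Z^n]=\mathbb{E}[(e^{i\phi}Z)^n]=e^{in\phi}\,\mathbb{E}[Z^n]$ for all $\phi$.
\item Choose $\phi=\pi/n$, which gives $e^{in\phi}=-1$. Hence $\mathbb{E}[Z^n]=-\mathbb{E}[Z^n]$, and therefore $\mathbb{E}[Z^n]=0$ for every $n\geq 1$.
\end{enumerate}

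As a cross-check, one can alternatively pass to polar coordinates $x=r\cos t$, $y=r\sin t$. Then
\[
\mathbb{E}[Z^n]=\frac{1}{2\pi}\int_0^\infty r^{n+1}e^{-r^2/2}\,dr\int_0^{2\pi}e^{int}\,dt .
\]
The radial integral is finite, and the angular integral vanishes for $n\geq1$, so the conclusion is the same.

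The only point requiring any care is the justification of rotation invariance. I would establish it by the change of variables $(x,y)\mapsto$ rotation by $\phi$, which has unit Jacobian and preserves $x^2+y^2$. Alternatively, the polar-coordinate computation avoids this point entirely, with Fubini justified by the integrability noted above. I therefore do not expect a genuine obstacle; this lemma is then what gives $\mathbb{E}[(x+i\sigma'Z)^{2k}]$ its unbiasedness property, used in the Hermite-moment argument of Theorem~\ref{thm:inequality_gaussian}.
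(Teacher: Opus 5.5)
Your proof is correct, but it takes a different route from the paper.

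The paper argues through generating functions. By independence, $\mathbb{E}[e^{t(X+iY)}]=\mathbb{E}[e^{tX}]\,\mathbb{E}[e^{itY}]=e^{t^2/2}e^{-t^2/2}=1$ for all real $t$. Expanding the left-hand side as a power series in $t$ and matching coefficients of $t^n/n!$ then gives $\mathbb{E}[(X+iY)^n]=0$ for every $n\geq 1$ at once.

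You instead use the circular symmetry of the law of $Z=X+iY$. You do this either via $\mathbb{E}[Z^n]=e^{in\phi}\,\mathbb{E}[Z^n]$ with $\phi=\pi/n$, or via the explicit polar integral, where the angular factor $\int_0^{2\pi}e^{int}\,dt$ vanishes.

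What your route buys:
\begin{itemize}
\item It handles each $n$ separately and needs only $\mathbb{E}|Z|^n<\infty$.
\item It never interchanges the expectation with an infinite series. The paper does this tacitly; the step is justified since $\mathbb{E}[e^{|t|\,|Z|}]<\infty$, but the paper does not say so.
\item It shows the conclusion holds for any circularly symmetric complex random variable with a finite $n$-th absolute moment, not only the Gaussian one.
\end{itemize}

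What the paper's route buys: it is a one-line computation that yields all moments simultaneously. It also exposes the mechanism, namely the exact cancellation between the Gaussian moment generating function and characteristic function. That same cancellation is what gives $\int H_{2k}(x;\sigma'^2)\,f(x\mid\theta,\sigma'^2)\,dx=\theta^{2k}$ in the proof of Theorem~\ref{thm:inequality_gaussian}.
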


\begin{proof}
From the moment generating function and characteristic function of the standard Gaussian,
\begin{align*}
    \mathbb{E}\big[e^{t(X+iY)}\big] = \mathbb{E}\big[e^{tX}\big]\mathbb{E}\big[e^{itY}\big] = e^{t^2/2}\cdot e^{-t^2/2} = 1,\quad t\in\mathbb{R}.
\end{align*}
Expanding the left-hand side as a power series and matching coefficients of $t^n/n!$ for each $n\geq 1$ yields the result.
\end{proof}

\begin{lemma}\label{lemma:dung_tail_gaussian_estimation}
    For $\sigma, R, L > 0$ and $k$ a positive integer, the following bounds hold:
    \begin{enumerate}
        \item[(a)] $\displaystyle\int_{x \geq R+L} e^{-(x-R)^2/2\sigma^2}\,dx \leq (\pi/2)^{1/2}\,\sigma\, e^{-L^2/2\sigma^2}$;
        \item[(b)] $\displaystyle\int_{x \geq R+L} x^{2k} e^{-(x-R)^2/2\sigma^2}\,dx \leq 3^{2k-1}\,e^{-L^2/2\sigma^2}\,(\pi/2)^{1/2}\,\sigma \cdot \big(R^{2k} + L^{2k} + \sigma^{2k}(2k-1)!!\big)$.
    \end{enumerate}
\end{lemma}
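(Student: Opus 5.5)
The plan is a direct one-dimensional computation, reducing both bounds to the standard Gaussian tail bound and binomial expansion.

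For part (a), substitute $u = x - R$. The integral becomes $\int_{L}^\infty e^{-u^2/2\sigma^2}\,du$. Then rescale by $v = u/\sigma$, so it equals $\sigma\int_{L/\sigma}^\infty e^{-v^2/2}\,dv$. Next, invoke the elementary bound $\int_a^\infty e^{-v^2/2}\,dv \le \sqrt{\pi/2}\,e^{-a^2/2}$ for $a\ge 0$. This follows by writing $v = a + w$ with $w \ge 0$ and using $e^{-(a+w)^2/2} \le e^{-a^2/2}e^{-w^2/2}$, since $aw \ge 0$. The remaining integral is $\int_0^\infty e^{-w^2/2}\,dw = \sqrt{\pi/2}$. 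This yields (a) directly.

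For part (b), use the same substitution, $x = R + L + w$ with $w\ge 0$. The same cross-term argument gives $e^{-(L+w)^2/2\sigma^2}\le e^{-L^2/2\sigma^2}e^{-w^2/2\sigma^2}$. Then bound the polynomial factor by convexity. Since $t\mapsto t^{2k}$ is convex, $(a+b+c)^{2k}\le 3^{2k-1}(a^{2k}+b^{2k}+c^{2k})$ for nonnegative $a,b,c$, which gives
\[
x^{2k} = (R+L+w)^{2k} \le 3^{2k-1}\big(R^{2k}+L^{2k}+w^{2k}\big).
\]
This leaves three integrals against the half-Gaussian weight $e^{-w^2/2\sigma^2}$ on $[0,\infty)$:
\[
\int_0^\infty e^{-w^2/2\sigma^2}\,dw = \sqrt{\pi/2}\,\sigma, \qquad \int_0^\infty w^{2k}e^{-w^2/2\sigma^2}\,dw = \tfrac12\sqrt{2\pi}\,\sigma\cdot\sigma^{2k}(2k-1)!! = \sqrt{\pi/2}\,\sigma^{2k+1}(2k-1)!!.
\]
The second identity is half the $2k$-th Gaussian moment. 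Summing the three pieces gives exactly the claimed $3^{2k-1}e^{-L^2/2\sigma^2}\sqrt{\pi/2}\,\sigma\,(R^{2k}+L^{2k}+\sigma^{2k}(2k-1)!!)$.

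There is no real obstacle here. The only point needing care is that the cross term $2Lw/2\sigma^2$ is discarded with the correct sign, which holds because $L,w\ge0$. The other point is to apply the convexity inequality to $R+L+w$ rather than to $x$ alone, so that the constant comes out as $3^{2k-1}$.
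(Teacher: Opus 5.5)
Your proposal is correct and follows the paper's proof essentially step for step. Both arguments drop the nonnegative cross term to factor out $e^{-L^2/2\sigma^2}$, bound $(R+L+w)^{2k}\le 3^{2k-1}(R^{2k}+L^{2k}+w^{2k})$ by convexity, and evaluate the resulting half-Gaussian integrals; your extra rescaling to a standard Gaussian in part (a) is only cosmetic.
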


\begin{proof}
    For part~(a), observe that for $x \geq R+L$ we have $(x-R)^2 = ((x-R-L)+L)^2\geq (x-R-L)^2 + L^2$, so
    \begin{align*}
        \int_{x \geq R+L} e^{-(x-R)^2/2\sigma^2}\,dx \leq e^{-L^2/2\sigma^2}\int_0^\infty e^{-t^2/2\sigma^2}\,dt = (\pi/2)^{1/2}\,\sigma\, e^{-L^2/2\sigma^2}.
    \end{align*}
    For part~(b), use $x^{2k} \leq 3^{2k-1}\big((x-R-L)^{2k} + R^{2k} + L^{2k}\big)$ for $x\geq R+L$. Setting $t = x-R-L$ and applying the same exponential bound as in part~(a),
    \begin{align*}
        \int_{x \geq R+L} x^{2k} e^{-(x-R)^2/2\sigma^2}\,dx
        &\leq 3^{2k-1}\,e^{-L^2/2\sigma^2}\int_0^\infty (t^{2k} + R^{2k} + L^{2k})\,e^{-t^2/2\sigma^2}\,dt \\
        &= 3^{2k-1}\,e^{-L^2/2\sigma^2}\,(\pi/2)^{1/2}\,\sigma\cdot\big(R^{2k} + L^{2k} + \sigma^{2k}(2k-1)!!\big),
    \end{align*}
    where the last equality uses $\int_0^\infty t^{2k} e^{-t^2/2\sigma^2}\,dt = (\pi/2)^{1/2}\sigma^{2k+1}(2k-1)!!$.
\end{proof}

\begin{lemma}\label{lemma:dung_total_variance_distance_estimation}
    Let $X$ and $Y$ be continuous random vectors in $\mathbb{R}^d$ with densities $p_X$ and $p_Y$, and let $u\in\mathbb{R}^d$ with $\|u\|=1$. Denoting by $X_u := u^\top X$ and $Y_u := u^\top Y$ the projections onto $u$,
    \begin{equation*}
        \|p_X - p_Y\|_1 \geq \|p_{X_u} - p_{Y_u}\|_1.
    \end{equation*}
\end{lemma}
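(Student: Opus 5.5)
The plan is to realize the $L^1$ distance as a supremum over bounded test functions and to restrict attention to test functions that depend on $x$ only through the projection $u^\top x$. This is a data-processing (pushforward) argument, so it works for any measurable projection and needs no special structure of the densities.

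First, I will make sure the projected densities exist. Since $X$ has a density $p_X$ on $\mathbb R^d$, I complete $u$ to an orthonormal basis $(u,v_2,\ldots,v_d)$ and let $O$ be the orthogonal matrix with these rows. The rotated vector $OX$ then has density $p_X(O^\top z)$, because $|\det O|=1$. Integrating out the last $d-1$ coordinates gives the density of $X_u$:
\[
p_{X_u}(t)=\int_{\mathbb R^{d-1}} p_X\big(O^\top(t,w)\big)\,dw .
\]
The same construction gives $p_{Y_u}$, with $p_Y$ in place of $p_X$. This step is routine via Fubini/Tonelli, since both densities are nonnegative and integrable.

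Next, I bound the projected distance pointwise by a fiber integral. For almost every $t$, the triangle inequality for integrals gives
\[
|p_{X_u}(t)-p_{Y_u}(t)|\le \int_{\mathbb R^{d-1}} \big|p_X(O^\top(t,w))-p_Y(O^\top(t,w))\big|\,dw .
\]
Integrating in $t$ and applying Tonelli, followed by the measure-preserving change of variables $x=O^\top(t,w)$, gives
\[
\|p_{X_u}-p_{Y_u}\|_1\le \int_{\mathbb R^d}|p_X(x)-p_Y(x)|\,dx=\|p_X-p_Y\|_1 .
\]

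As an equivalent route, and as a cross-check, I can use the variational form
\[
\|p-q\|_1=\sup_{\|\phi\|_\infty\le1}\int \phi\,(p-q).
\]
Taking $\phi=\mathrm{sign}(p_{X_u}-p_{Y_u})$ and composing it with $x\mapsto u^\top x$ produces an admissible test function on $\mathbb R^d$, which yields the same inequality.

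There is no real obstacle here. The only point needing care is justifying that $X_u$ and $Y_u$ actually have densities, so that the $L^1$ norms on the right-hand side are meaningful; the rotation-and-marginalization construction in the first step handles this.
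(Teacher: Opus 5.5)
Your argument is correct, but your main route differs from the paper's. The paper works directly at the level of laws. It writes $\tfrac12\|p_{X_u}-p_{Y_u}\|_1$ as $\sup_{A}|\mathbb P(X_u\in A)-\mathbb P(Y_u\in A)|$. It then notes that $\{X_u\in A\}=\{X\in A^u\}$, where $A^u:=\{x\in\mathbb R^d: u^\top x\in A\}$ is Borel in $\mathbb R^d$. Finally, it bounds the supremum over these preimage sets by the supremum over all Borel sets of $\mathbb R^d$.

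That is exactly your ``cross-check'', with indicators $\mathbf 1_{A^u}$ in place of $\mathrm{sign}(p_{X_u}-p_{Y_u})\circ u^\top$: it is the data-processing inequality for total variation. It is shorter, and it works for any measurable map, not only orthogonal projections.

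Your primary argument instead rotates, disintegrates along the fibers $\{u^\top x=t\}$, and applies the triangle inequality fiberwise, followed by Tonelli and the change of variables with $|\det O|=1$. It is more computational and specific to linear projections. Its advantage is that it explicitly shows $X_u$ and $Y_u$ have densities. The paper's proof uses this fact, to identify the total-variation supremum with half the $L^1$ norm, but never justifies it.
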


\begin{proof}
    For any Borel set $A\subseteq\mathbb{R}$, the preimage $A^u := \{x\in\mathbb{R}^d : u^\top x \in A\}$ is a Borel set in $\mathbb{R}^d$. Therefore,
    \begin{align*}
        \tfrac{1}{2}\|p_{X_u} - p_{Y_u}\|_1
        &= \sup_{A\in\mathscr{B}(\mathbb{R})}\big|\mathbb{P}(X_u\in A) - \mathbb{P}(Y_u\in A)\big|
        = \sup_{A\in\mathscr{B}(\mathbb{R})}\big|\mathbb{P}(X\in A^u) - \mathbb{P}(Y\in A^u)\big|\\
        &\leq \sup_{B\in\mathscr{B}(\mathbb{R}^d)}\big|\mathbb{P}(X\in B) - \mathbb{P}(Y\in B)\big|
        = \tfrac{1}{2}\|p_X - p_Y\|_1,
    \end{align*}
    which completes the proof.
\end{proof}

\begin{lemma}\label{lemma:exp_larger_polynomial}
    For any $p\geq 0$ and $\varepsilon > 0$, there exists a constant $M_{p,\varepsilon}>0$, depending only on $p$ and $\varepsilon$, such that $u^p \leq M_{p,\varepsilon}\,e^{\varepsilon u}$ for all $u > 0$.
\end{lemma}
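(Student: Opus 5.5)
The claim is that $u^p \le M_{p,\varepsilon}e^{\varepsilon u}$ for all $u>0$. The natural route is to compute the best constant explicitly, namely $M_{p,\varepsilon}:=\sup_{u>0} u^p e^{-\varepsilon u}$, and to check that this supremum is finite.

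For $p=0$ the bound is immediate with $M_{0,\varepsilon}=1$, since $e^{\varepsilon u}\ge 1$ for $u>0$.

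For $p>0$, set $g(u):=p\log u-\varepsilon u$ on $(0,\infty)$. Then $g'(u)=p/u-\varepsilon$, which vanishes only at $u^\star=p/\varepsilon$. Moreover $g'>0$ before $u^\star$ and $g'<0$ after it, so $g$ attains its global maximum there. This gives
\[
u^p e^{-\varepsilon u}\le \Big(\frac{p}{\varepsilon e}\Big)^{p}=:M_{p,\varepsilon},
\]
which is finite and depends only on $p$ and $\varepsilon$.

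An alternative is the series bound $e^{\varepsilon u}\ge (\varepsilon u)^{\lceil p\rceil}/\lceil p\rceil!$, combined with $u^p\le 1+u^{\lceil p\rceil}$. This yields a cruder constant $1+\lceil p\rceil!\,\varepsilon^{-\lceil p\rceil}$ and also works. I expect no real obstacle here; the only care needed is the separate trivial case $p=0$, because the logarithmic argument uses $p>0$ to locate the interior maximizer.
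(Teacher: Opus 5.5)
Your proof is correct, but your main route differs from the paper's. You maximize $u^p e^{-\varepsilon u}$ by calculus and obtain the smallest admissible constant, $M_{p,\varepsilon}=(p/(e\varepsilon))^p$. The paper instead sets $k=\lceil p\rceil$ and uses the Taylor bound $e^{\varepsilon u}\ge 1+(\varepsilon u)^k/k!\ge \min\{1,\varepsilon^k/k!\}\max\{1,u^k\}\ge \min\{1,\varepsilon^k/k!\}\,u^p$. This gives the cruder constant $\max\{1,k!/\varepsilon^k\}$ and is essentially your ``alternative''. Your route buys the sharp constant, though that gains nothing where the lemma is applied: it is used only with $\varepsilon=1$ and $p=(d+1)/\alpha$ or $(d+p+1)/\alpha$, where existence of some constant is all that matters. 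The series route is calculus-free and treats all $p$ with one formula. Your separate handling of $p=0$ is actually warranted. For $k=0$, the paper's intermediate inequality would read $e^{\varepsilon u}\ge 2$, which fails for small $u$. Its final constant $\max\{1,0!/\varepsilon^0\}=1$ still gives the true bound $1\le e^{\varepsilon u}$, and the lemma is never applied at $p=0$, so this is only a slip in the written argument. Your version of the series argument avoids it by dropping the leading $1$ and using $u^p\le 1+u^k$.
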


\begin{proof}
    Let $k := \lceil p\rceil$. From the Taylor expansion of $e^{\varepsilon u}$, we have
    \begin{equation*}
        e^{\varepsilon u} \geq 1 + \frac{\varepsilon^k u^k}{k!} \geq \min\!\left(1,\frac{\varepsilon^k}{k!}\right)\max\{1, u^k\} \geq \min\!\left(1,\frac{\varepsilon^k}{k!}\right) u^p.
    \end{equation*}
    Dividing both sides by $\min\!\left(1,\frac{\varepsilon^k}{k!}\right)$ yields
    \begin{equation*}
        u^p \leq \max\!\left(1,\frac{k!}{\varepsilon^k}\right) e^{\varepsilon u}.
    \end{equation*}
    The result follows by setting $M_{p,\varepsilon} := \max\!\left(1,\frac{k!}{\varepsilon^k}\right)$.
\end{proof}

\begin{lemma}
\label{lemma:dung_ascoli_for_non_compact_measure}
Let $\mu$ and $\nu$ be two probability measures on $\mathbb{R}^d$ with finite first moments, i.e.,
\begin{equation*}
    \int_{\mathbb{R}^d}\|x\|\,\mu(dx) < \infty, \quad \int_{\mathbb{R}^d}\|x\|\,\nu(dx) < \infty.
\end{equation*}
Then there exists a function $h^* \in \mathrm{Lip}_1(\mathbb{R}^d)$ with $h^*(0) = 0$ that attains the Kantorovich-Rubinstein dual problem:
\begin{equation*}
\sup_{h \in \mathrm{Lip}_1(\mathbb{R}^d)} \left( \int_{\mathbb{R}^d} h(x)\, \mu(dx) - \int_{\mathbb{R}^d} h(x)\, \nu(dx) \right) = \int_{\mathbb{R}^d} h^*(x)\, \mu(dx) - \int_{\mathbb{R}^d} h^*(x)\, \nu(dx).
\end{equation*}
\end{lemma}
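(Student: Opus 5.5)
This is existence of a maximizer in the Kantorovich--Rubinstein dual on all of $\mathbb R^d$. I will obtain it by a compactness argument applied to a maximizing sequence.

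Let $s$ denote the supremum. Finiteness follows because any $h\in\mathrm{Lip}_1(\mathbb R^d)$ normalized by $h(0)=0$ satisfies $|h(x)|\le\|x\|$. Hence both integrals exist and
\[
|\textstyle\int h\,d\mu-\int h\,d\nu|\le \int\|x\|\,\mu(dx)+\int\|x\|\,\nu(dx)<\infty .
\]
Subtracting a constant from $h$ does not change the objective, because $\mu$ and $\nu$ are probability measures. So we may restrict the supremum to the class $\mathcal L_0:=\{h\in\mathrm{Lip}_1(\mathbb R^d): h(0)=0\}$ without changing its value.

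Now take a maximizing sequence $(h_n)\subseteq\mathcal L_0$ with objective values converging to $s$. On each closed ball $\bar B(0,k)$ the family $\mathcal L_0$ is equicontinuous, being $1$-Lipschitz, and uniformly bounded by $k$. By Arzel\`a--Ascoli on each $\bar B(0,k)$ and a diagonal extraction over $k\in\mathbb N$, I obtain a subsequence converging locally uniformly on $\mathbb R^d$ to some $h^*$. Pointwise limits preserve the inequalities $|h(x)-h(y)|\le\|x-y\|$ and $h(0)=0$, so $h^*\in\mathcal L_0$.

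The main step is passing the limit inside the integrals over the non-compact space. The domination $|h_n(x)|\le\|x\|$ holds uniformly in $n$, and $\|x\|$ is integrable under both $\mu$ and $\nu$ by the first-moment assumption. Pointwise convergence $h_n\to h^*$ then lets dominated convergence give $\int h_n\,d\mu\to\int h^*\,d\mu$, and the same for $\nu$. Therefore the objective at $h^*$ equals $\lim_n(\int h_n\,d\mu-\int h_n\,d\nu)=s$, so $h^*$ attains the supremum.

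The only delicate points are the diagonal argument and the uniform integrable envelope. The envelope is exactly what the normalization $h(0)=0$ together with finite first moments provides, so I do not expect a genuine obstacle.
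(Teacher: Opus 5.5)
Your proposal is correct and follows essentially the same route as the paper's proof: normalize a maximizing sequence so that $h_n(0)=0$, extract a locally uniformly convergent subsequence via Arzel\`a--Ascoli on the balls $\bar B(0,k)$ with a diagonal argument, and pass to the limit in both integrals by dominated convergence with the envelope $\|x\|$. Your explicit check that the supremum is finite (it is bounded by the sum of the first moments) is a small but welcome addition, since the paper skips this and simply identifies the limit with $W_1(\mu,\nu)$.
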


\begin{proof}
Let $(h_n)_{n\in\mathbb N}$ be a maximizing sequence for the Kantorovich-Rubinstein dual problem. Since adding a constant to any $h_n$ does not affect $\int h_n\, d\mu - \int h_n\, d\nu$ (as $\mu$ and $\nu$ are both probability measures), we may assume without loss of generality that $h_n(0) = 0$ for all $n$. Consequently, $|h_n(x)| = |h_n(x) - h_n(0)| \leq \|x\|$ for all $x \in \mathbb{R}^d$, so the family $(h_n)_{n\in\mathbb N}$ is uniformly bounded and equicontinuous on every compact subset of $\mathbb{R}^d$.

We now apply a diagonal argument with the Arzel\`a-Ascoli theorem. Applying Arzel\`a-Ascoli on $\bar{B}(0,1)$, we extract a subsequence $(h^1_n)_{n\in\mathbb N} \subset (h_n)_{n\in\mathbb N}$ converging uniformly on $\bar{B}(0,1)$. Iteratively, for each $k \geq 1$, by Arzel\`a-Ascoli on $\bar{B}(0,k+1)$, we extract a subsequence $(h^{k+1}_n)_{n\in\mathbb N} \subset (h^k_n)_{n\in\mathbb N}$ converging uniformly on $\bar{B}(0,k+1)$. The diagonal sequence $(h^n_n)_{n\in\mathbb N}$ then converges uniformly on every compact subset of $\mathbb{R}^d$ to some pointwise limit $h^*$. Since uniform-on-compacts limits of $1$-Lipschitz functions are $1$-Lipschitz, $h^* \in \mathrm{Lip}_1(\mathbb{R}^d)$ and clearly $h^*(0) = 0$.

It remains to show that $h^*$ attains the supremum. We have $h^n_n(x) \to h^*(x)$ pointwise, and the bound $|h^n_n(x)| \leq \|x\|$ holds uniformly in $n$. Since
\begin{equation*}
    \int_{\mathbb{R}^d} \|x\|\, \mu(dx) + \int_{\mathbb{R}^d} \|x\|\, \nu(dx) < \infty,
\end{equation*}
the Dominated Convergence Theorem (applied separately to $\mu$ and $\nu$) yields
\begin{equation*}
    \int_{\mathbb{R}^d} h^n_n(x)\, \mu(dx) - \int_{\mathbb{R}^d} h^n_n(x)\, \nu(dx) \;\longrightarrow\; \int_{\mathbb{R}^d} h^*(x)\, \mu(dx) - \int_{\mathbb{R}^d} h^*(x)\, \nu(dx).
\end{equation*}
Since $(h_n)_{n\in\mathbb N}$ is a maximizing sequence, the left-hand side converges to $W_1(\mu,\nu)$, so $h^*$ attains the supremum. This completes the proof.
\end{proof}

\begin{lemma}
\label{lemma:dung_a_small_estimation_from_xlogx}
Let $x, y, M > 0$ satisfy $x \leq M y \log y$ and $x \geq e^{1/M}$. Then $y \geq x/(M \log x)$.
\end{lemma}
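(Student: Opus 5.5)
We need to prove: if $x\le My\log y$ and $x\ge e^{1/M}$, then $y\ge x/(M\log x)$.

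The plan is to argue by contradiction, using the monotonicity of $t\mapsto t\log t$ together with the lower bound on $x$.

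Suppose instead that $y < x/(M\log x)$. Since $x\ge e^{1/M}$, we have $M\log x\ge 1$, so $x/(M\log x)\le x$ and hence $y<x$. We must also dispose of the regime where $y\log y$ is nonpositive. If $y\le 1$, then $My\log y\le 0<x$, which contradicts the hypothesis $x\le My\log y$. So we may assume $y>1$.

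On $(1,\infty)$ the map $t\mapsto t\log t$ is increasing. Applying it to $y<x/(M\log x)$ gives
\[
My\log y < M\cdot\frac{x}{M\log x}\,\log\!\Big(\frac{x}{M\log x}\Big)=\frac{x}{\log x}\big(\log x-\log(M\log x)\big).
\]
Since $M\log x\ge 1$, we have $\log(M\log x)\ge 0$. The right-hand side is therefore at most $x$, so $My\log y<x$, contradicting the hypothesis. Hence $y\ge x/(M\log x)$.

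One subtlety deserves attention. If $x/(M\log x)\le 1$, then the case $y>1$ already gives $y>1\ge x/(M\log x)$ directly, so monotonicity only needs to be invoked when $x/(M\log x)>1$. In that case both $y$ and $x/(M\log x)$ lie in $(1,\infty)$, where $t\log t$ is increasing, and the argument above applies.

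The main obstacle is exactly this bookkeeping of signs: making sure $\log(M\log x)\ge0$ and that monotonicity is applied only on $(1,\infty)$. Both points follow from the assumption $x\ge e^{1/M}$.
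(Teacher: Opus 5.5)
Your proof is correct and follows essentially the same route as the paper: show $y>1$, use $M\log x\ge 1$ to get $\tfrac{x}{M\log x}\log\bigl(\tfrac{x}{M\log x}\bigr)\le \tfrac{x}{M}\le y\log y$, and invoke monotonicity of $t\mapsto t\log t$ on $(1,\infty)$. The only difference is that the paper argues directly rather than by contradiction; under your contradiction hypothesis, $1<y<x/(M\log x)$ already forces $x/(M\log x)>1$, so your separate ``subtlety'' case is redundant but harmless.
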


\begin{proof}
Define $f(t) := t \log t$ for $t > 0$. Since $f'(t) = \log t + 1$, the function $f$ is decreasing on $(0, e^{-1})$ and increasing on $(e^{-1}, \infty)$, with global minimum $f(e^{-1}) = -e^{-1}$. From $x > 0$ and $x \leq M y \log y$, we have $y \log y > 0$, hence $y > 1$.

The hypothesis $x \geq e^{1/M}$ gives $\log x \geq 1/M$, hence $M \log x \geq 1$, which implies $x/(M \log x) \leq x$. Therefore
\begin{equation*}
    f\!\left(\frac{x}{M \log x}\right) = \frac{x}{M \log x} \log\!\left(\frac{x}{M \log x}\right) \leq \frac{x}{M \log x} \log x = \frac{x}{M}.
\end{equation*}
Combined with $x/M \leq y \log y = f(y)$, this gives $f(x/(M \log x)) \leq f(y)$.

We conclude with a brief case analysis. If $x/(M \log x) \leq 1$, then since $y > 1$, the conclusion $y \geq x/(M \log x)$ holds trivially. Otherwise, $x/(M \log x) > 1$, and both $x/(M \log x)$ and $y$ lie in $(1, \infty) \subset (e^{-1}, \infty)$, where $f$ is strictly increasing; hence $f(x/(M \log x)) \leq f(y)$ implies $x/(M \log x) \leq y$.
\end{proof}

\bibliographystyle{apalike} 
\bibliography{arxiv}       

\begin{thebibliography}{}

\bibitem[Ascolani et~al., 2023]{ascolani2023clustering}
Ascolani, F., Lijoi, A., Rebaudo, G., and Zanella, G. (2023).
\newblock {Clustering consistency with Dirichlet process mixtures}.
\newblock {\em Biometrika}, 110(2):551--558.

\bibitem[Bariletto et~al., 2025]{bariletto2025posterior}
Bariletto, N., Flores, B., and Walker, S.~G. (2025).
\newblock {Posterior Consistency in Parametric Models via a Tighter Notion of Identifiability}.
\newblock {\em arXiv preprint arXiv:2504.11360}.

\bibitem[Barron et~al., 1999]{Barron-Shervish-Wasserman-99}
Barron, A., Schervish, M., and Wasserman, L. (1999).
\newblock The consistency of posterior distributions in nonparametric problems.
\newblock {\em The Annals of Statistics}, 27:536--561.

\bibitem[Butzer and Nessel, 1971]{butzer1971fourier}
Butzer, P.~L. and Nessel, R.~J. (1971).
\newblock {\em Fourier analysis and approximation, Vol. 1}, volume~7.
\newblock Birkh{\"a}user Basel.

\bibitem[Caillerie et~al., 2011]{Caillerie-etal-11}
Caillerie, C., Chazal, F., Dedecker, J., and Michel, B. (2011).
\newblock Deconvolution for the {W}asserstein metric and geometric inference.
\newblock {\em Electronic Journal of Statistics}, 5:1394--1423.

\bibitem[Canale and De~Blasi, 2017]{canale2017posterior}
Canale, A. and De~Blasi, P. (2017).
\newblock Posterior asymptotics of nonparametric location-scale mixtures for multivariate density estimation.
\newblock {\em Bernoulli}, 23(1):379--404.

\bibitem[Castillo, 2024]{castillo2024bnp}
Castillo, I. (2024).
\newblock {\em {Bayesian Nonparametric Statistics}}.
\newblock École d’Été de Probabilités de Saint-Flour LI - 2023. Springer Nature.

\bibitem[Catalano and Lavenant, 2025]{catalano2025measures}
Catalano, M. and Lavenant, H. (2025).
\newblock Measures of dependence based on {W}asserstein distances.
\newblock {\em arXiv preprint arXiv:2510.06034}.

\bibitem[Catalano et~al., 2024]{catalano2024wasserstein}
Catalano, M., Lavenant, H., Lijoi, A., and Pr{\"u}nster, I. (2024).
\newblock A {W}asserstein index of dependence for random measures.
\newblock {\em Journal of the American Statistical Association}, 119(547):2396--2406.

\bibitem[Catalano et~al., 2021]{catalano2021measuring}
Catalano, M., Lijoi, A., and Pr{\"u}nster, I. (2021).
\newblock {Measuring dependence in the Wasserstein distance for Bayesian nonparametric models}.
\newblock {\em The Annals of Statistics}, 49(5):2916--2947.

\bibitem[Chae and Walker, 2017]{chae2017}
Chae, M. and Walker, S.~G. (2017).
\newblock {A novel approach to Bayesian consistency}.
\newblock {\em Electronic Journal of Statistics}, 11(2):4723--4745.

\bibitem[De~Blasi et~al., 2015]{deblasi2015gibbs}
De~Blasi, P., Favaro, S., Lijoi, A., Mena, R.~H., Pr{\"u}nster, I., and Ruggiero, M. (2015).
\newblock Are {Gibbs}-type priors the most natural generalization of the {Dirichlet} process?
\newblock {\em IEEE Transactions on Pattern Analysis and Machine Intelligence}, 37(2):212--229.

\bibitem[Dedecker et~al., 2015]{dedecker2015improved}
Dedecker, J., Fischer, A., and Michel, B. (2015).
\newblock Improved rates for {W}asserstein deconvolution with ordinary smooth error in dimension one.
\newblock {\em Electronic Journal of Statistics}, 9(1):234--265.

\bibitem[Dedecker and Michel, 2013]{dedecker2013minimax}
Dedecker, J. and Michel, B. (2013).
\newblock Minimax rates of convergence for {W}asserstein deconvolution with supersmooth errors in any dimension.
\newblock {\em Journal of Multivariate Analysis}, 122:278--291.

\bibitem[DeVore and Lorentz, 1993]{devore1993constructive}
DeVore, R.~A. and Lorentz, G.~G. (1993).
\newblock {\em Constructive approximation}, volume 303.
\newblock Springer Science \& Business Media.

\bibitem[Escobar and West, 1995]{escobar1995bayesian}
Escobar, M.~D. and West, M. (1995).
\newblock Bayesian density estimation and inference using mixtures.
\newblock {\em Journal of the American Statistical Association}, 90(430):577--588.

\bibitem[Ferguson, 1973]{Ferguson}
Ferguson, T. (1973).
\newblock A {B}ayesian analysis of some nonparametric problems.
\newblock {\em The Annals of Statistics}, 1:209--230.

\bibitem[Ferguson, 1983]{ferguson1983bayesian}
Ferguson, T.~S. (1983).
\newblock Bayesian density estimation by mixtures of normal distributions.
\newblock In {\em Recent advances in statistics}, pages 287--302. Elsevier.

\bibitem[Folland, 1999]{folland1999real}
Folland, G.~B. (1999).
\newblock {\em Real analysis: modern techniques and their applications}.
\newblock John Wiley \& Sons.

\bibitem[Fraley and Raftery, 2002]{fraley2002model}
Fraley, C. and Raftery, A.~E. (2002).
\newblock Model-based clustering, discriminant analysis, and density estimation.
\newblock {\em Journal of the American Statistical Association}, 97(458):611--631.

\bibitem[Franzolini et~al., 2025]{franzolini2025multivariate}
Franzolini, B., Lijoi, A., Pr{\"u}nster, I., and Rebaudo, G. (2025).
\newblock Multivariate species sampling models.
\newblock {\em arXiv preprint arXiv:2503.24004}.

\bibitem[Gao and van~der Vaart, 2016]{Gao2016Posterior}
Gao, F. and van~der Vaart, A. (2016).
\newblock Posterior contraction rates for deconvolution of {Dirichlet-Laplace} mixtures.
\newblock {\em Electronic Journal of Statistics}, 10(1):608--627.

\bibitem[Genovese and Wasserman, 2000]{genovese2000rates}
Genovese, C.~R. and Wasserman, L. (2000).
\newblock {Rates of convergence for the Gaussian mixture sieve}.
\newblock {\em The Annals of Statistics}, 28(4):1105--1127.

\bibitem[Ghosal et~al., 1999]{Ghosal-1999}
Ghosal, S., Ghosh, J.~K., and Ramamoorthi, R.~V. (1999).
\newblock Posterior consistency of {D}irichlet mixtures in density estimation.
\newblock {\em The Annals of Statistics}, 27:143--158.

\bibitem[Ghosal et~al., 2000]{Ghosal-2000}
Ghosal, S., Ghosh, J.~K., and van~der Vaart, A. (2000).
\newblock Convergence rates of posterior distributions.
\newblock {\em The Annals of Statistics}, 28:500--531.

\bibitem[Ghosal and van~der Vaart, 2007a]{ghosal2007noniid}
Ghosal, S. and van~der Vaart, A. (2007a).
\newblock {Convergence rates of posterior distributions for noniid observations}.
\newblock {\em The Annals of Statistics}, 35(1):192 -- 223.

\bibitem[Ghosal and van~der Vaart, 2007b]{Ghosal-2007}
Ghosal, S. and van~der Vaart, A. (2007b).
\newblock Posterior convergence rates of {D}irichlet mixtures at smooth densities.
\newblock {\em The Annals of Statistics}, 35:697--723.

\bibitem[Ghosal and Van~der Vaart, 2017]{ghosal2017fundamentals}
Ghosal, S. and Van~der Vaart, A. (2017).
\newblock {\em {Fundamentals of nonparametric Bayesian inference}}, volume~44.
\newblock Cambridge University Press.

\bibitem[Ghosal and van~der Vaart, 2001]{ghosal2001entropies}
Ghosal, S. and van~der Vaart, A.~W. (2001).
\newblock Entropies and rates of convergence for maximum likelihood and {B}ayes estimation for mixtures of normal densities.
\newblock {\em The Annals of Statistics}, 29(5):1233--1263.

\bibitem[Gin{\'e} and Nickl, 2011]{gine2011rates}
Gin{\'e}, E. and Nickl, R. (2011).
\newblock {Rates of contraction for posterior distributions in $L^r$-metrics, $1 \leq r \leq\infty$}.
\newblock {\em The Annals of Statistics}, 39(6):2883 -- 2911.

\bibitem[Gnedin and Pitman, 2006]{GnedinPitman2006}
Gnedin, A. and Pitman, J. (2006).
\newblock {Exchangeable Gibbs partitions and Stirling triangles}.
\newblock {\em Journal of Mathematical Sciences}, 138(3):5674--5685.

\bibitem[Guha et~al., 2021]{guha2021Bernoulli}
Guha, A., Ho, N., and Nguyen, X. (2021).
\newblock On posterior contraction of parameters and interpretability in {{Bayesian}} mixture modeling.
\newblock {\em Bernoulli}, 27(4):2159--2188.

\bibitem[Heinonen, 2001]{heinonen2001lectures}
Heinonen, J. (2001).
\newblock {\em Lectures on analysis on metric spaces}.
\newblock Springer Science \& Business Media.

\bibitem[Ho and Nguyen, 2016a]{Ho-Nguyen-Ann-16}
Ho, N. and Nguyen, X. (2016a).
\newblock Convergence rates of parameter estimation for some weakly identifiable finite mixtures.
\newblock {\em The Annals of Statistics}, 44:2726--2755.

\bibitem[Ho and Nguyen, 2016b]{Ho-Nguyen-EJS-16}
Ho, N. and Nguyen, X. (2016b).
\newblock On strong identifiability and convergence rates of parameter estimation in finite mixtures.
\newblock {\em Electronic Journal of Statistics}, 10:271--307.

\bibitem[Kingman, 1978]{kingman1978representation}
Kingman, J.~F. (1978).
\newblock The representation of partition structures.
\newblock {\em Journal of the London Mathematical Society}, 2(2):374--380.

\bibitem[Kingman, 1967]{kingman1967completely}
Kingman, J. F.~C. (1967).
\newblock Completely random measures.
\newblock {\em Pacific Journal of Mathematics}, 21(1):59--78.

\bibitem[Lee, 2013]{Lee2013SmoothManifolds}
Lee, J.~M. (2013).
\newblock {\em Introduction to Smooth Manifolds}, volume 218 of {\em Graduate Texts in Mathematics}.
\newblock Springer, New York, 2 edition.

\bibitem[Lijoi et~al., 2005a]{lijoi2005hierarchical}
Lijoi, A., Mena, R.~H., and Pr{\"u}nster, I. (2005a).
\newblock Hierarchical mixture modeling with normalized inverse-{Gaussian} priors.
\newblock {\em Journal of the American Statistical Association}, 100(472):1278--1291.

\bibitem[Lijoi et~al., 2014]{lijoi2014bayesian}
Lijoi, A., Nipoti, B., and Pr{\"u}nster, I. (2014).
\newblock Bayesian inference with dependent normalized completely random measures.
\newblock {\em Bernoulli}, pages 1260--1291.

\bibitem[Lijoi and Pr{\"u}nster, 2010]{lijoi2010models}
Lijoi, A. and Pr{\"u}nster, I. (2010).
\newblock Models beyond the {Dirichlet} process.
\newblock In Hjort, N.~L., Holmes, C., M{\"u}ller, P., and Walker, S.~G., editors, {\em Bayesian Nonparametrics}, pages 80--136. Cambridge University Press.

\bibitem[Lijoi et~al., 2005b]{lijoi2005consistency}
Lijoi, A., Pr{\"u}nster, I., and Walker, S.~G. (2005b).
\newblock On consistency of nonparametric normal mixtures for {B}ayesian density estimation.
\newblock {\em Journal of the American Statistical Association}, 100(472):1292--1296.

\bibitem[Lo, 1984]{lo1984class}
Lo, A.~Y. (1984).
\newblock {On a Class of Bayesian Nonparametric Estimates: I. Density Estimates}.
\newblock {\em The Annals of Statistics}, 12(1):351--357.

\bibitem[MacEachern and M{\"u}ller, 1998]{maceachern1998estimating}
MacEachern, S.~N. and M{\"u}ller, P. (1998).
\newblock Estimating mixture of {D}irichlet process models.
\newblock {\em Journal of Computational and Graphical Statistics}, 7(2):223--238.

\bibitem[Majumdar, 1992]{majumdar1992topological}
Majumdar, S. (1992).
\newblock On topological support of {D}irichlet prior.
\newblock {\em Statistics \& Probability Letters}, 15(5):381--384.

\bibitem[McLachlan and Peel, 2000]{mclachlan2000finite}
McLachlan, G.~J. and Peel, D. (2000).
\newblock {\em Finite Mixture Models}.
\newblock John Wiley \& Sons.

\bibitem[McShane, 1934]{McShane1934}
McShane, E.~J. (1934).
\newblock Extension of range of functions.
\newblock {\em Bulletin of the American Mathematical Society}, 40:837--842.

\bibitem[Miller and Harrison, 2013]{miller2013simple}
Miller, J.~W. and Harrison, M.~T. (2013).
\newblock A simple example of {D}irichlet process mixture inconsistency for the number of components.
\newblock In {\em Advances in Neural Information Processing Systems}, volume~26.

\bibitem[Miller and Harrison, 2014]{miller2014inconsistency}
Miller, J.~W. and Harrison, M.~T. (2014).
\newblock Inconsistency of {P}itman--{Y}or process mixtures for the number of components.
\newblock {\em Journal of Machine Learning Research}, 15:3333--3370.

\bibitem[Neal, 2000]{neal2000markov}
Neal, R.~M. (2000).
\newblock {Markov chain sampling methods for Dirichlet process mixture models}.
\newblock {\em Journal of Computational and Graphical Statistics}, 9(2):249--265.

\bibitem[Nguyen, 2013]{Nguyen-13}
Nguyen, X. (2013).
\newblock Convergence of latent mixing measures in finite and infinite mixture models.
\newblock {\em The Annals of Statistics}, 4(1):370--400.

\bibitem[Nieto-Barajas et~al., 2004]{Nieto-Barajas:2004:NRM}
Nieto-Barajas, L.~E., Pr{\"u}nster, I., and Walker, S.~G. (2004).
\newblock Normalized random measures driven by increasing additive processes.
\newblock {\em The Annals of Statistics}, 32(6):2343--2360.

\bibitem[Perman et~al., 1992]{perman1992size}
Perman, M., Pitman, J., and Yor, M. (1992).
\newblock Size-biased sampling of {Poisson} point processes and excursions.
\newblock {\em Probability Theory and Related Fields}, 92(1):21--39.

\bibitem[Petralia et~al., 2012]{petralia2012repulsive}
Petralia, F., Rao, V., and Dunson, D. (2012).
\newblock Repulsive mixtures.
\newblock {\em Advances in neural information processing systems}, 25.

\bibitem[Pitman, 1996]{pitman1996some}
Pitman, J. (1996).
\newblock Some developments of the {Blackwell-MacQueen} urn scheme.
\newblock In {\em Statistics, Probability and Game Theory: Papers in Honor of David Blackwell}, volume~30, pages 245--267. Institute of Mathematical Statistics.

\bibitem[Pitman and Yor, 1997]{pitman1997two}
Pitman, J. and Yor, M. (1997).
\newblock The two-parameter {Poisson-Dirichlet} distribution derived from a stable subordinator.
\newblock {\em The Annals of Probability}, 25(2):855--900.

\bibitem[Regazzini et~al., 2003]{regazzini2003distributional}
Regazzini, E., Lijoi, A., and Pr{\"u}nster, I. (2003).
\newblock Distributional results for means of normalized random measures with independent increments.
\newblock {\em The Annals of Statistics}, 31(2):560--585.

\bibitem[Rodriguez et~al., 2008]{rodriguez2008nested}
Rodriguez, A., Dunson, D.~B., and Gelfand, A.~E. (2008).
\newblock The nested {D}irichlet process.
\newblock {\em Journal of the American Statistical Association}, 103(483):1131--1154.

\bibitem[Rousseau and Mengersen, 2011]{rousseau2011asymptotic}
Rousseau, J. and Mengersen, K. (2011).
\newblock Asymptotic behaviour of the posterior distribution in overfitted mixture models.
\newblock {\em Journal of the Royal Statistical Society Series B: Statistical Methodology}, 73(5):689--710.

\bibitem[Rousseau and Scricciolo, 2024]{rousseau2024wasserstein}
Rousseau, J. and Scricciolo, C. (2024).
\newblock {Wasserstein convergence in Bayesian and frequentist deconvolution models}.
\newblock {\em The Annals of Statistics}, 52(4):1691--1715.

\bibitem[Rudin, 1991]{rudin1991functional}
Rudin, W. (1991).
\newblock {\em Functional Analysis}.
\newblock McGraw-Hill, New York, second edition.

\bibitem[Schwartz, 1966]{schwartz1966theorie}
Schwartz, L. (1966).
\newblock {\em Th{\'e}orie des distributions}.
\newblock Hermann, Paris.

\bibitem[Scricciolo, 2011]{scricciolo2011posterior}
Scricciolo, C. (2011).
\newblock {Posterior rates of convergence for Dirichlet mixtures of exponential power densities}.
\newblock {\em Electronic Journal of Statistics}, 5:270--308.

\bibitem[Scricciolo, 2014]{scricciolo2014adaptive}
Scricciolo, C. (2014).
\newblock {Adaptive Bayesian density estimation in Lp-metrics with Pitman-Yor or normalized inverse-Gaussian process kernel mixtures}.
\newblock {\em Bayesian Analysis}, 9(2):475--520.

\bibitem[Scricciolo, 2018]{scricciolo2018bayes}
Scricciolo, C. (2018).
\newblock {Bayes and maximum likelihood for $L1$-Wasserstein deconvolution of {L}aplace mixtures}.
\newblock {\em Statistical Methods \& Applications}, 27(2):333--362.

\bibitem[Sethuraman, 1994]{Sethuraman}
Sethuraman, J. (1994).
\newblock A constructive definition of {D}irichlet priors.
\newblock {\em Statistica Sinica}, 4:639--650.

\bibitem[Shen et~al., 2013]{shen2013adaptive}
Shen, W., Tokdar, S.~T., and Ghosal, S. (2013).
\newblock Adaptive {B}ayesian multivariate density estimation with {D}irichlet mixtures.
\newblock {\em Biometrika}, 100(3):623--640.

\bibitem[Shen and Wasserman, 2001]{shen2001rates}
Shen, X. and Wasserman, L. (2001).
\newblock Rates of convergence of posterior distributions.
\newblock {\em The Annals of Statistics}, 29(3):687--714.

\bibitem[Teh et~al., 2006]{Teh-et-al06}
Teh, Y., Jordan, M., Beal, M., and Blei, D. (2006).
\newblock Hierarchical {D}irichlet processes.
\newblock {\em Journal of the American Statistical Association}, 101:1566--1581.

\bibitem[Tokdar, 2006]{tokdar2006posterior}
Tokdar, S.~T. (2006).
\newblock Posterior consistency of {D}irichlet location-scale mixture of normals in density estimation and regression.
\newblock {\em Sankhy{\=a}: The Indian Journal of Statistics}, 67(4):90--110.

\bibitem[Villani, 2003]{Villani-03}
Villani, C. (2003).
\newblock {\em Topics in Optimal Transportation}.
\newblock American Mathematical Society.

\bibitem[Villani, 2008]{Villani-09}
Villani, C. (2008).
\newblock {\em Optimal transport: Old and New}.
\newblock Springer.

\bibitem[Wade, 2023]{wade2023bayesian}
Wade, S. (2023).
\newblock Bayesian cluster analysis.
\newblock {\em Philosophical Transactions of the Royal Society A}, 381(2247):20220149.

\bibitem[Walker, 2004]{walker2004squarerootsum}
Walker, S.~G. (2004).
\newblock {New approaches to Bayesian consistency}.
\newblock {\em The Annals of Statistics}, 32(5):2028 -- 2043.

\bibitem[Walker, 2007]{walker2007sampling}
Walker, S.~G. (2007).
\newblock Sampling the {D}irichlet mixture model with slices.
\newblock {\em Communications in Statistics—Simulation and Computation{\textregistered}}, 36(1):45--54.

\bibitem[Walker and Hjort, 2001]{walker2001}
Walker, S.~G. and Hjort, N.~L. (2001).
\newblock On {B}ayesian consistency.
\newblock {\em Journal of the Royal Statistical Society, Series B}, 63:811--821.

\bibitem[Walker et~al., 2007]{walker2007rates}
Walker, S.~G., Lijoi, A., and Pr{\"u}nster, I. (2007).
\newblock {On rates of convergence for posterior distributions in infinite-dimensional models}.
\newblock {\em The Annals of Statistics}, 35(2):738--746.

\bibitem[Whitney, 1934]{Whitney1934}
Whitney, H. (1934).
\newblock Analytic extensions of differentiable functions defined in closed sets.
\newblock {\em Transactions of the American Mathematical Society}, 36:63.

\bibitem[Wu and Ghosal, 2010]{wu2010l1}
Wu, Y. and Ghosal, S. (2010).
\newblock {The $L_1$-consistency of Dirichlet mixtures in multivariate Bayesian density estimation}.
\newblock {\em Journal of Multivariate Analysis}, 101(10):2411--2419.

\bibitem[Xu et~al., 2016]{xu2016bayesian}
Xu, Y., M{\"u}ller, P., and Telesca, D. (2016).
\newblock Bayesian inference for latent biologic structure with determinantal point processes (dpp).
\newblock {\em Biometrics}, 72(3):955--964.

\end{thebibliography}

\end{document}